% Enumeration conventions:
%   Properties of defined notion (in defn environment): (\arabic{*})
%    - exception: $\delta$-combinatorial $n$-od cover uses (CI), (CII), (CIII)
%   Alternative conditions for defined notion (in defn environment): (\alph{*})
%   Concluded properties in result, or list of statements in a result: (\arabic{*})
%     - alternatives within statement in a result: (\arabic{enumi}\alph{*})  e.g. (2a), (2b), (2c)
%   Alternatives in result conclusion: (\alph{*})
%     - list of statements in one alternative: (\alph{enumi}\arabic{*})  e.g. (c1), (c2), (c3)
%   Properties assumed in result: (\roman{*})
%   Alternative options for assumption in result: (\Alph{*})
%   Properties in effect in text (not within result statement): (\arabic{section}-\roman{*})  e.g. (5-i), (5-ii) (in body of Section 5); or (\arabic{section}.\arabic{thm}-\roman{*})  e.g. (5.2-i), (5.2-ii) (in proof of Proposition 5.2)

\documentclass{amsart}

\usepackage{amsmath}
\usepackage{amssymb}
\usepackage{amsthm}
\usepackage{graphicx}
\usepackage{enumitem}
\usepackage{hyperref}

\theoremstyle{plain}
\newtheorem{thm}{Theorem}[section]
\newtheorem{prop}[thm]{Proposition}
\newtheorem{lem}[thm]{Lemma}

\theoremstyle{definition}
\newtheorem{defn}[thm]{Definition}
\newtheorem{claim}{Claim}[thm]
\newtheorem{question}{Question}

\newcommand{\id}{\mathrm{id}}

\begin{document}

\title[Combinatorial $n$-od covers]{Combinatorial $n$-od covers of graphs}
\author{Logan C. Hoehn \and Hugo Adrian Maldonado-Garcia}
\date{\today}

\address{Nipissing University, Department of Computer Science \& Mathematics, 100 College Drive, Box 5002, North Bay, Ontario, Canada, P1B 8L7}
\email{loganh@nipissingu.ca}

\address{Instituto de Matem\'{a}ticas, Universidad Nacional Aut\'{o}noma de M\'{e}xico, \'{A}rea de la Investigaci\'{o}n Cient\'{i}fica, Circuito exterior, Ciudad Universitaria, 04510, M\'{e}xico, CDMX}
\email{hugoadrmg@gmail.com}

\thanks{This work was supported by NSERC grant RGPIN-2019-05998}

\subjclass[2020]{Primary 54F15; Secondary 54C25, 54F50}
\keywords{Tree-like continuum; T-like continuum; plane continuum}

\begin{abstract}
We introduce the notion of a combinatorial $n$-od cover, for $n \geq 3$, which is a tool that may be used to show that certain continua embedded in the plane are not simple $n$-od-like.  Using this tool, we generalize a classic example of Ingram, and give a construction, for each $n \geq 3$, of an indecomposable plane continuum which is simple $(n+1)$-od-like but not simple $n$-od-like, and such that each non-degenerate proper subcontinuum is an arc.  These examples may be compared with related constructions of Kennaugh \cite{kennaugh2009}.
\end{abstract}

\maketitle

\section{Introduction}
\label{sec:intro}

This paper concerns the complexity of some tree-like continua in the plane $\mathbb{R}^2$, as measured by the simplest tree $T$ for which they are $T$-like, if such a tree exists.  Here, a \emph{continuum} is a non-empty compact, connected, metric space.  Recall that a continuum $X$ is \emph{tree-like} (respectively \emph{$T$-like}, where $T$ is a tree) if for any $\varepsilon > 0$, there exists an open cover of $X$ with mesh less than $\varepsilon$ whose nerve is homeomorphic to a tree (respectively, to a subcontinuum of $T$).  A \emph{simple $n$-od} is a tree that has exactly one branch point, which has degree $n$.  See Section~\ref{sec:prelim} below for more definitions of standard terms used throughout this paper.

The study of tree-like plane continua is relevant to some of the oldest and most significant open questions in continuum theory, see e.g.\ \cite[Epilogue]{bellamy1980}, \cite[Question~1]{hoehn-oversteegen2016}, and \cite[Introduction]{hoehn-oversteegen2020}.  In the spirit of understanding ``minimally complex'' tree-like plane continua, W.\ Lewis posed two questions in \cite{lewis2007} related to this subject.  Question 26 of \cite{lewis2007} asks whether, for each $n \geq 2$, there exists a simple $(n+1)$-od-like plane continuum which is not simple $n$-od-like, and such that each proper subcontinuum is an arc, where an \emph{arc} is a space which is homeomorphic to the closed interval $[0,1]$.  We answer this question in the affirmative in this paper.  The more general Question 27 of \cite{lewis2007} asks whether, for any graph $G$, there exists an atriodic $G$-like continuum that is not $H$-like for any graph $H$ which is a monotone image of $G$ but not vice versa.  Recall that a continuum $X$ is \emph{atriodic} if it contains no triod; that is, if $A \subset B \subseteq X$ are subcontinua, then $B \smallsetminus A$ has at most two components.

A continuum is \emph{indecomposable} if it is not the union of two of its proper subcontinua, and an \emph{arc continuum} is a continuum each of whose proper non-degenerate subcontinua is homeomorphic to an arc.  By a \emph{proper subcontinuum}, we mean a continuum $Y \subset X$ such that $Y \neq X$, and a continuum is \emph{non-degenerate} if it contains more than one point.  Note that each indecomposable arc continuum is, in particular, atriodic.  There are many interesting indecomposable tree-like arc continua in the literature; we list a few examples here:
\begin{enumerate}
\item W.\ T.\ Ingram \cite{ingram1972} constructed an indecomposable arc continuum in the plane which is simple triod-like and not arc-like (in fact, has positive span).  Subsequent variations of this example include an uncountable family of hereditarily indecomposable simple triod-like continua in the plane, such that every proper subcontinuum is a pseudoarc, and which are not arc-like (in fact, have positive span) \cite{ingram1979}.
\item P.\ Minc \cite{minc1993}, constructed an indecomposable arc continuum which is simple $4$-od-like but not simple triod-like.
\item C.\ T.\ Kennaugh \cite{kennaugh2009}, in his Ph.D.\ dissertation (not published), gave a construction, for every $n \geq 3$, of an indecomposable arc continuum which is simple $(n+1)$-od-like but not simple $n$-od-like.  These continua are not known to embed in the plane.
\item L.\ C.\ Hoehn \cite{hoehn2011} gave an example of an indecomposable simple triod-like arc continuum in the plane which is not arc-like, and which has span zero.
\end{enumerate}

In this paper, we introduce the notion of a combinatorial $n$-od cover, for $n \geq 2$, which is a tool that may be used to show that certain continua embedded in the plane are not simple $n$-od-like.  This is a generalization of the notion of a ``chain quasi-order'', defined in \cite{hoehn2011}.  Our work suggests that similar notions can be developed to show that certain continua are not $T$-like, for a given tree $T$ (see the discussion in Section~\ref{sec:questions} below).

As an application, we generalize the classic example of Ingram \cite{ingram1972}, and give a construction of a family of continua in the plane with the same properties as in \cite{kennaugh2009}.  In a forthcoming paper, we will use the same framework to construct such examples which also have span zero.

Many of the notions developed in this paper are adapted from corresponding ideas from \cite{hoehn2011}, and we will highlight the correspondences throughout.

\section{Preliminaries}
\label{sec:prelim}

Given two points $x_1,x_2 \in \mathbb{R}^2$, the distance between them is denoted by $\|x_1 - x_2\|$.

\subsection{Graphs}
\label{sec:graphs}

In this paper, a (combinatorial) graph $G$ consists of a set of vertices $\mathsf{V}(G)$ and a set of edges $\mathsf{E}(G)$.  An edge is an unordered pair $\{u,v\}$ of vertices.  Two vertices $u,v \in \mathsf{V}(G)$ are \emph{adjacent} if there is an edge between them, i.e.\ if $\{u,v\} \in \mathsf{E}(G)$.  The \emph{degree} of a vertex is the number of vertices which are adjacent to it.  Vertices $v_1,\ldots,v_m \in \mathsf{V}(G)$ are \emph{consecutive} in $G$ if $v_i$ and $v_{i+1}$ are adjacent for each $i \in \{1,\ldots,m-1\}$.  $G$ is \emph{connected} if for any distinct vertices $u,w \in \mathsf{V}(G)$, there are consecutive vertices $v_1,\ldots,v_m$ in $G$ such that $v_1 = u$ and $v_m = w$.  All graphs considered in this paper will be connected.

If $S \subseteq \mathsf{V}(G)$, the \emph{subgraph generated by $S$} is the graph $G'$ whose vertex set is $\mathsf{V}(G') = S$, and two vertices in $G'$ are adjacent if and only if they are adjacent in $G$.  Given a vertex $v \in \mathsf{V}(G)$, we define the subgraph $G - v$ to be the subgraph generated by the set $\mathsf{V}(G) \smallsetminus \{v\}$.  If $G$ is a connected graph with at least two vertices, then there exist at least two vertices $v \in \mathsf{V}(G)$ such that $G - v$ is also connected.  We define edge removal in a somewhat non-standard way: given an edge $e = \{u,v\} \in \mathsf{E}(G)$, the subgraph $G - e$ is obtained by removing the edge $e$ from $G$, and also removing the vertices $u$ and $v$ if $e$ was the only edge connected to them.  Precisely, $\mathsf{E}(G - e) = \mathsf{E}(G) \smallsetminus \{e\}$, $\mathsf{V}(G) \smallsetminus \{u,v\} \subseteq \mathsf{V}(G - e) \subseteq \mathsf{V}(G)$, and $u \in \mathsf{V}(G - e)$ (respectively, $v \in \mathsf{V}(G - e)$) if and only if it is adjacent to another vertex other than $v$ (respectively, other than $u$) in $G$.  If $G$ is connected and has at least one edge, then there exists at least one edge $e \in \mathsf{E}(G)$ such that $G - e$ is connected.

% Subgraph contraction?

Let $n \geq 3$.  An \emph{$n$-od graph} is a connected graph $G$ with exactly one vertex $o_G$ of degree $n$, called the \emph{branch vertex} of $G$, and all other vertices have degree $2$ or $1$.  The vertices in the connected components of $G - o_G$, where $o_G$ is the branch vertex of $G$, are called the \emph{legs} of $G$.  We assume that the legs come with a fixed enumeration, either $1$ through $n$ or $0$ through $n-1$.  For each leg number $\ell$, we denote the vertices on leg $\ell$ by $G(\ell,1), G(\ell,2), \ldots$, listed in order from nearest to the branch outwards.  We allow the possibility that the legs of $G$ have infinitely many vertices (as with the infinite $n$-od $C$ below, in Section~\ref{sec:comb covers}).  We will denote the branch vertex of $G$ by $G(i,0)$ for any leg number $i$, or by $G(\cdot,0)$.  A \emph{$2$-od graph} (or \emph{arc graph}) is a connected graph each of whose vertices has degree $2$ or $1$.

A \emph{topological graph} is a continuum which is the union of finitely many arcs, each two of which have finite (possibly empty) intersection.  Any finite combinatorial graph can be viewed as a topological graph by starting with the discrete vertex set $\mathsf{V}(G)$, then joining each pair of adjacent vertices $u,v \in \mathsf{V}(G)$ by an arc, which we denote by $uv$.  A \emph{simple $n$-od} is a graph which is the union of $n$ arcs, which all have one endpoint in common, and are otherwise pairwise disjoint.  Note that a finite $n$-od (combinatorial) graph, when viewed as a topological graph, is a simple $n$-od.

\subsection{Covers}
\label{sec:covers}

Let $X$ be a continuum.  Given an open cover $\mathcal{U}$ of $X$, the \emph{mesh} of $\mathcal{U}$ is the supremum of the diameters of elements of $\mathcal{U}$.  A continuum $X$ is $1$-dimensional if and only if for any $\varepsilon > 0$, there is an open cover $\mathcal{U}$ of $X$ such that any three distinct elements of $\mathcal{U}$ have empty intersection.  Given such a cover $\mathcal{U}$, the \emph{nerve} of $\mathcal{U}$ is the graph $\mathsf{N}(\mathcal{U})$ whose vertex set is $\mathcal{U}$, and two elements $U_1,U_2 \in \mathcal{U}$ are adjacent if and only if $U_1 \cap U_2 \neq \emptyset$.

Let $n \geq 2$.  By an \emph{$n$-od cover} of $X$, we mean an open cover $\mathcal{U}$ of $X$ whose nerve is an $n$-od graph.  A continuum $X$ is \emph{simple $n$-od-like} if for each $\varepsilon > 0$, there exists an $n$-od cover $\mathcal{U}$ of $X$ with mesh less than $\varepsilon$.  If $\mathcal{U}$ is an $n$-od cover of $X$, then we adopt a notation similar to what we use for combinatorial $n$-od graphs, to refer to the elements of $\mathcal{U}$.  Specifically, we assume the legs of $\mathcal{U}$ come with a fixed enumeration $1,\ldots,n$.  For each leg $\ell \in \{1,\ldots,n\}$, we denote the elements on leg $\ell$ by $U(\ell,1), U(\ell,2), \ldots$, listed in order from nearest to the branch outwards.  We will denote the element of $\mathcal{U}$ which is the branch vertex of $\mathsf{N}(\mathcal{U})$ by $U(\cdot,0)$, or by $U(i,0)$ for any leg $i \in \{1,\ldots,n\}$, whenever it is convenient to do so.

Let $\mathcal{U}$ be an $n$-od cover (or more generally, an open cover whose nerve is a tree).  Given two distinct elements $V,W \in \mathcal{U}$, the \emph{chain} in $\mathcal{U}$ from $W_1$ to $W_2$ is the unique sequence of distinct elements $U_1,\ldots,U_m \in \mathcal{U}$ such that $U_1 = V$, $U_m = W$, and $U_i \cap U_{i+1} \neq \emptyset$ for each $i \in \{1,\ldots,m-1\}$.  If $A \subset X$ is an arc and $V \cap A \neq \emptyset \neq W \cap A$, then each element in the chain in $\mathcal{U}$ from $V$ to $W$ must intersect $A$.

% Inverse limits, Hausdorff limit, Anderson-Choquet?

\section{Plane embeddings of graphs}
\label{sec:embeddings}

The continua constructed in this paper are obtained as limits of sequences of graphs embedded in the plane.  We develop notions in this section to facilitate the precise placement of graphs in the plane.

For the remainder of this paper, let $n \geq 2$ be a fixed integer.  Let $o$ denote the origin of the plane.  We will work with the $(n+1)$-od $T_0$ in $\mathbb{R}^2$ whose branch point is $o$, and whose legs are the straight line segments in $\mathbb{R}^2$ joining $o$ to the points $\left( 2 \cos \left( \frac{i\pi}{n} \right), 2 \sin \left( \frac{i\pi}{n} \right) \right)$, for $i = 0,\ldots,n$.

Define the function $b \colon \{0,\ldots,n\} \times [0,1] \to \mathbb{R}^2$ by
\[ b(i,t) = \left( (1+t) \cdot \cos(\tfrac{i\pi}{n}), (1+t) \cdot \sin(\tfrac{i\pi}{n}) \right) .\]
Define the set
\[ \Gamma = \{o\} \cup \{b(i,t): i \in \{0,\ldots,n\}, t \in [0,1]\} .\]
See Figure~\ref{fig:T_0} for an illustration of $T_0$ and the points of the form $b(i,t)$, for the case $n = 3$.

\begin{figure}
\begin{center}
\includegraphics{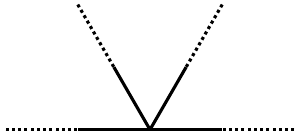}
\end{center}

\caption{The $(n+1)$-od $T_0$ in the case $n = 3$.  The set of points of the form $b(i,t)$ is represented by the dotted lines.}
\label{fig:T_0}
\end{figure}

Given an $(n+1)$-od $T \subset \mathbb{R}^2$ with branch point $v$, we say the legs of $T$ are in \emph{proper circular order} if for any neighborhood $U$ of $v$ in $\mathbb{R}^2$, there exists a path in $U \smallsetminus \{v\}$ which meets each leg in exactly one point, and the intersection with leg $i$ precedes the intersection with leg $j$ along the path whenever $i < j$.  For any two $(n+1)$-ods $T_1,T_2 \subset \mathbb{R}^2$ whose legs are in proper circular order, there is a homeomorphism $\Psi \colon \mathbb{R}^2 \to \mathbb{R}^2$ such that $\Psi(T_1) = T_2$ (and in fact $\Psi$ takes leg $i$ of $T_1$ to leg $i$ of $T_2$, for each $i = 0,\ldots,n$).

The next three Definitions below are adaptations of the notions of a ``$\Gamma$-marking'', a ``compliant graph-word'', and a ``$\langle T,\varepsilon \rangle$-sketch'', from \cite[Section 2.1]{hoehn2011}, to the present setting.

\begin{defn}
\label{defn:Gamma space}
A \emph{$\Gamma$-space} is a set $T \subset \mathbb{R}^2$, together with a function $\mathfrak{m} \colon \Gamma \to T$, called the \emph{$\Gamma$-marking of $T$}, such that:
\begin{enumerate}[label=(\arabic{*})]
\item $T$ is an $(n+1)$-od whose legs are in proper circular order;
\item $\mathfrak{m}(o)$ is the branch of $T$; and
\item for each $i \in \{0,\ldots,n\}$, $\mathfrak{m}(b(i,1))$ is the endpoint of leg $i$ of $T$ and the map $t \mapsto \mathfrak{m}(b(i,t))$, $t \in [0,1]$, isometrically parameterizes a straight arc in leg $i$ of $T$ which does not contain the branch of $T$.
\end{enumerate}
\end{defn}

Observe that $T_0$ is a $\Gamma$-space, with $\Gamma$-marking $\id_\Gamma$.

Throughout the remainder of this paper, we will abbreviate the phrase ``$x = b(i,t)$ for some $t \in [0,1]$'' by writing simply ``$x = b(i,\cdot)$''.

\begin{defn}
\label{defn:placement function}
Let $G$ be a graph.  A \emph{placement function} is a function $\omega \colon \mathsf{V}(G) \to \Gamma$ such that whenever $u,v \in \mathsf{V}(G)$ are adjacent vertices, we have either $\omega(u) = o$ and $\omega(v) = b(i,\cdot)$, or $\omega(v) = o$ and $\omega(u) = b(i,\cdot)$, for some $i \in \{0,\ldots,n\}$.
\end{defn}

\begin{defn}
\label{defn:embedding}
Let $T \subset \mathbb{R}^2$ be a $\Gamma$-space with $\Gamma$-marking $\mathfrak{m}_T \colon \Gamma \to T$, let $G$ be a graph, and let $\omega \colon \mathsf{V}(G) \to \Gamma$ be a placement function.  Given $\varepsilon > 0$, a \emph{$\langle \omega,T,\varepsilon \rangle$-embedding} of $G$ is an embedding $\Omega \colon G \to \mathbb{R}^2$ together with a projection $\pi \colon \Omega(G) \to T$ such that:
\begin{enumerate}[label=(\arabic{*})]
\item for each vertex $u \in \mathsf{V}(G)$, $\pi(\Omega(u)) = \mathfrak{m}_T(\omega(u))$;
\item for each edge $\{u,v\}$ in $G$, the restriction of $\pi$ to the arc in $\Omega(G)$ joining $\Omega(u)$ and $\Omega(v)$ corresponding to this edge is a homeomorphism to the arc in $T$ joining the points $\mathfrak{m}_T(\omega(u))$ and $\mathfrak{m}_T(\omega(v))$; and
\item for any $x \in \Omega(G)$, the distance from $x$ to $\pi(x)$ is less than $\varepsilon$.
\end{enumerate}
\end{defn}

The following result is included for practical purposes.  It shows that in order to construct $\langle \omega,T,\varepsilon \rangle$-embeddings of a graph $G$, where $\omega \colon \mathsf{V}(G) \to \Gamma$ is a placement function and $T$ is a $\Gamma$-space, it suffices to exhibit $\langle \omega,T_0,\varepsilon \rangle$-embeddings of $G$, where $T_0$ is the $\Gamma$-space described above.

\begin{prop}
\label{prop:embed G}
Let $T \subset \mathbb{R}^2$ be a $\Gamma$-space with $\Gamma$-marking $\mathfrak{m}_T$, let $G$ be an $(n+1)$-od graph and let $\omega \colon \mathsf{V}(G) \to \Gamma$ be a placement function.  Suppose that for any $\varepsilon > 0$, there exists a $\langle \omega,T_0,\varepsilon \rangle$-embedding $\Omega_0$ of $G$ with projection $\pi_0 \colon \Omega_0(G) \to T_0$ such that:
\begin{enumerate}[label=(\roman{*})]
\item $\Omega_0(G)$ is a $\Gamma$-space with $\Gamma$-marking $\mathfrak{m}_0$; and
\item for each $i \in \{0,\ldots,n\}$, there exists $i' \in \{0,\ldots,n\}$ such that for all $t \in [0,1]$, $\pi_0 \circ \mathfrak{m}_0(b(i,t)) = b(i',t)$.
\end{enumerate}
Then for any $\varepsilon > 0$, there exists a $\langle \omega,T,\varepsilon \rangle$-embedding $\Omega$ of $G$ with projection $\pi \colon \Omega(G) \to T$ such that:
\begin{enumerate}[label=(\arabic{*})]
\item $\Omega(G)$ is a $\Gamma$-space with $\Gamma$-marking $\mathfrak{m}$; and
\item for each $i \in \{0,\ldots,n\}$, there exists $i' \in \{0,\ldots,n\}$ such that for all $t \in [0,1]$, $\pi \circ \mathfrak{m}(b(i,t)) = \mathfrak{m}_T(b(i',t))$.
\end{enumerate}
\end{prop}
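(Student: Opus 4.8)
The plan is to relate the two $\Gamma$-spaces by a single plane homeomorphism $\Psi$, transport the hypothesized embeddings into $T_0$ over to $T$ by composition, and then check that the resulting data meets every requirement. First I would construct a homeomorphism $\Psi \colon \mathbb{R}^2 \to \mathbb{R}^2$ with two features: (a) $\Psi(T_0) = T$ and $\Psi|_\Gamma = \mathfrak{m}_T$, so that $\Psi$ carries the branch and each leg of $T_0$ to the branch and corresponding leg of $T$, taking the outer arc $t \mapsto b(i,t)$ isometrically onto $t \mapsto \mathfrak{m}_T(b(i,t))$; and (b) there is a radius $\rho > 0$ such that $\Psi$ restricts to a planar isometry on the $\rho$-neighborhood of each outer leg $\{b(i,t) : t \in [0,1]\}$ of $T_0$. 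To build $\Psi$ I would extend $\mathfrak{m}_T$ over the inner halves of the legs to a homeomorphism $T_0 \to T$; on a collar of each outer leg I would instead use the planar isometry $\phi_i$ extending the isometric marking $t \mapsto \mathfrak{m}_T(b(i,t))$ (which agrees with $\mathfrak{m}_T$ on the outer leg, so the pieces glue); and finally extend over the complementary region using the proper circular order of the legs, exactly as in the existence statement for leg-preserving plane homeomorphisms recalled above.

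Next, given $\varepsilon > 0$, I would fix a compact neighborhood $N$ of $T_0$ (say the closed $\rho$-neighborhood) and, using uniform continuity of $\Psi$ on $N$, choose $\varepsilon' > 0$ with $\varepsilon' < \rho$ and small enough that $\|\Psi(a) - \Psi(b)\| < \varepsilon$ whenever $a,b \in N$ and $\|a - b\| < \varepsilon'$. Applying the hypothesis with $\varepsilon'$ produces a $\langle \omega, T_0, \varepsilon' \rangle$-embedding $\Omega_0$ with projection $\pi_0$ such that $S_0 := \Omega_0(G)$ is a $\Gamma$-space with marking $\mathfrak{m}_0$ and $\pi_0 \circ \mathfrak{m}_0(b(i,t)) = b(i',t)$ for some $i'$. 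I then set $\Omega := \Psi \circ \Omega_0$, which is an embedding, together with $\pi := \Psi \circ \pi_0 \circ \Psi^{-1} \colon \Omega(G) \to T$ (well defined since $\Psi^{-1}(\Omega(G)) = S_0$ and $\Psi(T_0) = T$) and the candidate marking $\mathfrak{m} := \Psi \circ \mathfrak{m}_0$.

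The verification is then a routine composition-chase. Conditions (1)--(3) of a $\langle \omega, T, \varepsilon \rangle$-embedding hold because, using $\mathfrak{m}_{T_0} = \id_\Gamma$ and $\Psi|_\Gamma = \mathfrak{m}_T$, one has $\pi(\Omega(u)) = \Psi(\pi_0(\Omega_0(u))) = \Psi(\omega(u)) = \mathfrak{m}_T(\omega(u))$; each edge-arc of $\Omega(G)$ is the $\Psi$-image of the corresponding edge-arc of $S_0$, carried homeomorphically by $\pi$ onto the arc of $T$ joining $\mathfrak{m}_T(\omega(u))$ and $\mathfrak{m}_T(\omega(v))$ since $\Psi \colon T_0 \to T$ matches markings; and the distance bound $\|x - \pi(x)\| < \varepsilon$ is exactly the content of the uniform-continuity choice of $\varepsilon'$. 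For output conclusion (1), $\Omega(G) = \Psi(S_0)$ is an $(n+1)$-od with legs in proper circular order as a plane homeomorph of such, and since $\varepsilon' < \rho$ the outer leg $\mathfrak{m}_0(b(i,\cdot))$ of $S_0$ lies where $\Psi$ is an isometry, so $t \mapsto \mathfrak{m}(b(i,t)) = \Psi(\mathfrak{m}_0(b(i,t)))$ isometrically parameterizes a straight arc; hence $\mathfrak{m}$ is a $\Gamma$-marking of $\Omega(G)$. Conclusion (2) is immediate: $\pi \circ \mathfrak{m}(b(i,t)) = \Psi(\pi_0(\mathfrak{m}_0(b(i,t)))) = \Psi(b(i',t)) = \mathfrak{m}_T(b(i',t))$, using hypothesis (ii) and $\Psi|_\Gamma = \mathfrak{m}_T$.

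I expect the main obstacle to be the construction of $\Psi$ in the first step, namely reconciling its three demands simultaneously: global agreement with $\mathfrak{m}_T$ on $\Gamma$, the leg-preserving extension across the inner halves of the legs and the complementary region, and the local isometry on the outer-leg collars. The last demand is precisely what forces $\Omega(G)$ to be a genuine $\Gamma$-space rather than merely a topological $(n+1)$-od, so it cannot be dropped; a naive choice of $\Psi$ would distort the straight isometric outer legs of $S_0$. The delicate point is thus the gluing of the prescribed collar isometries $\phi_i$ to the extension over the inner region while keeping $\Psi$ a homeomorphism respecting the proper circular order. Once $\Psi$ is in hand, everything that follows is formal.
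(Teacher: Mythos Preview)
Your overall strategy---transport a $T_0$-embedding through a plane homeomorphism that carries $T_0$ onto $T$ and is rigid near the outer legs, then chase compositions---is exactly the paper's. The verification paragraph is correct once the homeomorphism is in hand.

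The gap is in your construction of $\Psi$. You ask that $\Psi(T_0)=T$ and that $\Psi$ restrict to a planar isometry on the full open $\rho$-neighborhood of each outer leg $\{b(i,t):t\in[0,1]\}$. These two demands are incompatible in general. That $\rho$-neighborhood contains a short arc of the \emph{inner} portion of leg $i$ of $T_0$ (the stretch of the leg just inside $b(i,0)$, which runs straight toward the origin). An isometry must carry this to a straight continuation of $A_i$ past $\mathfrak{m}_T(b(i,0))$; but nothing in the definition of a $\Gamma$-space forces leg $i$ of $T$ to continue straight there---it can bend immediately at $\mathfrak{m}_T(b(i,0))$. Hence your collar isometry $\phi_i$ and the interior extension cannot be glued to a homeomorphism with $\Psi(T_0)=T$. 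You correctly identify the gluing as the delicate point, but as stated the map simply does not exist.

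The paper resolves exactly this obstruction by splitting $\Psi$ into two homeomorphisms. First, the sleeve lemma (Lemma~\ref{lem:sleeve}) produces a small perturbation $\Theta_1$ so that each $A_i$ has a closed \emph{rectangular} sleeve relative to $\Theta_1^{-1}(T)$; the point is that the inner endpoint $\mathfrak{m}_T(b(i,0))$ sits on the boundary of the rectangle, so the remainder of the leg stays outside and causes no conflict. Second, a homeomorphism $\Theta_2$ takes $T_0$ onto $\Theta_1^{-1}(T)$ and is an isometry between matching closed sleeves (extended to $\mathbb{R}^2$ via Adkisson--MacLane). Your $\phi_i$'s are essentially the sleeve isometries inside $\Theta_2$; what your proposal is missing is the analogue of $\Theta_1$ that clears room for them, and the use of closed sleeves rather than open $\rho$-neighborhoods so that the rigidity does not spill over onto the inner leg.
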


The basic idea of the proof of Proposition~\ref{prop:embed G} is to choose a homeomorphism of the plane taking $T_0$ to $T$, then compose it with an embedding of $G$ very close to $T_0$ to obtain the desired embedding of $G$ close to $T$.  The only technical issue in this plan is that we need to ensure that the ends of the legs of $G$ are embedded as straight-line segments, and that condition (2) is satisfied.  For this, we first formulate and prove the following technical result, whose statement and proof may be of independent worth, although it is not at all surprising.  It states that for any free straight-line arc in a plane graph, we may clear out the rest of the graph from a rectangular neighborhood of the arc with an arbitrarily small motion of the plane near the endpoints of the arc.

Given a graph $H \subset \mathbb{R}^2$ and a straight-line segment $A \subseteq H$, a \emph{rectangular sleeve} for $A$ relative to $H$ is a closed rectangle $R \subset \mathbb{R}^2$ (not necessarily axis-aligned) such that $R \cap H = A$ and $A$ joins the midpoints of two opposite sides of $R$ (i.e.\ $A$ is a ``mid-line'' of $R$).

\begin{lem}
\label{lem:sleeve}
Let $H \subset \mathbb{R}^2$ be a graph and suppose $A \subseteq H$ is a straight-line segment which contains no branch points of $H$, except possibly the endpoints of $A$.  For any $\varepsilon > 0$, there exists a homeomorphism $\Theta \colon \mathbb{R}^2 \to \mathbb{R}^2$ of the plane such that:
\begin{enumerate}[label=(\arabic{*})]
\item for all $x \in \mathbb{R}^2$, the distance from $x$ to $\Theta(x)$ is less than $\varepsilon$;
\item $\Theta(x) = x$ for all $x \in A$ and for all $x \in \mathbb{R}^2$ which are at distance at least $\varepsilon$ from the endpoints of $A$; and
\item there exists a rectangular sleeve for $A = \Theta(A)$ relative to $\Theta(H)$.
\end{enumerate}
\end{lem}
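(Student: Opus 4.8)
The plan is to construct $\Theta$ as a composition of two homeomorphisms, each supported in a small disk around one endpoint of $A$, so that I only need to solve a purely local problem near a single endpoint. Let me denote the endpoints of $A$ by $p$ and $q$. Away from $p$ and $q$, the segment $A$ already has a rectangular sleeve for free: since $A$ contains no branch points in its interior, the only trouble spots where other parts of $H$ might crowd against $A$ are at the two endpoints, where branches of $H$ may emanate. So the entire difficulty localizes to clearing a thin strip around each endpoint.

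**The local problem** near, say, the endpoint $p$ is this: I have the straight arc $A$ entering $p$, together with finitely many other arcs of $H$ also meeting $p$ (these are the edges of the branch point $p$, if $p$ is a branch point; if $p$ is merely an endpoint of $A$ and has degree one or two in $H$, the situation is even easier). I want to find a homeomorphism $\Theta_p$ of the plane, supported in a disk $D_p$ of radius less than $\varepsilon$ centered at $p$, which is the identity on $A$ and which pushes all the other arcs out of a thin rectangular neighborhood of $A \cap D_p$. First I would choose the disk $D_p$ small enough (radius $< \varepsilon/2$, say) that within $D_p$ each arc of $H$ meeting $p$ is a simple arc from $p$ to $\partial D_p$ meeting $\partial D_p$ in a single point, and so that these arcs are pairwise disjoint except at $p$ (possible since $H$ is a topological graph, whose arcs have finite pairwise intersection, after shrinking $D_p$). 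Then I would construct $\Theta_p$ by a ``fanning'' or ``combing'' isotopy: think of $\partial D_p$ with finitely many marked points (where the arcs cross it, including the two points where $A$ and its opposite continuation — or just the one point, if $A$ ends at $p$ — cross). I fix the marked points on $\partial D_p$ and fix $A$, and I use a radial homeomorphism of $D_p$ that bends each non-$A$ arc so that it leaves a prescribed thin angular wedge about the direction of $A$ empty.

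**Concretely**, I would pass to polar coordinates centered at $p$ and build $\Theta_p$ as a homeomorphism of the form $(r,\theta) \mapsto (r, f_r(\theta))$, where for each radius $r \in (0, \mathrm{radius}(D_p))$ the map $f_r$ is an orientation-preserving homeomorphism of the circle of angles that is the identity near the angle $\theta_A$ of $A$ and that redistributes the remaining angles so the images of the non-$A$ arcs avoid a fixed small wedge $(\theta_A - \delta, \theta_A + \delta)$. Taking $f_r$ to depend on $r$ so that $f_r = \id$ for $r \ge \mathrm{radius}(D_p)$ (matching the fixed marked points on $\partial D_p$) and interpolating inward guarantees that $\Theta_p$ is a homeomorphism, is supported in $D_p$, fixes $A$ pointwise, and moves no point by more than the diameter of $D_p < \varepsilon$. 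After applying $\Theta_p$ and the analogous $\Theta_q$ (with disjoint supports $D_p, D_q$, chosen disjoint from each other since $\|p-q\|$ is bounded below and $D_p,D_q$ can be taken small), the image $\Theta(H) = \Theta_q \circ \Theta_p(H)$ has all branches leaving a uniform wedge around $A$ empty at both endpoints, and $A$ is undisturbed along its interior; I then read off a rectangular sleeve $R$ as a sufficiently thin rectangle with $A$ as mid-line, thin enough that its width is less than the wedge clearance at the ends and less than the distance from $A$ to the rest of $H$ along the interior. Properties (1) and (2) follow from the support and displacement bounds, and (3) is exactly the existence of this $R$.

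**The main obstacle** I anticipate is making the local combing map rigorous as a genuine homeomorphism while simultaneously respecting all three constraints — in particular, keeping it the identity both on $A$ \emph{and} outside $D_p$, and ensuring the interpolation in $r$ is continuous and invertible. One must be careful that the finitely many non-$A$ arcs entering $p$ may approach at the \emph{same} tangent direction as $A$, so the ``wedge'' argument needs the disk small enough that the arcs are genuinely separated as subarcs (not just tangentially), and the angular redistribution $f_r$ must be defined to handle arcs on both sides of $A$ consistently. The cleanest route is probably to first straighten each non-$A$ arc in $D_p$ to a radial segment by a preliminary homeomorphism (using that they are disjoint simple arcs from $p$ to $\partial D_p$), reducing to the model case where everything is radial, and then the angular push is elementary; the composite remains within the $\varepsilon$ displacement bound since everything stays inside $D_p$.
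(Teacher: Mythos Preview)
Your localization strategy is sound, and the reduction to a problem near each endpoint is exactly right. However, the specific local homeomorphism you propose---an angular ``combing'' $(r,\theta)\mapsto(r,f_r(\theta))$ that clears a thin wedge $|\theta-\theta_A|<\delta'$ around the direction of $A$---does not achieve the goal. A wedge and a rectangle have incompatible shapes at the apex $p$: for any rectangle half-width $\delta>0$, the set $[0,L]\times[-\delta,\delta]$ contains points arbitrarily close to $p$ that lie \emph{outside} the wedge (namely $(x,\pm\delta)$ with $0<x<\delta/\tan\delta'$). A non-$A$ arc, even after being combed out of the wedge, still enters $p$ and must do so through this region; concretely, a radial segment at any angle $0<|\beta|<\pi/2$ meets the rectangle in a nondegenerate subarc no matter how thin the rectangle is. Your remark ``width less than the wedge clearance'' suggests you are treating the wedge as if it contained a rectangular strip around $A\cap D_p$, which it does not near the vertex.

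The paper's proof takes a different route: rather than rotating around $p$, it pushes \emph{along} the direction of $A$ via a horizontal shear $\Theta(x,y)=(q_y(x),y)$ supported near each endpoint, where $q_y$ shifts points in $[0,\varepsilon']$ left by an amount $f(y)$. Here $f$ is continuous with $f(0)=0$ (so $A$ is fixed) and $f(y)\geq g(y)$, where $g(y)$ records the rightmost $x$-coordinate at which $H$ protrudes into the strip at height $y$; such an $f$, sandwiched between the upper-semicontinuous $g$ and a suitable lower-semicontinuous step function, is produced by the Kat\v{e}tov--Tong insertion theorem. This shear pushes every non-$A$ point of $H$ in the strip to $x\leq 0$, clearing the rectangle exactly. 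Your approach could in principle be salvaged by combing arcs past $|\theta|=\pi/2$ (into the half-plane opposite $A$) for all sufficiently small radii, but since the arcs' exit points on $\partial D_p$ are fixed and may sit at arbitrarily small angles (when an arc is tangent to $A$ at $p$), this requires a genuinely $r$-dependent push and a more careful construction than the one you sketched.
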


\begin{proof}
We may assume without loss of generality that $A$ is the horizontal arc $[0,1] \times \{0\}$ in $\mathbb{R}^2$.  Let $\varepsilon > 0$.  Choose $0 < \varepsilon' < \min\{\frac{\varepsilon}{4}, \frac{1}{4}\}$, and choose $0 < \delta \leq \varepsilon'$ small enough so that $\left( [\varepsilon', 1 - \varepsilon'] \times [-\delta,\delta] \right) \cap H \subset A$.  We will define a plane homeomorphism $\Theta$ which is non-trivial only in a small neighborhood of the endpoint $(0,0)$ of $A$, for which $\left( [0, 1 - \varepsilon'] \times [-\delta,\delta] \right) \cap \Theta(H) \subset A$.  Doing the same also at the endpoint $(1,0)$ of $A$ yields the desired plane homeomorphism.

Define $g \colon [-\delta,\delta] \to \mathbb{R}$ by
\[ g(y) = \begin{cases}
\sup \{x \in [0,\varepsilon'): (x,y) \in H\} &\textrm{if } y \neq 0 \textrm{ and } [0,\varepsilon'] \times \{y\} \cap H \neq \emptyset \\
0 &\textrm{otherwise.}
\end{cases} \]
By the choice of $\delta$, we have $g(y) < \varepsilon'$ for all $y \in [-\delta,\delta]$.  It is easy to see that $g$ is upper semi-continuous.  Define the lower semi-continuous function $h \colon [-\delta,\delta] \to \mathbb{R}$ by
\[ h(y) = \begin{cases}
\varepsilon' &\textrm{if } y \neq 0 \\
0 &\textrm{if } y = 0 .
\end{cases} \]
Since $g(y) \leq h(y)$ for all $y \in [-\delta,\delta]$, by the Kat\v{e}tov-Tong Insertion Theorem (see e.g.\ \cite{engelking1977}), there exists a continuous function $f \colon [-\delta,\delta] \to \mathbb{R}$ such that $g \leq f \leq h$.  Extend $f$ to $\mathbb{R}$ by setting $f(y) = 0$ for all $y \notin (-2\delta,2\delta)$, and extending $f$ linearly on $(-2\delta,-\delta) \cup (\delta,2\delta)$.

For each $y \in \mathbb{R}$, define $q_y \colon \mathbb{R} \to \mathbb{R}$ by setting $q_y(x) = x - f(y)$ if $x \in [0,\varepsilon']$, $q_y(x) = x$ if $x \notin (-2\varepsilon',2\varepsilon')$, and extending $q_y$ linearly in between these ranges.  Now define $\Theta \colon \mathbb{R}^2 \to \mathbb{R}^2$ by $\Theta(x,y) = (q_y(x),y)$.

It is straightforward to see that $\Theta$ is a homeomorphism of the plane which moves no point more than $\varepsilon$ (in fact $\varepsilon'$), and $\Theta$ is the identity outside of the rectangle $(-2\delta,2\delta) \times (-2\varepsilon',2\varepsilon')$, which is contained in the ball centered at $(0,0)$ of radius $\varepsilon$.  Also, by construction, $\left( [0, 1 - \varepsilon'] \times [-\delta,\delta] \right) \cap \Theta(H) \subset A$.  By performing the analogous construction at the endpoint $(1,0)$ of $A$, and composing the two plane homeomorphisms, we obtain the desired homeomorphism $\Theta \colon \mathbb{R}^2 \to \mathbb{R}^2$ for which $\left( [0,1] \times [-\delta,\delta] \right) \cap \Theta(H) = A$.
\end{proof}

\begin{proof}[Proof of Proposition~\ref{prop:embed G}]
Let $\varepsilon > 0$.  We may assume that the circular order of the legs of $T$ are the same as for $T_0$, so that there is an orientation-preserving homeomorphism of the plane taking $T_0$ to $T$.

For each $i \in \{0,\ldots,n\}$, let $A_i = \{\mathfrak{m}_T(b(i,t)): t \in [0,1]\}$ be the straight segment at the end of leg $i$ of $T$.  According to Lemma~\ref{lem:sleeve}, there exists a homeomorphism $\Theta_1 \colon \mathbb{R}^2 \to \mathbb{R}^2$ of the plane such that:
\begin{enumerate}[label=(\arabic{section}.\arabic{thm}-\roman{*})]
\item for all $x \in \mathbb{R}^2$, the distance from $x$ to $\Theta_1(x)$ is less than $\frac{\varepsilon}{4}$;
\item for each $i \in \{0,\ldots,n\}$ and all $x \in A_i$, $\Theta_1(x) = x$; and
\item for each $i \in \{0,\ldots,n\}$ there exists a rectangular sleeve for $A_i = \Theta_1^{-1}(A_i)$ relative to $\Theta_1^{-1}(T)$.
\end{enumerate}

Now consider a homeomorphism from the union of $T_0$ with some rectangular sleeves $R_i$ at each of the arcs $\{b(i,t): t \in [0,1]\}$, $i \in \{0,\ldots,n\}$, to the union of $\Theta_1^{-1}(T)$ with rectangular sleeves at each of the arcs $A_i = \Theta_1^{-1}(A_i)$, which is an isometry between corresponding rectangular sleeves.  By \cite{adkisson-maclane1940}, this homeomorphism extends to a homeomorphism $\Theta_2 \colon \mathbb{R}^2 \to \mathbb{R}^2$ of the plane.  So, $\Theta_2(T_0) = \Theta_1^{-1}(T)$, and for each $i \in \{0,\ldots,n\}$ and all $t \in [0,1]$, $\Theta_2(b(i,t)) = \mathfrak{m}_T(b(i,t))$.

Let $\varepsilon' > 0$ be small enough so that for any $x \in \mathbb{R}^2$ and $y \in T_0$, if the distance between $x$ and $y$ is less than $\varepsilon'$ then the distance between $\Theta_2(x)$ and $\Theta_2(y)$ is less than $\frac{\varepsilon}{2}$.  By assumption, there exists a $\langle \omega,T_0,\varepsilon' \rangle$-embedding $\Omega_0$ of $G$ with projection $\pi_0 \colon \Omega_0(G) \to T_0$ such that:
\begin{enumerate}[resume, label=(\arabic{section}.\arabic{thm}-\roman{*})]
\item $\Omega_0(G)$ is a $\Gamma$-space with $\Gamma$-marking $\mathfrak{m}_0$; and
\item \label{i'} for each $i \in \{0,\ldots,n\}$, there exists $i' \in \{0,\ldots,n\}$ such that for all $t \in [0,1]$, $\pi_0 \circ \mathfrak{m}_0(b(i,t)) = b(i',t)$.
\end{enumerate}
We may assume further that $\varepsilon'$ is small enough so that $\mathfrak{m}_0(b(i,t)) \in R_{i'}$ for each $i \in \{0,\ldots,n\}$ and all $t \in [0,1]$, where $i'$ is as in \ref{i'}.

Now define $\Omega = \Theta_2 \circ \Omega_0$, $\mathfrak{m} = \Theta_2 \circ \mathfrak{m}_0$, and $\pi = \Theta_1 \circ \Theta_2 \circ \pi_0 \circ \Theta_2^{-1}$.  It should be clear that $\Omega$ is a $\langle \omega,T,\varepsilon \rangle$-embedding of $G$ with projection $\pi$.  Since $\Theta_2$ is an isometry on each of the rectangular sleeves $R_i$, we obtain that $\Omega(G)$ is a $\Gamma$-space with $\Gamma$-marking $\mathfrak{m}$.  For condition (2), we have
\[ \pi \circ \mathfrak{m} = \Theta_1 \circ \Theta_2 \circ \pi_0 \circ \Theta_2^{-1} \circ \Theta_2 \circ \mathfrak{m}_0 = \Theta_1 \circ \Theta_2 \circ \pi_0 \circ \mathfrak{m}_0 .\]
So, given $i \in \{0,\ldots,n\}$ and $t \in [0,1]$, if $i'$ is as in \ref{i'}, then
\begin{align*}
\pi \circ \mathfrak{m}(b(i,t)) &= \Theta_1 \circ \Theta_2(b(i',t)) && \textrm{by \ref{i'}} \\
&= \Theta_1(\mathfrak{m}_T(b(i',t))) && \textrm{by choice of $\Theta_2$} \\
&= \mathfrak{m}_T(b(i',t)) && \textrm{since $\Theta_1$ is the identity on $A_{i'}$.}
\end{align*}
\end{proof}

\section{Combinatorial $n$-od covers}
\label{sec:comb covers}

In this section, we introduce the notion of a combinatorial $n$-od cover, which expresses some combinatorial data extracted from a small mesh $n$-od cover of a graph, embedded in the plane in the manner of the previous section.  This provides a tool for showing that certain plane continua are not simple $n$-od-like.

As above, let $n \geq 2$ be a fixed integer.  Throughout this section, assume that $G$ is a connected graph and $\omega \colon \mathsf{V}(G) \to \Gamma$ is a placement function.

\begin{defn}
\label{defn:comb cover}
Let $\delta > 0$.  A \emph{$\delta$-combinatorial $n$-od cover} for $\omega$ is a function $f \colon \mathsf{V}(G) \to \mathsf{V}(C)$ such that for any vertices $u,v,v_1,v_2,v_3$ of $G$ we have the following properties:
\begin{enumerate}[label={\normalfont \textbf{(C\Roman{*})}}]
\item \label{C1} If $f(u) = f(v)$, then either $\omega(u) = \omega(v) = o$, or there exists $i \in \{0,\ldots,n\}$ such that $\omega(u) = b(i,\cdot)$ and $\omega(v) = b(i,\cdot)$;
\item \label{C2} If $u$ and $v$ are adjacent in $G$, then $f(u)$ and $f(v)$ are adjacent in $C$; and
\item \label{C3} Suppose that $v_1,v_2,v_3$ are consecutive in $G$, $v \notin \{v_1,v_2,v_3\}$, $f(v_1) \neq f(v_3)$, $f(v) = f(v_2)$ and $f(v_2) \neq C(\cdot,0)$.  Suppose also that $0 \leq s < t \leq 1$, and that for some $i \in \{0,\ldots,n\}$, $\omega(v_2) = b(i,s)$ and $\omega(v) = b(i,t)$.  Then $t - s < \delta$.
\end{enumerate}
\end{defn}

This notion may be compared with that of a ``chain quasi-order'' from \cite[Section 3.1]{hoehn2011}.  Precisely, apart from the condition ``$f(v_2) \neq C(\cdot,0)$'' in \ref{C3}, a combinatorial $2$-od cover is otherwise equivalent to a chain quasi-order.

The following result is an analog of Proposition~5 of \cite{hoehn2011}.

\begin{prop}
\label{prop:cover to comb cover}
Let $\delta > 0$ be sufficiently small in the sense that:
\begin{itemize}
\item for any distinct $i_1,i_2 \in \{0,\ldots,n\}$ and any $t_1 \in [0,1]$, the distance between the point $b(i_1,t_1)$ and any point on the arc in $T_0$ from $o$ to $b(i_2,1)$ is greater than $4\delta$,
\end{itemize}
and let $\varepsilon_1,\varepsilon_2 > 0$ be such that $2\varepsilon_1 + \varepsilon_2 \leq \delta$.  Suppose $\Omega$ is a $\langle \omega,T_0,\varepsilon_1 \rangle$-embedding of $G$.  If there exists an open $n$-od cover of $\Omega(G)$ of mesh less than $\varepsilon_2$, then there exists a $\delta$-combinatorial $n$-od cover of $\omega$.
\end{prop}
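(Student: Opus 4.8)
The plan is to read the required combinatorial data directly off the geometric $n$-od cover. Write $\mathcal{U}$ for the given open $n$-od cover of $\Omega(G)$ of mesh less than $\varepsilon_2$, and let $N = \mathsf{N}(\mathcal{U})$ be its nerve, a finite $n$-od graph with the induced leg-enumeration. There is a natural label-preserving embedding $\phi \colon N \hookrightarrow C$ sending the branch $U(\cdot,0)$ to $C(\cdot,0)$ and $U(\ell,j)$ to $C(\ell,j)$. As a first attempt, for each vertex $v$ choose a link $U_v \in \mathcal{U}$ with $\Omega(v) \in U_v$, and set $f(v) = \phi(U_v)$. The one estimate I would use repeatedly is that, since $\Omega$ is a $\langle \omega, T_0, \varepsilon_1 \rangle$-embedding with projection $\pi$ and $\mathfrak{m}_{T_0} = \id_\Gamma$, we have $\pi(\Omega(v)) = \omega(v)$, and hence $\|\Omega(v) - \omega(v)\| < \varepsilon_1$, for every $v$.

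With this $f$, conditions \ref{C1} and \ref{C3} are immediate. If $f(u) = f(v)$ then, as $\phi$ is injective, $U_u = U_v$, so $\|\Omega(u) - \Omega(v)\| < \varepsilon_2$ and therefore $\|\omega(u) - \omega(v)\| \le \|\omega(u) - \Omega(u)\| + \|\Omega(u) - \Omega(v)\| + \|\Omega(v) - \omega(v)\| < 2\varepsilon_1 + \varepsilon_2 \le \delta$. For \ref{C1}: the case $\omega(u) = o$, $\omega(v) = b(i,\cdot)$ is impossible because $\|o - b(i,t)\| = 1 + t \ge 1 > \delta$, where I note that the separation hypothesis forces $\delta < \tfrac14$ (the arc from $o$ to $b(i_2,1)$ contains $o$, so the distance from $b(i_1,0)$ to that arc is at most $\|b(i_1,0)-o\| = 1$, yet it exceeds $4\delta$); and the case $\omega(u) = b(i_1,\cdot)$, $\omega(v) = b(i_2,\cdot)$ with $i_1 \ne i_2$ is impossible by the $4\delta$-separation hypothesis, since $\omega(v)$ lies on the arc from $o$ to $b(i_2,1)$. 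Hence $\omega(u) = \omega(v) = o$ or both lie on a common leg, which is \ref{C1}. For \ref{C3}, the hypotheses include $f(v) = f(v_2)$ with $\omega(v_2) = b(i,s)$ and $\omega(v) = b(i,t)$, and the same computation gives $t - s = \|\omega(v_2) - \omega(v)\| < \delta$ directly (in fact the turn hypotheses $f(v_1)\neq f(v_3)$ and $f(v_2)\neq C(\cdot,0)$ are not needed for this implication).

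The genuine difficulty is \ref{C2}, and I expect it to be the main obstacle. Because $\omega$ is a placement function, every edge $\{u,v\}$ satisfies $\{\omega(u), \omega(v)\} = \{o, b(i,s)\}$, so the arc $\Omega(uv)$ projects homeomorphically onto a segment of $T_0$ of length $1 + s \ge 1$. As the mesh $\varepsilon_2 \le \delta < \tfrac14$ is far smaller than this, the arc passes through many links, and the links $U_u,U_v$ containing its endpoints are typically far apart in $N$; two adjacent links have a common point and hence union of diameter less than $2\varepsilon_2 < 1$, so they cannot contain points at distance $\ge 1-2\varepsilon_1$. Thus the naive $f$ above is emphatically \emph{not} a graph homomorphism and fails \ref{C2}. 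The real content of the proof is to produce a single $f$ that is a homomorphism into $C$ while retaining the placement-closeness exploited above.

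To resolve \ref{C2} I would not read $f$ off the link containing each vertex, but instead fold $G$ onto $C$ along the cover. For each edge $\{u,v\}$ the links met by $\Omega(uv)$ form the chain from $U_u$ to $U_v$, a geodesic in the tree $N$ recording one cover-leg together with a monotone change of depth; I would use these per-edge geodesics to assign to each vertex a leg of $C$ and a depth so that traversing an edge advances one step along $C$ in the direction dictated by the corresponding geodesic, reversing precisely where the cover reverses. Two of the $n+1$ legs of $T_0$ must then be folded into a common leg of $C$, and the role of the separation hypothesis is exactly to guarantee that distinct legs of $T_0$ are never folded onto a common vertex of $C$ (recovering \ref{C1}) and that the fold points sit at consistent depths (recovering \ref{C3}). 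The crux, and the step I expect to be delicate, is to verify that this folded assignment is globally well defined on $G$ (independent of the path along which a vertex is reached) while \ref{C1}, \ref{C2}, and \ref{C3} all hold simultaneously; this is precisely where the combinatorics of the nerve and the metric inequalities $2\varepsilon_1 + \varepsilon_2 \le \delta$ and the $4\delta$-separation must be balanced against one another.
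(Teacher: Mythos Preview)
Your diagnosis is correct: the naive $f$ that sends a vertex to the link containing it satisfies \ref{C1} and \ref{C3} but badly fails \ref{C2}, and the real work is to compress the long chain met by each edge down to a single step in $C$. However, your second half is a plan, not a proof, and it is missing precisely the idea the paper uses to carry it out. You propose to build $f$ by walking along edges of $G$ and advancing one step in $C$ per edge, then worry (rightly) about global well-definedness. The paper avoids this path-dependence entirely by giving an \emph{intrinsic} definition of the depth: declare a link $W \in \mathcal{U}$ to have ``type $o$'' if it contains some $\Omega(z)$ with $\omega(z)=o$, and ``type $i$'' if it contains some $\Omega(z)$ with $\omega(z)=b(i,\cdot)$ (well-defined by your own \ref{C1} estimate); then for $z \in \mathsf{V}(G)$ with $\Omega(z) \in U(\ell,j)$, set $f(z) = C(\ell, A_z)$ where $A_z$ is the number of \emph{alternations} of type along the chain in $\mathcal{U}$ from the branch element $U(\cdot,0)$ to $U(\ell,j)$. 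This depends only on the link $U(\ell,j)$, so there is nothing to check about well-definedness.

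With this definition the three properties fall out cleanly. \ref{C1} holds because equal $f$-values force the links to lie in a stretch of $\mathcal{U}$ with no alternation, hence the same type. \ref{C2} holds because the chain from $U_u$ to $U_v$ coincides with the links meeting $\Omega(uv)$, and a single edge (one transition $o \leftrightarrow b(i,\cdot)$) forces exactly one alternation along that chain; the paper isolates this as a claim and handles separately the case where the chain passes through the branch of $\mathcal{U}$. For \ref{C3} the turn hypotheses \emph{are} now essential (contrary to your remark for the naive $f$): from $f(v_1)\neq f(v_3)$ and $f(v)=f(v_2)\neq C(\cdot,0)$ one deduces that $U_v$ lies on the chain from $U_{v_1}$ to $U_{v_3}$, hence $U_v$ meets the arc $\Omega(v_1v_2 \cup v_2v_3)$; only then does the $2\varepsilon_1+\varepsilon_2 \le \delta$ estimate give $t-s<\delta$, because $f(v)=f(v_2)$ no longer implies $U_v = U_{v_2}$. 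Your ``folding'' intuition is in the right spirit, but without the alternation count you have no candidate for $f$, and your concern about path-independence shows you had not yet found one.
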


\begin{proof}
Suppose $\mathcal{U}$ is an $n$-od cover of $\Omega(G)$ of mesh less than $\varepsilon_2$.  We refer to the elements of $\mathcal{U}$ using the notation $U(\ell,j)$, $\ell \in \{1,\ldots,n\}$, $j \geq 0$, adopting the same convention as for $n$-od graphs.  For each $i \in \{0,\ldots,n\}$, let $N(i)$ denote the open $2\delta$-neighborhood of $\{b(i,t): t \in [0,1]\}$ in $\mathbb{R}^2$.

Let $\mathcal{U}^{\mathsf{V}} = \{W \in \mathcal{U}: W \cap \Omega(\mathsf{V}(G)) \neq \emptyset\}$.  Define the function $g \colon \mathcal{U}^{\mathsf{V}} \to \{o\} \cup \{0,\ldots,n\}$ as follows.  Given $W \in \mathcal{U}^{\mathsf{V}}$, let $z_W \in \mathsf{V}(G)$ be such that $\Omega(z_W) \in W$.  If $\omega(z_W) = o$, then define $g(W) = o$; and if $\omega(z_W) = b(i,\cdot)$ for some $i \in \{0,\ldots,n\}$, then define $g(W) = i$.  Clearly $g$ is well-defined, since $\varepsilon_1$ and $\varepsilon_2$ are small compared with the distances between different regions in $T_0$.

\begin{claim}
\label{claim:chain on edge}
Let $u,v \in \mathsf{V}(G)$ be adjacent vertices in $G$, with $\omega(u) = o$ and $\omega(v) = b(i,\cdot)$ for some $i \in \{0,\ldots,n\}$.  Let $W_1,\ldots,W_m$ be a chain in $\mathcal{U}$ such that $W_j \cap \Omega(uv) \neq \emptyset$ for all $j \in \{1,\ldots,m\}$.

For each $j \in \{1,\ldots,m\}$, if $W_j \in \mathcal{U}^{\mathsf{V}}$, then $g(W_j) = o$ or $g(W_j) = i$.

Furthermore, suppose $j_1,j_2 \in \{1,\ldots,m\}$ are such that $W_{j_1},W_{j_2} \in \mathcal{U}^{\mathsf{V}}$, and $j_1 < j_2$.  Then:
\begin{enumerate}[label=(\alph{*})]
\item \label{both o} if $g(W_{j_1}) = g(W_{j_2}) = o$, then $W_j \cap N(i) = \emptyset$ for all $j_1 \leq j \leq j_2$; and
\item \label{both i} if $g(W_{j_1}) = g(W_{j_2}) = i$, then $W_j \cap N(i) \neq \emptyset$ for all $j_1 \leq j \leq j_2$.
\end{enumerate}
In particular, if $W_{j_3} \in \mathcal{U}^{\mathsf{V}}$ for some $j_3$ with $j_1 \leq j_3 \leq j_2$, then in case \ref{both o} we have $g(W_{j_3}) = o$, and in case \ref{both i} we have $g(W_{j_3}) = i$.
\end{claim}

\begin{proof}[Proof of Claim \ref{claim:chain on edge}]
\renewcommand{\qedsymbol}{\textsquare (Claim \ref{claim:chain on edge})}
For the first statement, let $W \in \mathcal{U}^{\mathsf{V}}$ be such that $g(W) = i'$ for some $i' \neq i$.  Let $z_W \in \mathsf{V}(G)$ be such that $z_W \in W$, so that $\omega(z_W) = b(i',t)$ for some $t \in [0,1]$.  By assumption on $\delta$, the distance from $b(i',t)$ to each point on the arc $\pi(\Omega(uv))$ is more than $4\delta$.  Also, the distance from $\Omega(z_W)$ to $\pi(\Omega(z_W))$ is less than $\varepsilon_1$, and for any $p \in uv$, the distance from $\Omega(p)$ to $\pi(\Omega(p))$ is less than $\varepsilon_1$.  Therefore, the distance from $\Omega(z_W)$ to $\Omega(p)$ is greater than $4\delta - 2\varepsilon_1 > \varepsilon_2$.  Since the mesh of $\mathcal{U}$ is less than $\varepsilon_2$, it follows that $\Omega(p) \notin W$, meaning that $W \cap \Omega(uv) = \emptyset$.

Next, we prove \ref{both o}.  Let $j_1,j_2 \in \{1,\ldots,m\}$ be such that $W_{j_1},W_{j_2} \in \mathcal{U}^{\mathsf{V}}$ and $j_1 < j_2$, and suppose $g(W_{j_1}) = g(W_{j_2}) = o$.  Suppose for a contradiction that there exists $j_1 \leq j \leq j_2$ such that $W_j \cap N(i) \neq \emptyset$.  Since the mesh of $\mathcal{U}$ is less than $\varepsilon_2 < \delta$ and by the smallness assumption on $\delta$, it is clear that $W_j \cap W_{j_1} = \emptyset = W_j \cap W_{j_2}$ (meaning in particular that $j_1 < j < j_2$).  Let $x_1 \in W_{j_1} \cap \Omega(uv)$, let $x_2 \in W_{j_2} \cap \Omega(uv)$, and let $y$ be a point in $W$ which is on the subarc of $\Omega(uv)$ from $x_1$ to $x_2$.  Also, let $z_1,z_2 \in \mathsf{V}(G)$ be such that $\Omega(z_1) \in W_{j_1}$ and $\Omega(z_2) \in W_{j_2}$, and note that $\omega(z_1) = \omega(z_2) = o$ since $g(W_{j_1}) = g(W_{j_2}) = o$.  Since the mesh of $\mathcal{U}$ is less than $\varepsilon_2$, and $\Omega$ is a $\langle \omega,T_0,\varepsilon_1 \rangle$-embedding of $G$, we have that the distance from $\pi(x_1)$ to $o$ is less than $2\varepsilon_1 + \varepsilon_2 \leq \delta$, and likewise for $\pi(x_2)$.  On the other hand, since $W_j \cap N(i) \neq \emptyset$, we similarly have that the distance from $\pi(y)$ to $N(i)$ is less than $\varepsilon_1 + \varepsilon_2 < \delta$.  By assumption on the smallness of $\delta$, we conclude that $\pi(y)$ cannot be on the arc in $T_0$ from $\pi(x_1)$ to $\pi(x_2)$.  But this contradicts the condition that $\pi$ is a homeomorphism from the arc $\Omega(uv)$ to $\pi(\Omega(uv))$.

The proof of \ref{both i} is similar.
\end{proof}

In the case that $g$ is not already defined on the branch element $U(\cdot,0)$ of $\mathcal{U}$ (i.e.\ if $U(\cdot,0) \notin \mathcal{U}^{\mathsf{V}}$), then we extend $g$ to $\mathcal{U}^{\mathsf{V}} \cup \{U(\cdot,0)\}$ by defining $g(U(\cdot,0)) = i$ if $U(\cdot,0) \cap N(i) \neq \emptyset$ for some $i \in \{0,\ldots,n\}$, and $g(U(\cdot,0)) = o$ otherwise.  Observe that this latter definition of $g(U(\cdot,0))$ concurs with the original definition of $g(U(\cdot,0))$ in the case that $U(\cdot,0) \in \mathcal{U}^{\mathsf{V}}$.

By an \emph{alternation} in $g$ in a given chain in $\mathcal{U}$, we mean a subchain $W_1,\ldots,W_m$ such that:
\begin{enumerate}
\item $W_1,W_m \in \mathcal{U}^{\mathsf{V}} \cup \{U(\cdot,0)\}$;
\item $W_j \notin \mathcal{U}^{\mathsf{V}} \cup \{U(\cdot,0)\}$ for each $1 < j < m$; and
\item $g(W_1) \neq g(W_m)$.
\end{enumerate}

Define $f \colon \mathsf{V}(G) \to C$ as follows.  Given $z \in \mathsf{V}(G)$, let $W^z = U(\ell_z,j_z) \in \mathcal{U}^{\mathsf{V}}$ be such that $\Omega(z) \in W^z$.  Define $f(z) = C(\ell_z,A_z)$, where $A_z$ is the number of alternations in $g$ in the chain in $\mathcal{U}$ from $U(\cdot,0)$ to $W^z$.  Clearly, this function $f$ is well-defined, i.e.\ does not depend on the choice of $W^z$ (in the case that $\Omega(z)$ is in the intersection of two elements of $\mathcal{U}$).  We now prove that $f$ satisfies properties \ref{C1}, \ref{C2}, and \ref{C3} of Definition~\ref{defn:comb cover}.

\medskip
\noindent \ref{C1}.  Let $u,v \in \mathsf{V}(G)$ be such that $f(u) = f(v)$.  Let $W^u,W^v \in \mathcal{U}^{\mathsf{V}}$ be such that $u \in W^u$ and $v \in W^v$.  It follows immediately from the definition of $f$ that there are no alternations in $g$ in the chain in $\mathcal{U}$ from $W^u$ to $W^v$.  Therefore, $g(u) = g(v)$, meaning that either $\omega(u) = \omega(v) = o$, or there exists $i \in \{0,\ldots,n\}$ such that $\omega(u) = b(i,\cdot)$ and $\omega(v) = b(i,\cdot)$.

\medskip
\noindent \ref{C2}.  Let $u,v \in \mathsf{V}(G)$ be adjacent vertices.  Assume without loss of generality that $\omega(u) = o$ and $\omega(v) = b(i,\cdot)$ for some $i \in \{0,\ldots,n\}$.  Let $W^u,W^v \in \mathcal{U}^{\mathsf{V}}$ be such that $u \in W^u$ and $v \in W^v$.  Let $W_1,\ldots,W_m$ be the chain in $\mathcal{U}$ from $W^u$ to $W^v$.

We consider two cases, depending on whether $U(\cdot,0)$ is in the chain $W_1,\ldots,W_m$ or not.

\medskip
\noindent \textbf{Case 1:} $U(\cdot,0)$ is not in the chain $W_1,\ldots,W_m$.

In this case, there exists $\ell \in \{1,\ldots,n\}$ such that $W^u$ and $W^v$ are both in leg $\ell$ of $\mathcal{U}$.  This means $f(u) = C(\ell,A_u)$ and $f(v) = C(\ell,A_v)$.

Clearly there is at least one alternation in $g$ in the chain $W_1,\ldots,W_m$, since $g(W^u) = o$ and $g(W^v) = i$.  On the other hand, since $W_j \cap \Omega(uv) \neq \emptyset$ for each $j \in \{1,\ldots,m\}$, it follows from Claim~\ref{claim:chain on edge} that there is at most one alternation in $g$ in the chain $W_1,\ldots,W_m$.  Therefore, the numbers $A_u$ and $A_v$ differ by exactly one, meaning that $f(u)$ and $f(v)$ are adjacent in $C$.

\medskip
\noindent \textbf{Case 2:} $U(\cdot,0)$ is in the chain $W_1,\ldots,W_m$.

Let $j_0 \in \{1,\ldots,m\}$ be such that $U(\cdot,0) \in W_{j_0}$.

If there exists $j \in \{1,\ldots,j_0\}$ such that $W_j \in \mathcal{U}^{\mathsf{V}}$ and $g(W_j) = i$, then by Claim~\ref{claim:chain on edge} applied to the chain $W_j,\ldots,W_m$ (which contains $U(\cdot,0) = W_{j_0}$), we have that $U(\cdot,0) \cap N(i) \neq \emptyset$.  By the definition of $g$, we then have $g(U(\cdot,0)) = i$, and there are no alternations in $g$ in the chain $W_j,\ldots,W_m$.  As in the previous case, there is exactly one alternation in the chain $W_1,\ldots,W_j$.  It follows that $A_u = 1$ and $A_v = 0$, hence $f(u)$ and $f(v)$ are adjacent in $C$.

If there exists $j \in \{j_0,\ldots,m\}$ such that $W_j \in \mathcal{U}^{\mathsf{V}}$ and $g(W_j) = o$, then similarly we may argue that $A_u = 0$ and $A_v = 1$, hence $f(u)$ and $f(v)$ are adjacent in $C$.

Finally, if neither of the previous two conditions holds, we immediately have that either $A_u = 0$ and $A_v = 1$ (if $g(U(\cdot,0)) = o$), or $A_u = 1$ and $A_v = 0$ (if $g(U(\cdot,0)) = i$).  In either case, $f(u)$ and $f(v)$ are adjacent in $C$.

\medskip
\noindent \ref{C3}.  Let $v_1,v_2,v_3 \in \mathsf{V}(G)$ be consecutive vertices in $G$, let $v \in \mathsf{V}(G) \smallsetminus \{v_1,v_2,v_3\}$, and suppose that $f(v_1) \neq f(v_3)$, $f(v) = f(v_2)$ and $f(v_2) \neq C(\cdot,0)$.  Suppose also that $0 \leq s < t \leq 1$, and that for some $i \in \{0,\ldots,n\}$, $\omega(v_2) = b(i,s)$ and $\omega(v) = b(i,t)$.

For $k = 1,2,3$, let $W^{v_k} \in \mathcal{U}^{\mathsf{V}}$ be such that $\Omega(v_k) \in W^{v_k}$.  Also, let $W^v \in \mathcal{U}^{\mathsf{V}}$ be such that $\Omega(v) \in W^v$.  By \ref{C2}, we have that $f(v_1),f(v_2),f(v_3)$ are consecutive vertices in $C$, and since $f(v_2) \neq C(\cdot,0)$, they are all in one leg of $C$.  It easily follows from the definition of $f$ that both $W^{v_2}$ and $W^v$ are in the chain in $\mathcal{U}$ from $W^{v_1}$ to $W^{v_3}$.

Each element of the chain in $\mathcal{U}$ from $W^{v_1}$ to $W^{v_2}$ intersects the arc $\Omega(v_1 v_2)$, and each element of the chain in $\mathcal{U}$ from $W^{v_2}$ to $W^{v_3}$ intersects the arc $\Omega(v_2 v_3)$.  In particular, $W^v$ intersects the arc $\Omega(v_1 v_2 \cup v_2 v_3)$.  Let $p$ be a point in $W^v \cap \Omega(v_1 v_2 \cup v_2 v_3)$.  According to the geometry of $T_0$ and the assumption about the smallness of $\delta$, the closest point of $\pi(\Omega(v_1 v_2 \cup v_2 v_3))$ to $\pi(\Omega(v)) = b(i,t)$ is $\pi(\Omega(v_2)) = b(i,s)$, and the distance between these points is $t - s$.  Now
\begin{align*}
t - s &\leq \| \pi(\Omega(v)) - \pi(\Omega(p)) \| \\
&\leq \| \pi(\Omega(v)) - \Omega(v) \| + \| \Omega(v) - \Omega(p) \| + \| \pi(\Omega(p)) - \Omega(p) \| \\
&< 2\varepsilon_1 + \|\Omega(v) - \Omega(p)\|  \qquad \textrm{since $\Omega$ is a $\langle \omega,T_0,\varepsilon_1 \rangle$-embedding} \\
&< 2\varepsilon_1 + \varepsilon_2  \qquad \textrm{since the mesh of $\mathcal{U}$ is less than $\varepsilon_2$} \\
&\leq \delta .
\end{align*}

This completes the proof that $f$ is a $\delta$-combinatorial $n$-od cover for $\omega$.
\end{proof}

\section{Wrapping patterns}
\label{sec:wrapping patterns}

In this section, we undertake a detailed combinatorial study of combinatorial $n$-od covers for placement functions having certain common patterns we utilize in our constructions later, in Section~\ref{sec:Ingram ex} (and future work).

As above, let $n \geq 2$ be a fixed integer.  Throughout this section, assume that $G$ is a connected graph, $\omega \colon \mathsf{V}(G) \to \Gamma$ is a placement function, $\delta > 0$, and $f \colon \mathsf{V}(G) \to \mathsf{V}(C)$ is a $\delta$-combinatorial $n$-od cover for $\omega$.

\begin{defn}
A \emph{wrapping pattern} in $G$ is a sequence of consecutive vertices $w_0,\ldots,w_k \in \mathsf{V}(G)$, where $k$ is a positive multiple of $2(n+1)$, such that:
\begin{enumerate}[label=(\arabic{*})]
\item for each even $p=2q \in \{0,\ldots,k\}$, $\omega(w_p) = b(q',\cdot)$, where $q'$ is equal to $q$ reduced modulo $n+1$; and
\item for each odd $p=2q+1 \in \{1,\ldots,k-1\}$, $\omega(w_p) = o$.
\end{enumerate}
\end{defn}

As an example of a wrapping pattern in the case $n = 3$, consider a sequence of consecutive vertices $w_0,\ldots,w_{16}$ whose images under $\omega$, in order, are
\begin{align*}
b(0,\tfrac{1}{2}),\ & o,\ b(1,\tfrac{3}{4}),\ o,\ b(2,0),\ o,\ b(3,1),\ o,\ b(0,\tfrac{2}{5}), \\
& o,\ b(1,\tfrac{1}{2}),\ o,\ b(2,\tfrac{1}{3}),\ o,\ b(3,\tfrac{1}{2}),\ o,\ b(0,1) .
\end{align*}

\begin{defn}
A sequence of consecutive vertices $v_0,\ldots,v_{2q} \in \mathsf{V}(G)$, where $q \in \{1,\ldots,n-1\}$, is a \emph{branch-star trajectory} if:
\begin{enumerate}[label=(\arabic{*})]
\item for each even $p \in \{0,\ldots,2q\}$, $f(v_p) = C(\ell_p,1)$ for some $\ell_p \in \{1,\ldots,n\}$;
\item for each odd $p \in \{0,\ldots,2q\}$, $f(v_p) = C(\cdot,0)$; and
\item these numbers $\ell_p$ are pairwise distinct.
\end{enumerate}
\end{defn}

\begin{prop}
\label{prop:L-covered}
Suppose $w_0,\ldots,w_k$ is a wrapping pattern in $G$.  Then either:
\begin{enumerate}[label=(\alph{*})]
\item \label{L away} (Goes away from branch on leg $\ell$) for some $\ell \in \{1,\ldots,n\}$ and $j \geq 0$, $f(w_p) = C(\ell,j+p)$ for each $p \in \{0,\ldots,k\}$; or
\item \label{L toward} (Goes toward branch on leg $\ell$) for some $\ell \in \{1,\ldots,n\}$ and $j \geq k$, $f(w_p) = C(\ell,j-p)$ for each $p \in \{0,\ldots,k\}$; or
\item \label{L across} (Goes across branch from leg $\ell_1$ to leg $\ell_2$) for some distinct $\ell_1,\ell_2 \in \{1,\ldots,n\}$ and some $j_1,j_2 \geq 1$ with $j_1 + j_2 \leq k$,
\begin{enumerate}[label=(\alph{enumi}\arabic{*})]
\item $f(w_p) = C(\ell_1,j_1-p)$ for each $p \in \{0,\ldots,j_1\}$;
\item $f(w_{k-p}) = C(\ell_2,j_2-p)$ for each $p \in \{0,\ldots,j_2\}$; and
\item $w_{j_1-1},\ldots,w_{k-j_2+1}$ is a branch-star trajectory.
\end{enumerate}
\end{enumerate}
\end{prop}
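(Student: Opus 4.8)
The plan is to analyze the walk $f(w_0),f(w_1),\ldots,f(w_k)$ in the infinite $n$-od $C$ purely through its combinatorial shape, using \ref{C1} and \ref{C2}; the argument I outline does not appear to need the metric condition \ref{C3}. First I would record the basic data: writing $d_p$ for the distance in $C$ from the branch $C(\cdot,0)$ to $f(w_p)$, property \ref{C2} together with the fact that $C$ is a tree with no loops forces $|d_{p+1}-d_p|=1$ for every $p$, so $(d_p)$ is a $\pm1$ walk and, whenever $d_p\geq 1$, $f(w_p)=C(L_p,d_p)$ for a well-defined leg $L_p$. The key structural observation is a parity constraint from \ref{C1}: call $p$ a \emph{turn} if $f(w_{p-1})=f(w_{p+1})$. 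If $p$ were odd, then $w_{p-1}$ and $w_{p+1}$ would be even vertices of the wrapping pattern, lying on consecutive (hence, since $n+1\geq 3$, distinct) $\Gamma$-legs; as $f(w_{p-1})=f(w_{p+1})$, \ref{C1} would force them onto the same $\Gamma$-leg, a contradiction. Thus every turn occurs at an even index $p=2q$.

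Next I would rule out all turns except the ``teeth'' of a star. Suppose $p=2q$ is a turn that is either a local minimum with $d_p\geq 1$, a local maximum with $d_p\geq 2$, or a branch visit ($d_p=0$) with $L_{p-1}=L_{p+1}$. In each case the neighbouring odd indices $p\pm1$ are \emph{not} turns, so the walk is monotone through them, and tracing two steps out to the even indices $p\pm 2$ shows $f(w_{p-2})=f(w_{p+2})$ (the vertex two steps further along the leg, or the branch itself when $d_p=2$). But $w_{p-2}$ and $w_{p+2}$ lie on the $\Gamma$-legs $q-1$ and $q+1$ modulo $n+1$, which are distinct because $n+1\geq 3$; this contradicts \ref{C1}. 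Hence the only admissible turns are local maxima at height exactly $1$, and every branch visit is a straight-through crossing to a \emph{different} leg. This pins down the global shape of $(d_p)$: a monotone descent along one leg to the branch region, then an oscillation between heights $0$ and $1$ (a star), then a monotone ascent along another leg, with either outer portion possibly degenerate.

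From this shape the trichotomy falls out. If there is no turn and no crossing, the walk is monotone and we read off case \ref{L away} or \ref{L toward} directly. Otherwise I let the star be the maximal oscillating middle block, identify the leg $\ell_1$ and length $j_1$ of the initial descent and the leg $\ell_2$ and length $j_2$ of the final ascent, and verify (c1) and (c2) by inspection; the first and last teeth of the star sit at indices $j_1-1$ and $k-j_2+1$, which is exactly the span required for the branch-star trajectory in (c3). I would also dispatch the degenerate endpoints separately: if $f(w_0)$ or $f(w_k)$ is the branch, a short argument (a crossing would force the walk to leave the branch and never return, so no star can abut such an endpoint) shows the walk is purely monotone, landing in case \ref{L away} or \ref{L toward}.

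The main obstacle is the last point: showing the middle block is genuinely a \emph{branch-star trajectory}, i.e.\ its teeth visit pairwise distinct legs and $q\leq n-1$. Here the wrapping pattern is essential. For a star with at least two crossings, the parity analysis forces the crossings onto odd indices and the teeth onto even indices $2a,2a+2,\ldots,2a+2q$, carrying the cyclically varying $\Gamma$-legs $a,a+1,\ldots,a+q \pmod{n+1}$. Since the teeth all sit at height $1$, the contrapositive of \ref{C1} says teeth on distinct $\Gamma$-legs are sent to distinct vertices of $C$, hence to distinct legs; a counting argument---among $n+2$ consecutive $\Gamma$-values only the single pair $n+1$ apart can repeat, so $n+2$ teeth would need at least $n+1$ distinct legs of $C$, which is impossible---then yields both distinctness and $q+1\leq n$. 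The single-crossing case $q=1$ is immediate, since a straight-through crossing joins two \emph{different} legs; this is also the one harmless way the branch can be met at an even index (two even-indexed crossings being excluded by \ref{C1}). Finally, the hypothesis that $k$ is a positive multiple of $2(n+1)$ is consistent with all of this precisely because the star length $2q\leq 2(n-1)$ is strictly less than $k$, so a purely oscillating walk cannot occur and the monotone portions cannot both be empty.
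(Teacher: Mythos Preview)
Your argument is correct and follows essentially the same route as the paper's proof: both rest entirely on \ref{C1} and \ref{C2}, and the driving observation is that for each even $p$ the images $f(w_{p-2}),f(w_p),f(w_{p+2})$ are pairwise distinct (you reach this by first showing turns occur only at even indices and then eliminating all but the height-$1$ maxima via the $p\pm2$ contradiction; the paper records it directly and then does a forward case split on the direction of $f(w_0)\to f(w_1)$). One small imprecision: your counting step for the star bound is phrased around $n+2$ teeth, but what you actually need (and what the paper's pigeonhole step gives) is that already $n+1$ teeth would carry $n+1$ distinct $\Gamma$-leg labels and hence, by \ref{C1}, demand $n+1$ distinct legs of $C$; this is the observation that yields $q+1\leq n$.
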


To illustrate the possibility \ref{L across} from Proposition~\ref{prop:L-covered}, consider the above example of a wrapping pattern $w_0,\ldots,w_{16}$ in the case $n = 3$.  If the images of $w_0,\ldots,w_{16}$ under $f$, in order, are
\begin{align*}
C(&3,5),\ C(3,4),\ C(3,3),\ C(3,2), \\
& C(3,1),\ C(\cdot,0),\ C(2,1),\ C(\cdot,0),\ C(1,1) \\
& C(1,2),\ C(1,3),\ C(1,4),\ C(1,5),\ C(1,6),\ C(1,7),\ C(1,8),\ C(1,9)
\end{align*}
then this wrapping pattern satisfies condition \ref{L across} with $\ell_1 = 3$, $\ell_2 = 1$, $j_1 = 5$, and $j_2 = 9$.  See Figure~\ref{fig:goes across} for a visual depiction of this wrapping pattern example.

\begin{figure}
\begin{center}
\includegraphics{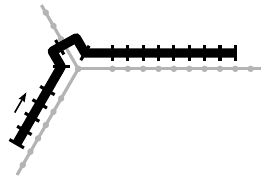}
\end{center}

\caption{Visual representation of a wrapping pattern example $w_0,\ldots,w_{16}$ in the case $n = 3$, which goes across the branch from leg $3$ (extending down and to the left) to leg $1$ (extending to the right).  The wrapping pattern is depicted as a track (direction given by the arrow), with tick marks indicating the locations of the images of the vertices, overlaid on the graph $C$, which is in grey.  Note that the spacing of vertices around the branch of $C$ is exaggerated, to make room for the turning track there.}
\label{fig:goes across}
\end{figure}

\begin{proof}
Note that by definition of a wrapping pattern,
\begin{enumerate}[label=(\arabic{section}.\arabic{thm}-\roman{*})]
\item \label{distinct} for each even $p \in \{2,\ldots,k-2\}$, $f(w_{p-2})$, $f(w_p)$, and $f(w_{p+2})$ must be three distinct vertices in $C$ (by \ref{C1}); and
\item \label{consecutive} for each even $p \in \{0,\ldots,k-2\}$, $f(w_p)$, $f(w_{p+1})$, and $f(w_{p+2})$ must be three consecutive vertices in $C$ (by \ref{C2}).
\end{enumerate}

Suppose first that, for some $\ell \in \{1,\ldots,n\}$ and $j \geq 0$, $f(w_0) = C(\ell,j)$ and $f(w_1) = C(\ell,j+1)$.  Applying \ref{distinct} and \ref{consecutive} repeatedly, it easily follows that $f(w_p) = C(\ell,j+p)$ for each $p \in \{0,\ldots,k\}$.  Thus we have alternative \ref{L away} (goes away from the branch on leg $\ell$) in this case.

Now suppose the opposite, in which case there exists $\ell_1 \in \{1,\ldots,n\}$ and $j_1 \geq 1$ such that $f(w_0) = C(\ell_1,j_1)$ and $f(w_1) = C(\ell_1,j_1-1)$.  By similar reasoning as above, we conclude that $f(w_p) = C(\ell_1,j_1-p)$ for each $p \in \{0,\ldots,\min\{j_1,k\}\}$.  In the event that $j_1 \geq k$, we thus have alternative \ref{L toward} (goes toward the branch on leg $\ell_1$) in this case.

Suppose, then, that $j_1 < k$.  Let $j_2 \geq 1$ be minimal such that $f(w_{k-j_2+1})$ is in the branch-star.  Note that since $f(w_{j_1+1})$ is in the branch-star, by minimality of $j_2$, we have $k - j_2 + 1 \geq j_1 + 1$, hence $j_1 + j_2 \leq k$.  If $j_1$ is even, then it follows from \ref{consecutive} that $f(w_{j_1+2})$ cannot be in the branch-star of $C$, and by \ref{distinct} it cannot be on leg $\ell_1$ of $C$ because $f(w_{j_1+2}) \neq f(w_{j_1-2})$.  This means that $j_2 = k - j_1$, and $w_{j_1-1},\ldots,w_{k-j_2+1} = w_{j_1-1},w_{j_1},w_{j_1+1}$ is a branch-star trajectory, with $f(w_{j_1-1}) = C(\ell_1,1)$, $f(w_{j_1}) = C(\cdot,0)$, and $f(w_{j_1+1}) = C(\ell_2,1)$ for some $\ell_2 \in \{1,\ldots,n\} \smallsetminus \{\ell_1\}$.

If $j_1$ is odd, then since $f(w_p)$ is in the branch-star for each even $p \in \{j_1-1,\ldots,k-j_2+1\}$, we have by \ref{consecutive} that $f(w_p) = C(\ell,1)$ for some $\ell \in \{1,\ldots,n\}$, and for each odd $p \in \{j_1,\ldots,k-j_2\}$, $f(w_p) = C(\cdot,0)$.  Moreover, we see that $j_2$ must be odd as well in this case, so $f(w_{k-j_2+1}) = C(\ell_2,1)$ for some $\ell_2 \in \{1,\ldots,n\}$.  It follows from \ref{C1} and the definition of a wrapping pattern, and from the pigeonhole principle, that there are at most $n$ even numbers $p$ in $\{j_1-1,\ldots,k-j_2+1\}$ (i.e.\ that $k - (j_1 + j_2) \leq 2n - 4$), and the vertices $f(w_p)$, $p \in \{j_1-1,\ldots,k-j_2+1\}$ even, are all distinct (in particular, $\ell_1 \neq \ell_2$).  Thus, $w_{j_1-1},\ldots,w_{k-j_2+1}$ is a branch-star trajectory.

By similar reasoning as above, we can show that $f(w_{k-p}) = C(\ell_2,j_2-p)$ for each $p \in \{0,\ldots,j_2\}$.  Thus, in this case, we have alternative \ref{L across} (goes across the branch from leg $\ell_1$ to leg $\ell_2$).
\end{proof}

Observe that in all cases from Proposition~\ref{prop:L-covered}, there exists a component $H_1$ of $C - f(w_0)$ such that $f(w_p) \in H_1$ for all $p \in \{1,\ldots,k\}$.  Likewise, there exists a component $H_2$ of $C - f(w_k)$ such that $f(w_p) \in H_2$ for all $p \in \{0,\ldots,k-1\}$.

\begin{defn}
Suppose $w_0,\ldots,w_k$ is a wrapping pattern in $G$.  We say that $w_0,\ldots,w_k$ \emph{meets the branch} if there exists $p \in \{0,\ldots,k\}$ such that $f(w_p) = C(\cdot,0)$.  If $w_0,\ldots,w_k$ meets the branch, we define the \emph{pre-branch-star segment} of $w_0,\ldots,w_k$ to be the (possibly empty) set $\{w_0,\ldots,w_{p-1}\}$, where $p \in \{0,\ldots,k\}$ is minimal such that $f(w_p)$ is in the branch-star.
\end{defn}

In the above example of a wrapping pattern $w_0,\ldots,w_{16}$ which goes across the branch, the pre-branch-star segment is $\{w_0,w_1,w_2,w_3\}$.

\subsection{Interaction between wrapping patterns}
\label{sec:sync wrapping patterns}

\begin{defn}
Let $\ell \in \{1,\ldots,n\}$ and $j \geq 2$.  The vertex $C(\ell,j)$ is called a \emph{reversal point} if there exist wrapping patterns $u_0,\ldots,u_{k_1}$ and $v_0,\ldots,v_{k_2}$ in $G$ such that either:
\begin{enumerate}[label=(\alph{*})]
\item (\emph{Start type}) $f(u_1) = C(\ell,j-1)$, $f(u_0) = f(v_0) = C(\ell,j)$, and $f(v_1) = C(\ell,j+1)$; or
\item (\emph{End type}) $f(u_{k_1-1}) = C(\ell,j-1)$, $f(u_{k_1}) = f(v_{k_2}) = C(\ell,j)$, and $f(v_{k_2-1}) = C(\ell,j+1)$.
\end{enumerate}

Equivalently, $C(\ell,j)$ is a start-type (respectively, end-type) reversal point if it is not in the branch-star of $C$, and there exist wrapping patterns $u_0,\ldots,u_{k_1}$ and $v_0,\ldots,v_{k_2}$ such that $f(u_0) = f(v_0) = C(\ell,j)$ (respectively, $f(u_{k_1}) = f(v_{k_2}) = C(\ell,j)$) and $f(u_1) \neq f(v_1)$ (respectively, $f(u_{k_1-1}) \neq f(v_{k_2-1})$).
\end{defn}

For an example of a reversal point, see Figure~\ref{fig:complex} below in Section~\ref{sec:wrapping complexes}; the vertex $C(1,11)$ (where leg $1$ is the one extending to the right) is a start-type reversal point, due to the wrapping patterns $\mathcal{Z}_{q_3}$ and $\mathcal{Z}_{q_4}$.

\begin{lem}
\label{lem:reversal pt}
Let $C(\ell,j)$ be a reversal point, where $\ell \in \{1,\ldots,n\}$ and $j \geq 2$, let $H_1$ and $H_2$ be the two components of $C - C(\ell,j)$, and let $w_0,\ldots,w_k$ be a wrapping pattern.
\begin{enumerate}[label=(\arabic{*})]
\item \label{no crossing} Either $f(w_p) \in H_1 \cup \{C(\ell,j)\}$ for all $p \in \{0,\ldots,k\}$, or $f(w_p) \in H_2 \cup \{C(\ell,j)\}$ for all $p \in \{0,\ldots,k\}$.
\item \label{hit reversal pt} Suppose $f(w_p) = C(\ell,j)$ for some $p \in \{0,\ldots,k\}$.  Then:
\begin{enumerate}[label=(\arabic{enumi}\alph{*})]
\item if $C(\ell,j)$ is a start-type reversal point, then $p = 0$; and
\item if $C(\ell,j)$ is an end-type reversal point, then $p = k$.
\end{enumerate}
\end{enumerate}
\end{lem}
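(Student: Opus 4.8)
The plan is to prove part \ref{hit reversal pt} first and then deduce part \ref{no crossing} from it. Throughout, recall that since $j \geq 2$ the vertex $C(\ell,j)$ has degree $2$ in $C$, with neighbours $C(\ell,j-1)$ and $C(\ell,j+1)$; say $H_2$ is the tail $\{C(\ell,m) : m > j\}$ and $H_1$ is the component containing the branch. By Proposition~\ref{prop:L-covered} every wrapping pattern is monotone, so $w_0,\ldots,w_k$ meets $C(\ell,j)$ at most once, and whenever $f(w_p)=C(\ell,j)$ the adjacent values $f(w_{p\pm1})$ lie in $\{C(\ell,j-1),C(\ell,j+1)\}$, while the two-step values (when they exist) satisfy $f(w_{p-2}),f(w_{p+2})\in\{C(\ell,j-2),C(\ell,j+2)\}$, the sign depending on whether $w$ ascends or descends through $C(\ell,j)$. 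The first thing I would record is a parity fact: by \ref{C1} all vertices of $G$ mapping to $C(\ell,j)$ share a single $\Gamma$-leg, and since a reversal witness meets $C(\ell,j)$ at position $0$ or $k_i$, each a multiple of $2(n+1)$, that common leg must be leg $0$. Hence any $w$ with $f(w_p)=C(\ell,j)$ has $p\equiv 0\pmod{2(n+1)}$. Combined with the wrapping-pattern convention this pins down the relevant $\Gamma$-legs: a vertex at position $2$ sits on leg $1$, while a vertex at position $p-2$ with $p\equiv 0$ sits on leg $n$, and $1\neq n$ because $n\geq 2$.

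The heart of the argument is a single \ref{C1} violation coming from mismatched winding. Consider the start-type case and suppose, for contradiction, that $f(w_p)=C(\ell,j)$ with $p\neq 0$; then $p\geq 2(n+1)$, so $w_{p-2}$ exists and lies on $\Gamma$-leg $n$. If $w$ ascends through $C(\ell,j)$ then $f(w_{p-2})=C(\ell,j-2)$, and I would compare $w_{p-2}$ with the position-$2$ vertex $u_2$ of the toward-witness $u$, for which $f(u_2)=C(\ell,j-2)$ by Proposition~\ref{prop:L-covered}; if instead $w$ descends, then $f(w_{p-2})=C(\ell,j+2)$ and I would use the position-$2$ vertex $v_2$ of the away-witness $v$, with $f(v_2)=C(\ell,j+2)$. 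In either case the two vertices carry equal $f$-values but lie on $\Gamma$-legs $n$ and $1$ respectively, contradicting \ref{C1}. This simultaneously rules out every interior meeting and the meeting $p=k$, so $p=0$, giving the start-type conclusion. The end-type case is symmetric: assuming $f(w_p)=C(\ell,j)$ with $p\neq k$ one has $p\leq k-2(n+1)$, and stepping two positions \emph{forward} to $w_{p+2}$ (on leg $1$) produces the same clash with the near-end witness vertices $u_{k_1-2}$ or $v_{k_2-2}$ (on leg $n$), forcing $p=k$.

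Part \ref{no crossing} then follows quickly: by \ref{hit reversal pt}, $w$ can meet $C(\ell,j)$ only at position $0$ or $k$, and by monotonicity the rest of the image $f(w_0),\ldots,f(w_k)$ is a connected walk in $C$ avoiding the cut vertex $C(\ell,j)$, hence lies in a single component of $C-C(\ell,j)$; the same holds trivially if $C(\ell,j)$ is never met. I expect the main obstacle to be careful bookkeeping rather than any deep point: one must track the $\Gamma$-leg arithmetic under the ascend/descend dichotomy in order to select the correct witness, verify that all indices used ($p-2$, $p+2$, $2$, $k_i-2$) stay in range (which is where the length hypothesis $k\geq 2(n+1)$, and the analogous bound on the witness lengths, enter), and handle the boundary value $j=2$, where $C(\ell,j-2)=C(\cdot,0)$ is the branch. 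This last case causes no trouble, since \ref{C1} equally forbids two vertices with non-origin placements on distinct $\Gamma$-legs from sharing the branch value. Finally, it is worth noting that, perhaps surprisingly, condition \ref{C3} is not needed for this lemma: the entire argument runs on \ref{C1}, \ref{C2} (through Proposition~\ref{prop:L-covered}), and the parity structure of wrapping patterns.
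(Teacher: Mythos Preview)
Your proof is correct and follows essentially the same approach as the paper's: both establish part~\ref{hit reversal pt} first by deriving a \ref{C1} contradiction from the fact that $w_{p-2}$ lies on $\Gamma$-leg $n$ while the corresponding witness vertex $u_2$ (or $v_2$) lies on $\Gamma$-leg $1$, and then deduce part~\ref{no crossing} since any crossing would force an interior hit of $C(\ell,j)$. The paper's write-up is marginally more compact in that it does not explicitly split into ascend/descend cases, but the underlying argument is identical.
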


\begin{proof}
We consider the case where $C(\ell,j)$ is a start-type reversal point; the case where $C(\ell,j)$ is an end-type reversal point is similar.  Let $u_0,\ldots,u_{k_1}$ and $v_0,\ldots,v_{k_2}$ be wrapping patterns such that $f(u_0) = f(v_0) = C(\ell,j)$ and $f(u_1) \neq f(v_1)$.

We first prove \ref{hit reversal pt}.  Suppose for a contradiction that $f(w_p) = C(\ell,j)$ for some $p \in \{1,\ldots,k\}$.  By \ref{C1} and the definition of a wrapping pattern, it follows that $p$ must be even (in fact, $p$ must be a multiple of $2(n+1)$ since $\omega(u_0) = b(0,\cdot)$).  By Proposition~\ref{prop:L-covered}, we must have either $f(w_{p-1}) = f(u_1)$ and $f(w_{p-2}) = f(u_2)$, or $f(w_{p-1}) = f(v_1)$ and $f(w_{p-2}) = f(v_2)$ (here we rely on the assumption that $j \geq 2$, so that these vertices are all on one leg of $C$).  Suppose, without loss of generality, that $f(w_{p-1}) = f(u_1)$ and $f(w_{p-2}) = f(u_2)$.  By definition of a wrapping pattern, $\omega(u_0) = b(0,\cdot)$ and $\omega(u_2) = b(1,\cdot)$.  So, by \ref{C1}, $\omega(w_p) = b(0,\cdot)$ and $\omega(w_{p-2}) = b(1,\cdot)$.  However, this contradicts the definition of a wrapping pattern, which entails that when $\omega(w_p) = b(0,\cdot)$ we have $\omega(w_{p-2}) = b(n,\cdot)$ (and $n > 1$).  This proves \ref{hit reversal pt}.

For \ref{no crossing}, we again argue by way of contradiction.  Suppose, for a contradiction, that there exist $p_1,p_2 \in \{0,\ldots,k\}$ such that $f(w_{p_1}) \in H_1 - C(\ell,j)$ and $f(w_{p_2}) \in H_2 - C(\ell,j)$.  We may assume, without loss of generality, that $p_1 < p_2$.  It follows from \ref{C2} (or Proposition~\ref{prop:L-covered}) that there exists $p$, with $p_1 < p < p_2$, such that $f(w_p) = f(u_0)$.  But this contradicts statement \ref{hit reversal pt}.
\end{proof}

\begin{lem}
\label{lem:absent branch-star}
There exists an integer $i^\ast \in \{0,\ldots,n\}$ such that for any $u \in \mathsf{V}(G)$ for which $f(u)$ is in the branch-star, we have $\omega(u) \neq b(i^\ast,\cdot)$.
\end{lem}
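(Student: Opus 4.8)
The plan is to extract from \ref{C1} a well-defined ``type'' for each vertex of $C$ lying in the image of $f$, and then to count how many distinct leg-types can occur among the vertices of the branch-star. Concretely, \ref{C1} guarantees that if $f(u) = f(v)$ then $\omega(u)$ and $\omega(v)$ agree, in the sense of both equalling $o$ or both being of the form $b(i,\cdot)$ for a common $i$. Thus I can define a function $\tau$ on $f(\mathsf{V}(G)) \subseteq \mathsf{V}(C)$ by setting $\tau(w) = o$ if $\omega(u) = o$ for some (equivalently every) $u$ with $f(u) = w$, and $\tau(w) = i$ if $\omega(u) = b(i,\cdot)$. The branch-star consists of the $n+1$ vertices $C(\cdot,0), C(1,1), \ldots, C(n,1)$, while the available leg indices $\{0,\ldots,n\}$ also number $n+1$; so the conclusion is equivalent to the assertion that at most $n$ distinct leg-types $i$ occur as values of $\tau$ on the branch-star.

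First I would record the parity structure forced by the placement function together with \ref{C2}. Whenever $x,y$ are adjacent in $G$, the placement function forces exactly one of $\omega(x),\omega(y)$ to equal $o$ and the other to be of the form $b(i,\cdot)$, while \ref{C2} forces $f(x),f(y)$ to be adjacent in $C$. Combining these, along any ``realized'' edge of $C$ (one arising as the image of a $G$-edge) one endpoint has type $o$ and the other a leg-type. To use this around the branch, I would show that whenever $C(\cdot,0)$ and a neighbor $C(\ell,1)$ both lie in the image of $f$, the edge between them is realized: given $u,v$ with $f(u) = C(\cdot,0)$ and $f(v) = C(\ell,1)$, a path in $G$ from $u$ to $v$ maps under $f$ to a walk in $C$ (consecutive images are adjacent, hence distinct, by \ref{C2}), and since $C$ is a tree this walk must traverse the edge $\{C(\cdot,0), C(\ell,1)\}$, so some consecutive pair along the path realizes it.

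With this in hand the argument splits on the type of the branch vertex. If $C(\cdot,0) \notin f(\mathsf{V}(G))$ or $\tau(C(\cdot,0)) = o$, then the branch vertex contributes no leg-type, and the at most $n$ vertices $C(\ell,1)$ contribute at most $n$ leg-types, leaving some $i^\ast \in \{0,\ldots,n\}$ unused. If instead $\tau(C(\cdot,0)) = i_0$ for a leg index $i_0$, then by the realized-edge observation every neighbor $C(\ell,1)$ lying in the image must have $\tau(C(\ell,1)) = o$; hence the only leg-type occurring on the branch-star is $i_0$, and any $i^\ast \neq i_0$ works, such an index existing since $n \geq 2$. In either case we obtain the required $i^\ast$.

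The main obstacle is precisely this second case: a naive pigeonhole over the $n+1$ branch-star vertices against the $n+1$ leg indices leaves no slack, since a priori all $n+1$ types could be distinct. Ruling this out is exactly what the parity observation accomplishes—if the branch vertex carries a leg-type, the bipartite ($o$ versus $b$) coloring forced by the placement function and \ref{C2} collapses all of its neighbors in the image to type $o$, so that at most one leg-type survives on the star.
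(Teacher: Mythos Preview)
Your proof is correct and follows essentially the same approach as the paper: both define the type function $\tau$ (the paper calls it $g$) via \ref{C1}, use connectedness of $G$ together with the tree structure of $C$ to show that any edge of $C$ with both endpoints in the image of $f$ is realized by an edge of $G$, deduce from the placement-function axiom and \ref{C2} that such a realized edge has exactly one endpoint of type $o$, and then conclude by pigeonhole that at most $n$ leg-types occur on the $(n+1)$-vertex branch-star. Your explicit case split on $\tau(C(\cdot,0))$ is a slightly more detailed unpacking of the paper's final two sentences, but the argument is the same.
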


\begin{proof}
Given a vertex in the image of $f$, say $f(u)$ where $u \in \mathsf{V}(G)$, define $g(f(u)) = o$ if $\omega(u) = o$, and $g(f(u)) = i$ if $\omega(u) = b(i,\cdot)$ for some $i \in \{0,\ldots,n\}$.  This function $g \colon f(\mathsf{V}(G)) \to \{o\} \cup \{0,\ldots,n\}$ is well-defined by \ref{C1}.  We need to show that there exists $i^\ast \in \{0,\ldots,n\}$ which is not in the image of the branch-star under $g$.

In light of \ref{C2}, we may view $f$ as a graph homomorphism; that is, we may extend $f$ over the edges of $G$ by declaring that, for any edge $\{u,v\}$ in $G$, $f$ maps $\{u,v\}$ to the edge $\{f(u),f(v)\}$ in $C$.  Since $G$ is connected, the image of $G$ under $f$ in $C$ is connected.  Moreover, since $C$ has no cycles, it follows that for any two adjacent vertices $C(\ell,j)$, $C(\ell,j+1)$ in $C$ which are each in the image of $f$, there are adjacent vertices $u,v \in \mathsf{V}(G)$ such that $f(u) = C(\ell,j)$ and $f(v) = C(\ell,j+1)$; and, since $\omega$ is a placement function, one of $g(C(\ell,j))$ or $g(C(\ell,j+1))$ must be $o$.  Thus, not every vertex in the branch-star of $C$ can map into $\{0,\ldots,n\}$ under $g$.  Therefore, since there are only $n+1$ vertices in the branch-star, there exists $i^\ast \in \{0,\ldots,n\}$ which is not in the image of the branch-star under $g$.
\end{proof}

For the remainder of this paper, whenever a $\delta$-combinatorial $n$-od cover for a placement function is considered, we let $i^\ast$ denote a fixed integer satisfying the conclusion of Lemma~\ref{lem:absent branch-star}.

\begin{defn}
Given two sequences of vertices $u_0,\ldots,u_{k_1}$ and $v_0,\ldots,v_{k_2}$ in $G$, we will say that they have the \emph{same number of $b(i^\ast,\cdot)$'s} if the number of integers $p \in \{0,\ldots,k_1\}$ with $\omega(u_p) = b(i^\ast,\cdot)$ is equal to the number of integers $p \in \{0,\ldots,k_2\}$ with $\omega(v_p) = b(i^\ast,\cdot)$.
\end{defn}

\begin{lem}
\label{lem:wrapping sync start}
Let $u_0,\ldots,u_{k_1}$ and $v_0,\ldots,v_{k_2}$ be wrapping patterns, and suppose that either:
\begin{enumerate}[label=(\Alph{*})]
\item $f(u_0) = f(v_0)$; or
\item each of $u_0,\ldots,u_{k_1}$ and $v_0,\ldots,v_{k_2}$ meet the branch, and the pre-branch-star segments of $u_0,\ldots,u_{k_1}$ and $v_0,\ldots,v_{k_2}$ have the same number of $b(i^\ast,\cdot)$'s.
\end{enumerate}
For any $p_1 \in \{0,\ldots,k_1\}$ and $p_2 \in \{0,\ldots,k_2\}$, if $f(u_{p_1}) = f(v_{p_2})$, then either $\omega(u_{p_1}) = \omega(v_{p_2}) = o$ and $f(u_{p_1}) = C(\cdot,0)$, or $p_1 = p_2$.
\end{lem}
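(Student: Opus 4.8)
The plan is to track the vertices $u_{p_1}$ and $v_{p_2}$ simultaneously along their respective wrapping patterns, using Proposition~\ref{prop:L-covered} to pin down the value of $f$ at each step, and to count the accumulated occurrences of $b(i^\ast,\cdot)$ on the legs. The key observation is that along a single leg of $C$, the vertices $C(\ell,j)$ occur at definite parities of $j$ that are governed by the wrapping pattern's $\omega$-values: since a wrapping pattern cycles through $b(0,\cdot),o,b(1,\cdot),o,\ldots,b(n,\cdot),o,b(0,\cdot),\ldots$, property \ref{C1} forces the $\omega$-value at each vertex of $G$ to be determined (modulo the leg of $C$ it lands on) by the index $j$ of $C(\ell,j)$. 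Because $i^\ast$ is chosen so that $b(i^\ast,\cdot)$ never maps into the branch-star (Lemma~\ref{lem:absent branch-star}), the vertex $C(\ell,j)$ with given $\ell$ has a \emph{fixed} $\omega$-preimage value, and crucially the parity of the ``$b(i^\ast,\cdot)$-count'' up to $C(\ell,j)$ is determined by $(\ell,j)$ together with the alignment of the wrapping pattern.

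First I would set up the counting invariant precisely. For a wrapping pattern $w_0,\ldots,w_k$ and an index $p$, let $c(p)$ denote the number of $r \in \{0,\ldots,p\}$ with $\omega(w_r) = b(i^\ast,\cdot)$. Using Proposition~\ref{prop:L-covered}, I would show that whenever $f(w_p) = C(\ell,j)$ lies on a leg (not the branch-star), the pair $\bigl(\,\omega(w_p),\ c(p) \bmod 1\text{-type}\,\bigr)$ is determined by $(\ell, j)$ and by a single ``phase'' parameter of the wrapping pattern --- namely, how the pattern's cycle $b(0,\cdot),\ldots,b(n,\cdot)$ is aligned relative to the origin element of the chain. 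In case \ref{L away} or \ref{L toward} the whole pattern stays on one leg and the correspondence $p \leftrightarrow j$ is affine, so the claim reduces to matching phases; in case \ref{L across} the pattern crosses the branch-star once, and I would use the branch-star trajectory structure together with the definition of the pre-branch-star segment to compare the two sides.

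The main argument is then a \emph{phase-matching} computation. Under hypothesis (A), $f(u_0) = f(v_0)$ gives that both patterns start at the same vertex $C(\ell,j)$, which by \ref{C1} forces $\omega(u_0) = \omega(v_0)$ (either both $o$ or both $b(i,\cdot)$ for a common $i$); this aligns the phases at the start, and the affine relationship between index and $j$-coordinate then guarantees that any later coincidence $f(u_{p_1}) = f(v_{p_2})$ off the branch-star forces $p_1 = p_2$, while a coincidence on the branch-star can only occur when both $\omega$-values are $o$. Under hypothesis (B), the phases are instead aligned by equating the number of $b(i^\ast,\cdot)$'s in the two pre-branch-star segments: since $i^\ast$ is absent from the branch-star, this count exactly records the offset by which each pattern's cycle is shifted before it first reaches the branch, so equal counts again synchronize the phases, after which the same affine argument applies.

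The hard part will be case \ref{L across} under hypothesis (B), where I must verify that the $b(i^\ast,\cdot)$-count in the pre-branch-star segment really does determine the phase unambiguously, despite the ambiguity of which leg $\ell_1$ the pattern enters from and the fact that the branch-star crossing can consume a variable number of steps (the branch-star trajectory has length between $1$ and $n-1$ half-turns). The resolution is that, within the pre-branch-star segment, each full period of the cycle contributes exactly one $b(i^\ast,\cdot)$, so the count pins down the residue of the entering index modulo $n+1$; combined with the synchronization of the crossing itself (both patterns traverse the branch-star consistently, by Proposition~\ref{prop:L-covered}), this determines the phase on the outgoing leg as well. Once the phases match on both sides of any branch-star crossing, the conclusion $p_1 = p_2$ (or the branch-star exception) follows by the same index-tracking as in the simpler cases.
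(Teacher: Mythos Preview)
Your outline has the right intuition---the cyclic structure of the wrapping pattern together with the $b(i^\ast,\cdot)$-count does control the index modulo $2(n+1)$, and this is essentially what the paper exploits---but there is a genuine gap in the plan as written. The ``phase-matching'' and ``affine relationship'' heuristics do not by themselves rule out the following scenario: both patterns go across the branch (case~\ref{L across}), and the coincidence $f(u_{p_1}) = f(v_{p_2})$ occurs on a leg $\ell$ with $u$ still in its \emph{pre}-branch-star segment (so $\ell = \ell_1(u)$, heading toward the branch) while $v$ is already in its \emph{post}-branch-star segment (so $\ell = \ell_2(v)$, heading away). In that situation the two affine maps $p \mapsto j$ have opposite signs, and no amount of phase alignment at the start or via the $b(i^\ast,\cdot)$-count forces $p_1 = p_2$; indeed, phase alone cannot even detect the direction of travel.

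The paper closes this gap with a short but essential observation (labelled (5.7-iii) in the proof): if $p_1,p_2$ are odd and $f(u_{p_1}) = f(v_{p_2}) \neq C(\cdot,0)$, then necessarily $f(u_{p_1-1}) = f(v_{p_2-1})$ and $f(u_{p_1+1}) = f(v_{p_2+1})$, i.e.\ the two patterns are going the \emph{same} direction at that vertex. The argument uses the asymmetry of the wrapping-pattern cycle: if they went opposite directions, \ref{C1} would force both $i' = i+1$ and $i' = i-1$ modulo $n+1$ for the adjacent even-index labels, which is impossible since $n \geq 2$. Once this direction-matching is in hand, the paper bounds $|p_1 - p_2|$ by $2(n+1)-2$ in each of the three location cases (pre-branch-star leg, post-branch-star leg, branch-star), using explicit inequalities on $j_1,j_2$ derived from the common value $r$ of the $b(i^\ast,\cdot)$-count; then a final observation (your phase idea, made precise) converts $|p_1 - p_2| < 2(n+1)$ together with $f(u_{p_1}) = f(v_{p_2})$ into $p_1 = p_2$. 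Your sketch would become a correct proof if you inserted the direction-matching step and replaced the informal ``phase'' language with these concrete bounds.
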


\begin{proof}
In the case that $f(u_0) = f(v_0)$ and at least one of $u_0,\ldots,u_{k_1}$ or $v_0,\ldots,v_{k_2}$ does not go across the branch to another leg of $C$, then the conclusion follows readily from Proposition~\ref{prop:L-covered}.  If $f(u_0) = f(v_0)$ and both $u_0,\ldots,u_{k_1}$ and $v_0,\ldots,v_{k_2}$ go across the branch to another leg of $C$, then the pre-branch-star segments of $u_0,\ldots,u_{k_1}$ and $v_0,\ldots,v_{k_2}$ are identical, hence have the same number of $b(i^\ast,\cdot)$'s.  Thus, it suffices to establish the conclusion of the Lemma in the case that $u_0,\ldots,u_{k_1}$ goes across the branch from some leg $\ell_1(u)$ to another leg $\ell_2(u)$, and $v_0,\ldots,v_{k_2}$ goes across the branch from some leg $\ell_1(v)$ to another leg $\ell_2(v)$, and the pre-branch-star segments of $u_0,\ldots,u_{k_1}$ and $v_0,\ldots,v_{k_2}$ have the same number of $b(i^\ast,\cdot)$'s.  Let $r \geq 0$ denote this number of $b(i^\ast,\cdot)$'s.  Let $j_1(u),j_2(u)$ (respectively, $j_1(v),j_2(v)$) be as in Proposition~\ref{prop:L-covered}\ref{L across} for the wrapping pattern $u_0,\ldots,u_{k_1}$ (respectively, $v_0,\ldots,v_{k_2}$).

We begin with several preliminary observations.  First, we establish bounds for the numbers $j_1(u)$, $j_1(v)$, $k_1 - j_1(u)$, and $k_2 - j_2(v)$, in terms of $r$.

The indices $p$ for which $\omega(u_p) = b(i^\ast,\cdot)$ are those which are congruent to $2i^\ast$ modulo $2(n+1)$.  The vertices mapped by $f$ to the branch-star are $u_p$ for $p \in \{j_1(u) - 1, \ldots, k_1 - j_2(u) + 1\}$, and for each of these, $\omega(u_p) \neq b(i^\ast,\cdot)$ (by definition of $i^\ast$).  Since $r$ is the number of $b(i^\ast,\cdot)$'s among the vertices $u_0,\ldots,u_{j_1(u)-2}$, we conclude that $2i^\ast + 2(r-1)(n+1) \leq j_1(u) - 2$, and $2i^\ast + 2r(n+1) \geq k_1 - j_2(u) + 2$.  Combining these with the inequality $j_1(u) + j_2(u) \leq k_1$, and making similar observations for the wrapping pattern $v_0,\ldots,v_{k_2}$, we have:
\begin{enumerate}[label=(\arabic{section}.\arabic{thm}-\roman{*})]
\item \label{j ineq 1} $2i^\ast + 2(r-1)(n+1) + 2 \leq j_1(u) \leq k_1 - j_2(u) \leq 2i^\ast + 2r(n+1) - 2$; and

\noindent
$2i^\ast + 2(r-1)(n+1) + 2 \leq j_1(v) \leq k_2 - j_2(v) \leq 2i^\ast + 2r(n+1) - 2$.
\end{enumerate}
Consequently,
\begin{enumerate}[resume, label=(\arabic{section}.\arabic{thm}-\roman{*})]
\item \label{j ineq 2} $|j_1(u) - j_1(v)| \leq 2(n+1) - 4$; and

\noindent
$|(k_1 - j_2(u)) - (k_2 - j_2(v))| \leq 2(n+1) - 4$.
\end{enumerate}

Our next observation is that when two wrapping patterns intersect in $C - C(\cdot,0)$ at an odd index, they must be going in the same direction.  Specifically:
\begin{enumerate}[resume, label=(\arabic{section}.\arabic{thm}-\roman{*})]
\item \label{odd direction match} Suppose $p_1 \in \{1,\ldots,k_1-1\}$ and $p_2 \in \{1,\ldots,k_2-1\}$ are odd (so that $\omega(u_{p_1}) = \omega(v_{p_2}) = o$) and $f(u_{p_1}) = f(v_{p_2}) \neq C(\cdot,0)$.  Then $f(u_{p_1-1}) = f(v_{p_2-1})$ and $f(u_{p_1+1}) = f(v_{p_2+1})$.
\end{enumerate}
Indeed, suppose for a contradiction that this does not hold.  By Proposition~\ref{prop:L-covered}, the only other possibility is $f(u_{p_1-1}) = f(v_{p_2+1})$ and $f(u_{p_1+1}) = f(v_{p_2-1})$.  By \ref{C1}, this means that, for some $i,i' \in \{0,\ldots,n\}$, we have $\omega(u_{p_1-1}) = b(i,\cdot)$ and $\omega(v_{p_2+1}) = b(i,\cdot)$, and $\omega(u_{p_1+1}) = b(i',\cdot)$ and $\omega(v_{p_2-1}) = b(i',\cdot)$.  By definition of a wrapping pattern, this would mean that both $i' = i+1$ and $i' = i-1$ (modulo $n+1$), a contradiction (since $n > 1$).

Next, let $p_1 \in \{0,\ldots,k_1\}$ and $p_2 \in \{0,\ldots,k_2\}$ be such that $|p_1 - p_2| < 2(n+1)$.  It follows from the definition of a wrapping pattern that if $p_1$ and $p_2$ are even, and $\omega(u_{p_1}) = b(i,\cdot)$ and $\omega(v_{p_2}) = b(i,\cdot)$, for some $i \in \{0,\ldots,n\}$ (in particular, if $f(u_{p_1}) = f(v_{p_2})$), then $p_1 = p_2$.  Suppose, now, that $p_1$ and $p_2$ are odd, and that $f(u_{p_1}) = f(v_{p_2}) \neq C(\cdot,0)$.  Combining \ref{odd direction match} and the previous observation applied to $p_1-1$ and $p_2-1$, which are even, we deduce again that $p_1 = p_2$.  Altogether, we have:
\begin{enumerate}[resume, label=(\arabic{section}.\arabic{thm}-\roman{*})]
\item \label{close match} Suppose $p_1 \in \{0,\ldots,k_1\}$ and $p_2 \in \{0,\ldots,k_2\}$ are such that $|p_1 - p_2| < 2(n+1)$ and $f(u_{p_1}) = f(v_{p_2})$.  Then either $\omega(u_{p_1}) = \omega(v_{p_2}) = o$ and $f(u_{p_1}) = C(\cdot,0)$, or $p_1 = p_2$.
\end{enumerate}

We are now ready to establish the conclusion of the Lemma.  Let $p_1 \in \{0,\ldots,k_1\}$ and $p_2 \in \{0,\ldots,k_2\}$, and suppose $f(u_{p_1}) = f(v_{p_2})$.  There are three cases to consider, and by \ref{close match} it suffices to prove that $|p_1 - p_2| < 2(n+1)$ in each case.

\medskip
\noindent \textbf{Case 1:} $f(u_{p_1})$ is on leg $\ell_1(u)$ and not in the branch-star, meaning $0 \leq p_1 \leq j_1(u) - 2$.

Let $\ell = \ell_1(u)$.  If $p_2 \geq k_2 - j_2(v) + 2$, so that $\ell_2(v) = \ell$, then by Proposition~\ref{prop:L-covered} we have $f(u_{p_1+1}) = f(v_{p_2-1})$, and one easily obtains a contradiction with \ref{odd direction match}.  Therefore, we must have $0 \leq p_2 \leq j_1(v) - 2$, and $\ell_1(v) = \ell$.

Now $f(u_{p_1}) = C(\ell,j_1(u) - p_1)$ and $f(v_{p_2}) = C(\ell,j_1(v) - p_2)$, so $j_1(u) - p_1 = j_1(v) - p_2$.  Therefore, by \ref{j ineq 2},
\[ |p_1 - p_2| = |j_1(u) - j_1(v)| < 2(n+1) .\]

\medskip
\noindent \textbf{Case 2:} $f(u_{p_1})$ is on leg $\ell_2(u)$ and not in the branch-star, meaning $k_1 - j_2(u) + 2 \leq p_1 \leq k_1$.

Let $\ell' = \ell_2(u)$.  Similar to the previous case, if $p_2 \leq j_1(v) - 2$, so that $\ell_1(v) = \ell'$, then by Proposition~\ref{prop:L-covered} we have $f(u_{p_1-1}) = f(v_{p_2+1})$, and one easily obtains a contradiction with \ref{odd direction match}.  Therefore, we must have $k_2 - j_2(v) + 2 \leq p_2 \leq k_2$, and $\ell_2(v) = \ell'$.

Now $f(u_{p_1}) = C(\ell',j_2(u) - k_1 + p_1)$ and $f(v_{p_2}) = C(\ell',j_2(v) - k_2 + p_2)$, so $j_2(u) - k_1 + p_1 = j_2(v) - k_2 + p_2$.  Therefore, by \ref{j ineq 2},
\[ |p_1 - p_2| = |(k_1 - j_2(u)) - (k_2 - j_2(v))| < 2(n+1) .\]

\medskip
\noindent \textbf{Case 3:} $f(u_{p_1})$ is in the branch-star, meaning $j_1(u) - 1 \leq p_1 \leq k_1 - j_2(u) + 1$, and $j_1(v) - 1 \leq p_2 \leq k_2 - j_2(v) + 1$.

Combining these inequalities with \ref{j ineq 1}, we have:
\[ 2i^\ast + 2(r-1)(n+1) + 1 \leq p_1,p_2 \leq 2i^\ast + 2r(n+1) - 1 ,\]
which means that
\[ |p_1 - p_2| \leq 2(n+1) - 2 < 2(n+1) .\]
\end{proof}

\begin{lem}
\label{lem:wrapping sync end}
Let $u_0,\ldots,u_k$ and $v_0,\ldots,v_k$ be wrapping patterns of the same length, and suppose that $f(u_k) = f(v_k)$.
\begin{enumerate}[label=(\arabic{*})]
\item \label{sync end 1} For any $p_1,p_2 \in \{0,\ldots,k\}$, if $f(u_{p_1}) = f(v_{p_2})$, then either $\omega(u_{p_1}) = \omega(v_{p_2}) = o$ and $f(u_{p_1}) = C(\cdot,0)$, or $p_1 = p_2$.
\item \label{sync end 2} If each of $u_0,\ldots,u_k$ and $v_0,\ldots,v_k$ meet the branch, then the pre-branch-star segments of $u_0,\ldots,u_k$ and $v_0,\ldots,v_k$ have the same number of $b(i^\ast,\cdot)$'s.
\end{enumerate}
\end{lem}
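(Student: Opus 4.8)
The plan is to locate the common endpoint, prove part~\ref{sync end 2} by a counting argument, and then derive part~\ref{sync end 1} from part~\ref{sync end 2} together with Lemma~\ref{lem:wrapping sync start}. First I would note that, since $k$ is a multiple of $2(n+1)$, both $\omega(u_k)$ and $\omega(v_k)$ equal $b(0,\cdot)$; hence by \ref{C1} the common vertex $f(u_k) = f(v_k)$ is not the branch, and I write it as $C(\ell,j)$ with $\ell \in \{1,\ldots,n\}$ and $j \geq 1$. Using Proposition~\ref{prop:L-covered} I would record the possible shapes of each pattern: a pattern of type~\ref{L toward} never meets the branch here (its endpoint lies strictly inside leg $\ell$); a pattern of type~\ref{L away} meets the branch only if it begins there, which forces $j = k$ and $f(w_p) = C(\ell,p)$; and a pattern of type~\ref{L across} always meets the branch, exits along leg $\ell_2 = \ell$ with $j_2 = j$, and satisfies $j_1 + j_2 \leq k$, so that $j = j_2 \leq k-1$.

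For part~\ref{sync end 2}, assume both patterns meet the branch. If either is of type~\ref{L away} beginning at the branch then $j = k$; the other cannot be of type~\ref{L across} (that would require $j_2 = j = k$, contradicting $j_2 \leq k-1$), so it too begins at the branch and both pre-branch-star segments are empty. Otherwise both are of type~\ref{L across}, so $\ell_2(u) = \ell_2(v) = \ell$ and $j_2(u) = j_2(v) = j$, and the index sets on which $f$ lands in the branch-star, namely $\{j_1(u)-1,\ldots,k-j+1\}$ and $\{j_1(v)-1,\ldots,k-j+1\}$, share the right endpoint $K = k-j+1$. By the defining property of $i^\ast$ (Lemma~\ref{lem:absent branch-star}), neither set contains an index congruent to $2i^\ast$ modulo $2(n+1)$. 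Letting $Q$ be the largest integer with $Q \equiv 2i^\ast \pmod{2(n+1)}$ and $Q \leq K$ (if there is none, both counts below are zero), this avoidance forces $Q \leq j_1(u)-2$ and $Q \leq j_1(v)-2$, while $j_1(u)-1, j_1(v)-1 \leq K < Q + 2(n+1)$ traps both $j_1(u)-2$ and $j_1(v)-2$ in the window $[Q, Q+2(n+1))$. Hence the number of indices $\equiv 2i^\ast$ in $\{0,\ldots,j_1(u)-2\}$ equals the number of such indices in $\{0,\ldots,Q\}$, and likewise for $v$; these two counts coincide, so the pre-branch-star segments have the same number of $b(i^\ast,\cdot)$'s.

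For part~\ref{sync end 1}, I would split according to how many patterns meet the branch. If both do, part~\ref{sync end 2} supplies hypothesis~(B) of Lemma~\ref{lem:wrapping sync start}, whose conclusion is exactly the assertion of part~\ref{sync end 1}. If neither meets the branch, both traverse leg $\ell$ monotonically by Proposition~\ref{prop:L-covered}, ending at $C(\ell,j)$ at index $k$; when they run in the same direction they satisfy $f(u_p) = f(v_p)$ for every $p$, and when they run in opposite directions their position ranges on leg $\ell$ overlap only at $C(\ell,j)$, so in either case $f(u_{p_1}) = f(v_{p_2})$ forces $p_1 = p_2$. If exactly one meets the branch, the other stays on leg $\ell$ at positions $\geq 1$, while the branch-meeting pattern occupies leg $\ell$ only on its terminal stretch (positions $1,\ldots,j$ at indices $k-j+1,\ldots,k$) together with the branch and, in the type~\ref{L across} case, another leg $\ell_1 \neq \ell$; these overlap only at $C(\ell,j)$ (index $k$ for both), and no coincidence at the branch is possible since the non-branch pattern avoids $C(\cdot,0)$, so again $f(u_{p_1}) = f(v_{p_2})$ forces $p_1 = p_2$.

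The main obstacle is part~\ref{sync end 2}: because $j_1(u)$ and $j_1(v)$ need not be equal, one cannot compare the pre-branch-star segments directly. The resolving observation is that the two branch-star index ranges share the right endpoint $K = k - j + 1$ and both avoid the residue class $2i^\ast \pmod{2(n+1)}$; this pins both turning indices into the same period window just beyond the last residue point $Q \leq K$, which makes the two $b(i^\ast,\cdot)$-counts agree automatically. Once part~\ref{sync end 2} is established, part~\ref{sync end 1} is mostly a matter of invoking Lemma~\ref{lem:wrapping sync start} and dispatching the elementary single-leg cases via Proposition~\ref{prop:L-covered}.
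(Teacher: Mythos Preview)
Your proof is correct and follows the same overall strategy as the paper: establish part~\ref{sync end 2} first, then deduce part~\ref{sync end 1} from it via Lemma~\ref{lem:wrapping sync start}, handling the remaining single-leg cases directly with Proposition~\ref{prop:L-covered}.

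The one place where you and the paper diverge is the argument for part~\ref{sync end 2} in the case where both patterns go across the branch. You argue by pinning $j_1(u)-2$ and $j_1(v)-2$ into the same residue window $[Q, Q+2(n+1))$ using the $i^\ast$-avoidance of the branch-star index ranges. The paper's argument is shorter: since $f(u_k)=f(v_k)$, one has $\ell_2(u)=\ell_2(v)$ and $j_2(u)=j_2(v)$, so the \emph{post}-branch-star index sets $\{k-j_2+2,\ldots,k\}$ are literally the same for $u$ and $v$; hence they contain the same number of $b(i^\ast,\cdot)$'s. As the branch-star part contains none (by choice of $i^\ast$) and the total number over $\{0,\ldots,k\}$ is the same (same length $k$), subtraction gives equal counts on the pre-branch-star segments. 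Your window argument works, but the paper's subtraction avoids tracking $Q$ and the edge cases altogether. Your casework for part~\ref{sync end 1} is also somewhat more explicit than the paper's, which simply remarks that when at least one pattern does not go across the branch, both conclusions follow readily from Proposition~\ref{prop:L-covered}.
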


\begin{proof}
As with the proof of Lemma~\ref{lem:wrapping sync start}, if at least one of $u_0,\ldots,u_k$ or $v_0,\ldots,v_k$ does not go across the branch to another leg of $C$, then \ref{sync end 1} and \ref{sync end 2} follow readily from Proposition~\ref{prop:L-covered}.  Suppose, then, that $u_0,\ldots,u_k$ goes across the branch from some leg $\ell_1(u)$ to another leg $\ell_2(u)$, and $v_0,\ldots,v_k$ goes across the branch from some leg $\ell_1(v)$ to another leg $\ell_2(v)$.  Let $j_1(u),j_2(u)$ (respectively, $j_1(v),j_2(v)$) be as in Proposition~\ref{prop:L-covered}\ref{L across} for the wrapping pattern $u_0,\ldots,u_k$ (respectively, $v_0,\ldots,v_k$).

Since $f(u_k) = f(v_k)$, we see that $\ell_2(u) = \ell_2(v)$ and $j_2(u) = j_2(v)$, and in fact (by Proposition~\ref{prop:L-covered}) $f(u_{k-p}) = f(v_{k-p})$ for each $p \in \{0,\ldots,j_2(u)\}$.  Since the wrapping patterns $u_0,\ldots,u_k$ and $v_0,\ldots,v_k$ have the same length, it follows that the pre-branch-star segments of $u_0,\ldots,u_k$ and $v_0,\ldots,v_k$ have the same number of $b(i^\ast,\cdot)$'s.  This proves statement \ref{sync end 2}.  Statement \ref{sync end 1} now follows from Lemma~\ref{lem:wrapping sync start}.
\end{proof}

\subsection{Wrapping complexes}
\label{sec:wrapping complexes}

The placement functions we will encounter in Section~\ref{sec:Ingram ex} below will contain several wrapping patterns which are connected to one another in a way that is captured by the following Definition.

\begin{defn}
\label{defn:complex}
A \emph{wrapping complex} in $G$, under $f$, is a collection
\[ \left\{ \mathcal{Z}_q = \{z_0^{(q)},\ldots,z_{k(q)}^{(q)}\}: q \in \mathsf{V}(Q) \right\} \]
of wrapping patterns in $G$, indexed by the vertices of a connected graph $Q$, such that there is an edge between $q,q' \in \mathsf{V}(Q)$ if and only if either:
\begin{enumerate}[label=(\alph{*})]
\item $f \left( z_0^{(q)} \right) = f \left( z_0^{(q')} \right)$, or
\item $f \left( z_{k(q)}^{(q)} \right) = f \left( z_{k(q')}^{(q')} \right)$ and $k(q) = k(q')$.
\end{enumerate}
\end{defn}

An example of a wrapping complex is depicted in Figure~\ref{fig:complex}.

\begin{figure}
\begin{center}
\includegraphics{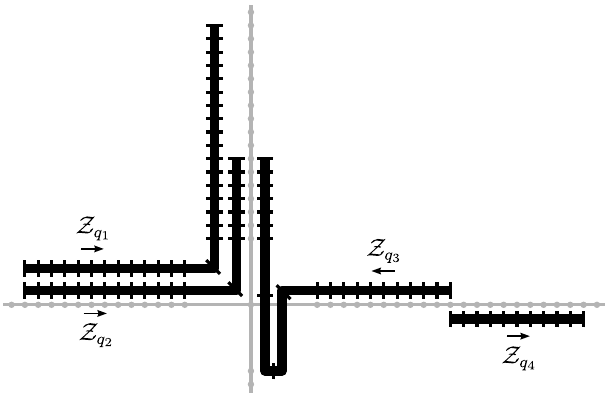}
\end{center}

\caption{Visual representation of a wrapping complex example with four wrapping patterns $\mathcal{Z}_{q_1},\mathcal{Z}_{q_2},\mathcal{Z}_{q_3},\mathcal{Z}_{q_4}$, in the case $n = 4$.  Here $k(q_1) = 30$, $k(q_2) = k(q_3) = 20$, and $k(q_4) = 10$.  We use the same conventions as in Figure~\ref{fig:goes across} to indicate the directions of the wrapping patterns and the locations of the images of the vertices.  Note that the spacing of vertices around the branch of $C$ is exaggerated, to make room for the turning tracks there.}
\label{fig:complex}
\end{figure}

\begin{prop}
\label{prop:complex sync}
Let $\left\{ \mathcal{Z}_q = \{z_0^{(q)},\ldots,z_{k(q)}^{(q)}\}: q \in \mathsf{V}(Q) \right\}$ be a wrapping complex in $G$, and let $q_1,q_2 \in \mathsf{V}(Q)$.  Then:
\begin{enumerate}[label=(\arabic{*})]
\item \label{complex sync 1} For any $p_1 \in \{0,\ldots,k(q_1)\}$ and $p_2 \in \{0,\ldots,k(q_2)\}$, if $f \left( z_{p_1}^{(q_1)} \right) = f \left( z_{p_2}^{(q_2)} \right)$, then either $\omega \left( z_{p_1}^{(q_1)} \right) = \omega \left( z_{p_2}^{(q_2)} \right) = o$ and $f \left( z_{p_1}^{(q_1)} \right) = C(\cdot,0)$, or $p_1 = p_2$.
\item \label{complex sync 2} If each of $\mathcal{Z}_{q_1}$ and $\mathcal{Z}_{q_2}$ meet the branch, then the pre-branch-star segments of $\mathcal{Z}_{q_1}$ and of $\mathcal{Z}_{q_2}$ have the same number of $b(i^\ast,\cdot)$'s.
\end{enumerate}
Furthermore, suppose $C(\ell,j)$ is a reversal point, where $\ell \in \{1,\ldots,n\}$ and $j \geq 2$, and let $H_1$ and $H_2$ be the two components of $C - C(\ell,j)$.  Then:
\begin{enumerate}[resume, label=(\arabic{*})]
\item \label{complex sync 3} If $Q_1$ (respectively, $Q_2$) is the subgraph of $Q$ generated by the set of all vertices $q \in \mathsf{V}(Q)$ such that $f(\mathcal{Z}_q) \subset H_1 \cup \{C(\ell,j)\}$ (respectively, $f(\mathcal{Z}_q) \subset H_2 \cup \{C(\ell,j)\}$), then $\mathsf{V}(Q) = \mathsf{V}(Q_1) \cup \mathsf{V}(Q_2)$, and $\{\mathcal{Z}_q: q \in \mathsf{V}(Q_1)\}$ and $\{\mathcal{Z}_q: q \in \mathsf{V}(Q_2)\}$ are wrapping complexes.
\end{enumerate}
\end{prop}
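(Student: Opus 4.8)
The plan is to deduce all three statements from the pairwise synchronization Lemmas~\ref{lem:wrapping sync start} and~\ref{lem:wrapping sync end} together with the connectedness of the index graph $Q$, treating \ref{complex sync 1}--\ref{complex sync 2} as a propagation argument and \ref{complex sync 3} as a rerouting argument built on Lemma~\ref{lem:reversal pt} (and on \ref{complex sync 1}). First I would reduce \ref{complex sync 1} to \ref{complex sync 2}. If at least one of $\mathcal{Z}_{q_1},\mathcal{Z}_{q_2}$ fails to meet the branch, then by Proposition~\ref{prop:L-covered} that pattern lies on a single leg of $C$, and the conclusion of \ref{complex sync 1} is immediate, exactly as in the opening reductions of the proofs of Lemmas~\ref{lem:wrapping sync start} and~\ref{lem:wrapping sync end}. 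If both meet the branch, then \ref{complex sync 2} supplies equality of the pre-branch-star $b(i^\ast,\cdot)$-counts, whereupon Lemma~\ref{lem:wrapping sync start} (under its second hypothesis) yields the synchronization in \ref{complex sync 1}. Thus the real content of \ref{complex sync 1}--\ref{complex sync 2} is concentrated in establishing the count equality \ref{complex sync 2} for every pair $q_1,q_2$ of branch-meeting vertices.

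For \ref{complex sync 2} I would argue by propagation along $Q$. Across a single edge $\{q,q'\}$ with both endpoints meeting the branch, the counts agree at once: if the edge is of type (b) this is precisely Lemma~\ref{lem:wrapping sync end}\ref{sync end 2}, and if it is of type (a) the shared start point forces the two patterns to descend toward the branch in lockstep, so they reach the branch-star at the same index and hence have equal pre-branch-star $b(i^\ast,\cdot)$-counts (the argument in the first paragraph of the proof of Lemma~\ref{lem:wrapping sync start}). Since $Q$ is connected, I would fix a path from $q_1$ to $q_2$ and chain these equalities, organized as a single induction on $Q$-distance whose hypothesis asserts both synchronization with $q_1$ and count equality whenever both patterns meet the branch. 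The main obstacle is that an intermediate vertex of the path may fail to meet the branch, so that its count is undefined and the naive chain of equalities breaks; note that synchronization alone is not transitive, so one genuinely needs the count as the bridging invariant. I expect this to be the hard part. Its resolution should exploit that a non-branch-meeting vertex $q'$ adjacent (via type (a)) to a branch-meeting neighbor descending from the common start $C(\ell,c)$ exhibits $C(\ell,c)$ as a reversal point, so that Lemma~\ref{lem:reversal pt} can be used to transfer the confinement and index information across $q'$ and keep the count constant along the path.

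Finally, for \ref{complex sync 3}, Lemma~\ref{lem:reversal pt}\ref{no crossing} shows that every $\mathcal{Z}_q$ is contained either in $H_1\cup\{C(\ell,j)\}$ or in $H_2\cup\{C(\ell,j)\}$, which gives $\mathsf{V}(Q)=\mathsf{V}(Q_1)\cup\mathsf{V}(Q_2)$ immediately; a pattern cannot lie in both, since it would then be confined to the single vertex $C(\ell,j)$, which is impossible for a wrapping pattern. As $Q_1$ and $Q_2$ are induced subgraphs, their edges are exactly the type-(a)/(b) coincidences among their own vertices, so each restricted family is a wrapping complex as soon as its index graph is connected; the crux is therefore connectedness of $Q_1$ (and symmetrically $Q_2$). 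For this I would analyze the edges of $Q$ joining $Q_1$ to $Q_2$: the defining coincidence of such an edge lies in $(H_1\cup\{C(\ell,j)\})\cap(H_2\cup\{C(\ell,j)\})=\{C(\ell,j)\}$, hence occurs at $C(\ell,j)$. By Lemma~\ref{lem:reversal pt}\ref{hit reversal pt}, a pattern meeting $C(\ell,j)$ does so at index $0$ when $C(\ell,j)$ is of start type and at index $k$ when it is of end type, and these types are mutually exclusive (a point of both types would be met at index $0$ and at index $k$, forcing $k=0$). Consequently, if $C(\ell,j)$ is of start type then every crossing edge is of type (a) with shared start $C(\ell,j)$, while if it is of end type then every crossing edge is of type (b) with shared end $C(\ell,j)$.

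Given any path in $Q$ between two vertices of $Q_1$, each maximal excursion into $Q_2$ then has its entry vertex $a$ and exit vertex $c$ (both in $Q_1$) sharing the reversal point: in the start-type case $f(z_0^{(a)})=f(z_0^{(c)})=C(\ell,j)$, so $\{a,c\}$ is a type-(a) edge of $Q_1$; in the end-type case both $a$ and $c$ end at $C(\ell,j)$, and since $C(\ell,j)\neq C(\cdot,0)$, statement \ref{complex sync 1} applied to $a$ and $c$ forces $k(a)=k(c)$, so $\{a,c\}$ is a type-(b) edge of $Q_1$. In either case I would reroute the path along this shortcut edge (or delete the excursion when $a=c$), producing a path inside $Q_1$; repeating over all excursions proves $Q_1$ connected, and the same argument applies to $Q_2$.
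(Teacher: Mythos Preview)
Your treatment of \ref{complex sync 3} is sound and essentially matches the paper's argument (the paper phrases the rerouting as a contraction of $Q_2$ to a single vertex, but the content is the same). The real problem is in your handling of \ref{complex sync 1}--\ref{complex sync 2}.

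The reduction ``if at least one of $\mathcal{Z}_{q_1},\mathcal{Z}_{q_2}$ fails to meet the branch, then \ref{complex sync 1} is immediate'' is incorrect. The opening reductions in Lemmas~\ref{lem:wrapping sync start} and~\ref{lem:wrapping sync end} are not free: they rely on the hypotheses of those lemmas (a shared start, or a shared end of the same length). For two arbitrary $q_1,q_2\in\mathsf{V}(Q)$ there is no such direct link, and without one the conclusion fails: two wrapping patterns that both ``go away'' on the same leg with starting heights differing by a multiple of $2(n+1)$ would satisfy \ref{C1} everywhere yet have $f\bigl(z_{p_1}^{(q_1)}\bigr)=f\bigl(z_{p_2}^{(q_2)}\bigr)$ with $p_1\neq p_2$. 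So this case, too, must be proved through the connectedness of $Q$, not dismissed. Relatedly, your path-propagation sketch for \ref{complex sync 2} acknowledges but does not resolve the obstacle of non-branch-meeting intermediate vertices; you only discuss type-(a) neighbours of such a vertex, and only one of the two possible directions there, leaving type-(b) edges and the ``goes toward but stops short'' situation unaddressed.

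The paper sidesteps both difficulties by running a single induction on $|\mathsf{V}(Q)|$ in which \ref{complex sync 1}, \ref{complex sync 2}, and \ref{complex sync 3} are proved together. In the inductive step, \ref{complex sync 3} is established first (using the inductive hypothesis \ref{complex sync 1} on a strictly smaller complex to match the lengths in the end-type case). Then for \ref{complex sync 1}--\ref{complex sync 2} the argument splits: if every $\mathcal{Z}_q$ meets the branch, one removes a vertex keeping $Q$ connected and chains the pairwise lemmas across the deleted edge; if some $\mathcal{Z}_{q^\ast}$ does not meet the branch, one chooses $q^\ast$ with $k(q^\ast)$ minimal and shows either that a reversal point appears (whence \ref{complex sync 3} splits the complex into two smaller ones) or that $\mathcal{Z}_{q^\ast}$ is duplicated by a neighbour, so $Q-q^\ast$ is still a wrapping complex. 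This minimality device is exactly what is missing from your sketch and is what lets the induction absorb the non-branch-meeting patterns cleanly.
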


\begin{proof}
We proceed by induction on $|\mathsf{V}(Q)|$.  In the case that $|\mathsf{V}(Q)| = 1$, \ref{complex sync 1} follows from Proposition~\ref{prop:L-covered}, \ref{complex sync 2} is vacuous, and \ref{complex sync 3} follows from Lemma~\ref{lem:reversal pt}\ref{no crossing}.

Suppose now that $|\mathsf{V}(Q)| > 1$, and suppose for induction that \ref{complex sync 1}, \ref{complex sync 2}, and \ref{complex sync 3}  hold for any wrapping complex with fewer than $|\mathsf{V}(Q)|$ wrapping patterns.

We first argue \ref{complex sync 3}.  That $\mathsf{V}(Q) = \mathsf{V}(Q_1) \cup \mathsf{V}(Q_2)$ is clear from Lemma~\ref{lem:reversal pt}\ref{no crossing}.  To prove that $\{\mathcal{Z}_q: q \in \mathsf{V}(Q_1)\}$ and $\{\mathcal{Z}_q: q \in \mathsf{V}(Q_2)\}$ are wrapping complexes, we need only show that the subgraphs $Q_1$ and $Q_2$ are connected.  Clearly, we may assume $Q_1$ and $Q_2$ are both non-empty.  Since $Q$ is connected, there must exist $q_1 \in \mathsf{V}(Q_1)$ and $q_2 \in \mathsf{V}(Q_2)$ which are joined by an edge in $Q$.  We consider two cases, depending on the type of the reversal point.

First, suppose that $C(\ell,j)$ is a start-type reversal point.  In this case, by Lemma~\ref{lem:reversal pt}\ref{hit reversal pt} we must have that $f \left( z_0^{(q_1)} \right) = f \left( z_0^{(q_2)} \right) = C(\ell,j)$, and in fact, if $q \in \mathsf{V}(Q_1)$ is any vertex which is connected to a vertex in $Q_2$ by an edge in $Q$, then $f \left( z_0^{(q)} \right) = C(\ell,j)$, which means that $q$ and $q_1$ are joined by an edge in $Q_1$.  Consequently, the graph $Q_1$ is obtained from $Q$ by contracting the subgraph $Q_2$, together with the edge $\{q_1,q_2\}$, to the vertex $q_1$, hence it is connected.  Similarly, we may argue that $Q_2$ is connected.

Now suppose that $C(\ell,j)$ is an end-type reversal point.  In this case, by Lemma~\ref{lem:reversal pt}\ref{hit reversal pt} we must have that $f \left( z_{k(q_1)}^{(q_1)} \right) = f \left( z_{k(q_2)}^{(q_2)} \right) = C(\ell,j)$ and $k(q_1) = k(q_2)$.  Let $k$ denote this number $k(q_1)$.  Let $q \in \mathsf{V}(Q_1)$ be any vertex which is connected to a vertex in $Q_2$ by an edge in $Q$, which again, by Lemma~\ref{lem:reversal pt}\ref{hit reversal pt}, means that $f \left( z_{k(q)}^{(q)} \right) = C(\ell,j)$.  Choose a vertex $q_0 \in \mathsf{V}(Q) \smallsetminus \{q\}$ for which the subgraph $Q - q_0$ is connected.  Note that at least one of $q_1$ or $q_2$ is in $\mathsf{V}(Q - q_0)$.  Therefore, since $f \left( z_{k(q)}^{(q)} \right) = f \left( z_{k(q_1)}^{(q_1)} \right) = f \left( z_{k(q_2)}^{(q_2)} \right)$, we conclude by inductive hypothesis \ref{complex sync 1} that $k(q) = k$.  Therefore, $q$ and $q_1$ are joined by an edge in $Q_1$.  Consequently, as in the previous case, the graph $Q_1$ is obtained from $Q$ by contracting the subgraph $Q_2$, together with the edge $\{q_1,q_2\}$, to the vertex $q_1$, hence it is connected.  Similarly, we may argue that $Q_2$ is connected.  This completes the proof of \ref{complex sync 3}.

For \ref{complex sync 1} and \ref{complex sync 2}, we consider two cases.

\medskip
\noindent \textbf{Case 1:} Each wrapping pattern $\mathcal{Z}_q$, $q \in \mathsf{V}(Q)$, meets the branch.

Let $q_0 \in \mathsf{V}(Q)$ be such that $Q - q_0$ is connected, and let $q' \in \mathsf{V}(Q) \smallsetminus \{q_0\}$ be a vertex which is connected by an edge to $q_0$ in $Q$.  If $f \left( z_0^{(q_0)} \right) = f \left( z_0^{(q')} \right)$, then clearly the pre-branch-star segments of $\mathcal{Z}_{q_0}$ and $\mathcal{Z}_{q'}$ coincide, hence both have the same number of $b(i^\ast,\cdot)$'s.  If $f \left( z_{k(q_0)}^{(q_0)} \right) = f \left( z_{k(q')}^{(q')} \right)$ and $k(q_0) = k(q')$, then the pre-branch-star segments of $\mathcal{Z}_{q_0}$ and $\mathcal{Z}_{q'}$ have the same number of $b(i^\ast,\cdot)$'s by Lemma~\ref{lem:wrapping sync end}\ref{sync end 2}.  Since \ref{complex sync 2} holds for the wrapping complex $\{\mathcal{Z}_q: q \in \mathsf{V}(Q - q_0)\}$ by induction, it follows that \ref{complex sync 2} holds for $\{\mathcal{Z}_q: q \in \mathsf{V}(Q)\}$ as well.  Conclusion \ref{complex sync 1} now follows from \ref{complex sync 2} and Lemma~\ref{lem:wrapping sync start}.

\medskip
\noindent \textbf{Case 2:} There is at least one $q \in \mathsf{V}(Q)$ such that $\mathcal{Z}_q$ does not meet the branch.

Choose $q^\ast \in \mathsf{V}(Q)$ to be such an element $q$ for which $k(q)$ is minimal.  Let $q' \in \mathsf{V}(Q) \smallsetminus \{q^\ast\}$ be a vertex which is connected by an edge in $Q$ to $q^\ast$.  We must consider two subcases, depending on the nature of this edge.

\medskip
\noindent \textbf{Subcase 2.1:} $f \left( z_0^{(q^\ast)} \right) = f \left( z_0^{(q')} \right)$.

If $f \left( z_1^{(q^\ast)} \right) \neq f \left( z_1^{(q')} \right)$, then $f \left( z_0^{(q^\ast)} \right)$ is a start-type reversal point.  By \ref{complex sync 3}, $\{\mathcal{Z}_q: q \in \mathsf{V}(Q)\}$ splits into two wrapping complexes, each with fewer than $|\mathsf{V}(Q)|$ wrapping patterns.  Therefore, by induction applied to each, and since the only vertex in common between their images under $f$ is $f \left( z_0^{(q^\ast)} \right)$, we have that \ref{complex sync 1} and \ref{complex sync 2} hold.

Suppose, then, that $f \left( z_1^{(q^\ast)} \right) = f \left( z_1^{(q')} \right)$.  By minimality of $k(q^\ast)$, it follows that $k(q') \geq k(q^\ast)$, and by Proposition~\ref{prop:L-covered},
\begin{enumerate}[label=($\ast$)]
\item \label{qast q' match} $f \left( z_p^{(q^\ast)} \right) = f \left( z_p^{(q')} \right)$ for each $p \in \{0,\ldots,k(q^\ast)\}$.
\end{enumerate}
We claim that $\{\mathcal{Z}_q: q \in \mathsf{V}(Q - q^\ast)\}$ is a wrapping complex.  Indeed, let $q \in \mathsf{V}(Q) \smallsetminus \{q^\ast\}$ be a vertex which is connected by an edge to $q^\ast$ in $Q$.  If $f \left( z_0^{(q)} \right) = f \left( z_0^{(q^\ast)} \right)$, then immediately we have $f \left( z_0^{(q)} \right) = f \left( z_0^{(q')} \right)$.  Suppose $f \left( z_{k(q)}^{(q)} \right) = f \left( z_{k(q^\ast)}^{(q^\ast)} \right)$ and $k(q) = k(q^\ast)$.  If $k(q') = k(q^\ast)$, then $f \left( z_{k(q)}^{(q)} \right) = f \left( z_{k(q')}^{(q')} \right)$ and $k(q) = k(q')$.  If $k(q') > k(q^\ast)$, then we must have $f \left( z_{k(q)-1}^{(q)} \right) = f \left( z_{k(q^\ast)-1}^{(q^\ast)} \right)$, for if not, then $f \left( z_{k(q^\ast)}^{(q^\ast)} \right)$ would be a reversal point, and $f \left( z_{k(q^\ast)}^{(q')} \right) = f \left( z_{k(q^\ast)}^{(q^\ast)} \right)$ by \ref{qast q' match}, a contradiction with Lemma~\ref{lem:reversal pt}\ref{hit reversal pt}.  It then follows from Proposition~\ref{prop:L-covered} that $f \left( z_0^{(q)} \right) = f \left( z_0^{(q^\ast)} \right) = f \left( z_0^{(q')} \right)$.  In any case, $q$ and $q'$ are connected by an edge in $Q - q^\ast$.  Therefore, $Q - q^\ast$ is connected, hence $\{\mathcal{Z}_q: q \in \mathsf{V}(Q - q^\ast)\}$ is a wrapping complex.  We now have \ref{complex sync 1} and \ref{complex sync 2} by induction and by \ref{qast q' match}.

\medskip
\noindent \textbf{Subcase 2.2:} $f \left( z_{k(q^\ast)}^{(q^\ast)} \right) = f \left( z_{k(q')}^{(q')} \right)$ and $k(q^\ast) = k(q')$.

If $f \left( z_{k(q^\ast)-1}^{(q^\ast)} \right) \neq f \left( z_{k(q')-1}^{(q')} \right)$, then $f \left( z_{k(q^\ast)}^{(q^\ast)} \right)$ is an end-type reversal point.  By \ref{complex sync 3}, $\{\mathcal{Z}_q: q \in \mathsf{V}(Q)\}$ splits into two wrapping complexes, each with fewer than $|\mathsf{V}(Q)|$ wrapping patterns.  Therefore, by induction applied to each, and since the only vertex in common between their images under $f$ is $f \left( z_{k(q^\ast)}^{(q^\ast)} \right)$, we have that \ref{complex sync 1} and \ref{complex sync 2} hold.

Suppose, then, that $f \left( z_{k(q^\ast)-1}^{(q^\ast)} \right) = f \left( z_{k(q')-1}^{(q')} \right)$.  Since $k(q^\ast) = k(q')$ and $\mathcal{Z}_{q^\ast}$ does not meet the branch, it follows from Proposition~\ref{prop:L-covered} that $f \left( z_p^{(q^\ast)} \right) = f \left( z_p^{(q')} \right)$ for each $p \in \{0,\ldots,k(q^\ast)\}$, i.e.\ the images of $\mathcal{Z}_{q^\ast}$ and $\mathcal{Z}_{q'}$ under $f$ are identical.  Clearly, then, $\{\mathcal{Z}_q: q \in \mathsf{V}(Q - q^\ast)\}$ is a wrapping complex, and \ref{complex sync 1} and \ref{complex sync 2} hold by induction.
\end{proof}

\section{Placement function expansion and composition of projections}
\label{sec:expansion}

To construct complex continua in the plane, our strategy will be to produce a sequence of $\Gamma$-spaces, starting with $T_0$, each one described in terms of the previous one by a placement function.  The desired continuum will be the Hausdorff limit of the sequence of graphs.  But, in order to apply Proposition~\ref{prop:cover to comb cover} to rule out the existence of a small mesh $n$-od cover of this continuum, we will need to have a description of each graph in the sequence in terms of a placement function relative to $T_0$.  To facilitate this, we develop the notion of placement function ``expansion'' in this section.

As above, let $n \geq 2$ be a fixed integer.  Throughout this section, assume that $G$ is a graph, and $\omega \colon \mathsf{V}(G) \to \Gamma$ is a placement function.

\begin{defn}
\label{defn:subdivision}
A \emph{subdivision} of $G$ is a graph $G^+$ such that:
\begin{enumerate}[label=(\arabic{*})]
\item $\mathsf{V}(G) \subseteq \mathsf{V}(G^+)$;
\item \label{edge subdivision} For each edge $\{u,v\}$ in $G$, there exists a unique sequence of consecutive vertices $z_0,\ldots,z_\kappa \in \mathsf{V}(G^+)$ in $G^+$ with $z_0 = u$ and $z_\kappa = v$; and
\item All of the vertices and edges of $G^+$ are accounted for in \ref{edge subdivision}.
\end{enumerate}
\end{defn}

For the remainder of this section, let $\chi \colon \mathsf{V}(G_\chi) \to \Gamma$ be a placement function for an $(n+1)$-od graph $G_\chi$, such that:
\begin{enumerate}[label=(\arabic{section}-\roman{*})]
\item \label{chi1} $\chi(G_\chi(\cdot,0)) \neq o$, where $G_\chi(\cdot,0)$ is the branch vertex of $G_\chi$; and
\item \label{chi2} for each endpoint $e$ of $G_\chi$, $\chi(e) = b(i(e),1)$ for some $i(e) \in \{0,\ldots,n\}$.
\end{enumerate}

The following Definition is an adaptation of the notion of a ``$\rho_N$-expansion'' from \cite[Section 4.2]{hoehn2011}, and Proposition~\ref{prop:expansion embedding} below may be compared with Proposition 14 of \cite{hoehn2011}.

\begin{defn}
\label{defn:expansion}
Let $\chi$ be as above.  The \emph{$\chi$-expansion} of $G,\omega$ is the subdivision $G^+$ of $G$ together with the placement function $\omega^+ \colon \mathsf{V}(G^+) \to \Gamma$ defined by the following property:

For any edge $\{u,v\}$ in $G$, assuming $\omega(u) = o$ and $\omega(v) = b(i,t)$, if $z_0,\ldots,z_\kappa$ are the consecutive vertices in $G^+$ as in \ref{edge subdivision} of Definition~\ref{defn:subdivision}, then:
\begin{enumerate}[label=(\arabic{*})]
\item $\kappa$ equals the number of vertices on leg $i$ of $G_\chi$ (excluding the branch vertex);
\item for each $p \in \{0,\ldots,\kappa-1\}$, $\omega^+(z_p) = \chi(G_\chi(i,p))$; and
\item for the endpoint $e = G_\chi(i,\kappa)$ of leg $i$ in $G_\chi$, if $i(e)$ is such that $\chi(e) = b(i(e),1)$, then $\omega^+(v) = b(i(e),t)$.
\end{enumerate}
\end{defn}

\begin{prop}
\label{prop:expansion embedding}
Let $\chi$ be as above.  Let $T \subset \mathbb{R}^2$ be a $\Gamma$-space with $\Gamma$-marking $\mathfrak{m}_T \colon \Gamma \to T$.  Let $\varepsilon_1,\varepsilon_2 > 0$.  Suppose:
\begin{enumerate}[label=(\roman{*})]
\item \label{Omega1} $\Omega_1$ is a $\langle \chi,T,\varepsilon_1 \rangle$-embedding of $G_\chi$ with corresponding projection $\pi_1 \colon \Omega_1(G_\chi) \to T$;
\item $\Omega_1(G_\chi)$ is a $\Gamma$-space with $\Gamma$-marking $\mathfrak{m}_1 \colon \Gamma \to \Omega_1(G_\chi)$;
\item \label{b(i,t) align} for each $i \in \{0,\ldots,n\}$ and $t \in [0,1]$, $\pi_1(\mathfrak{m}_1(b(i,t))) = \mathfrak{m}_T(b(i(e),t))$, where $e$ is the endpoint of leg $i$ of $G_\chi$ and $i(e) \in \{0,\ldots,n\}$ is such that $\chi(e) = b(i(e),1)$; and
\item \label{Omega2} $\Omega_2$ is a $\langle \omega,\Omega_1(G_\chi),\varepsilon_2 \rangle$-embedding of $G$ with corresponding projection $\pi_2 \colon \Omega_2(G) \to \Omega_1(G_\chi)$.
\end{enumerate}
Let $G^+$ together with $\omega^+ \colon \mathsf{V}(G^+) \to \Gamma$ be the $\chi$-expansion of $G,\omega$.

The vertices added to $G$ to form $G^+$ can be chosen in such a way that $\Omega_2$ is a $\langle \omega^+,T,\varepsilon_1+\varepsilon_2 \rangle$-embedding of $G^+$, with corresponding projection $\pi_1 \circ \pi_2$.
\end{prop}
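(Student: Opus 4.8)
The plan is to take the projection to be $\pi_1 \circ \pi_2 \colon \Omega_2(G) \to T$ and to verify the three conditions of Definition~\ref{defn:embedding} for $\Omega_2$ as an embedding of $G^+$, once the locations of the new vertices have been specified. Condition~(3) is immediate and independent of that choice: for any $x \in \Omega_2(G)$ we have $\pi_2(x) \in \Omega_1(G_\chi)$, so the triangle inequality gives
\[ \|x - \pi_1(\pi_2(x))\| \leq \|x - \pi_2(x)\| + \|\pi_2(x) - \pi_1(\pi_2(x))\| < \varepsilon_2 + \varepsilon_1, \]
using that $\Omega_2$ is a $\langle \omega,\Omega_1(G_\chi),\varepsilon_2 \rangle$-embedding and $\Omega_1$ is a $\langle \chi,T,\varepsilon_1 \rangle$-embedding. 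The substance of the proof is thus to place the new vertices so that conditions~(1) and~(2) hold.

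For each edge $\{u,v\}$ of $G$ with $\omega(u) = o$ and $\omega(v) = b(i,t)$, condition~(2) for $\Omega_2$ as an embedding of $G$ tells us that $\pi_2$ restricts to a homeomorphism from the arc $\Omega_2(uv)$ onto the arc $\alpha$ in $\Omega_1(G_\chi)$ joining the branch $\mathfrak{m}_1(o) = \Omega_1(G_\chi(i,0))$ to $\mathfrak{m}_1(b(i,t))$, which runs along leg $i$. I would then define the new vertices $z_1,\ldots,z_{\kappa-1}$ of the subdivided edge (with $z_0 = u$, $z_\kappa = v$) by setting $z_p = \bigl(\pi_2|_{\Omega_2(uv)}\bigr)^{-1}\bigl(\Omega_1(G_\chi(i,p))\bigr)$, so that $\pi_2(\Omega_2(z_p)) = \Omega_1(G_\chi(i,p))$.

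The step I expect to be the main obstacle is checking that these points genuinely lie on $\alpha$ and occur in the correct order, so that each sub-arc $\Omega_2(z_{p-1}z_p)$ is carried by $\pi_2$ exactly onto the edge of $\Omega_1(G_\chi)$ joining $\Omega_1(G_\chi(i,p-1))$ and $\Omega_1(G_\chi(i,p))$. The vertices $\Omega_1(G_\chi(i,0)),\ldots,\Omega_1(G_\chi(i,\kappa-1))$ lie along leg $i$ in order, while $\mathfrak{m}_1(b(i,t))$ lies on the final straight segment of leg $i$; it therefore suffices to show that this final straight segment is contained in the last edge $G_\chi(i,\kappa-1)\,G_\chi(i,\kappa)$, and in particular lies strictly beyond $\Omega_1(G_\chi(i,\kappa-1))$. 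To see this, note that by \ref{chi2} the endpoint $e = G_\chi(i,\kappa)$ satisfies $\chi(e) = b(i(e),1)$, so since $\omega$ alternates between $o$ and points $b(\cdot,\cdot)$ along adjacent vertices we get $\chi(G_\chi(i,\kappa-1)) = o$. Were $\Omega_1(G_\chi(i,\kappa-1))$ to lie on the final straight segment, say $\Omega_1(G_\chi(i,\kappa-1)) = \mathfrak{m}_1(b(i,s))$, then hypothesis \ref{b(i,t) align} and condition~(1) for $\Omega_1$ would give $\mathfrak{m}_T(o) = \pi_1(\Omega_1(G_\chi(i,\kappa-1))) = \mathfrak{m}_T(b(i(e),s))$, contradicting the fact that the branch $\mathfrak{m}_T(o)$ of $T$ does not lie on the final straight segment of a leg (Definition~\ref{defn:Gamma space}). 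Hence $\Omega_1(G_\chi(i,\kappa-1))$ precedes the whole segment $\{\mathfrak{m}_1(b(i,\tau)): \tau \in [0,1]\}$, the points $\Omega_1(G_\chi(i,0)),\ldots,\Omega_1(G_\chi(i,\kappa-1)),\mathfrak{m}_1(b(i,t))$ appear in this order along $\alpha$, and the $z_p$ are well-defined and consecutive.

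With the new vertices so placed, conditions~(1) and~(2) follow by composing the two embeddings. For condition~(1): an original vertex $u$ with $\omega(u) = o$ has $\omega^+(u) = \chi(G_\chi(i,0))$ and $\pi_1(\pi_2(\Omega_2(u))) = \pi_1(\mathfrak{m}_1(o)) = \pi_1(\Omega_1(G_\chi(i,0))) = \mathfrak{m}_T(\chi(G_\chi(i,0)))$; a new vertex $z_p$ has $\pi_1(\pi_2(\Omega_2(z_p))) = \pi_1(\Omega_1(G_\chi(i,p))) = \mathfrak{m}_T(\chi(G_\chi(i,p))) = \mathfrak{m}_T(\omega^+(z_p))$ by the defining property of $\Omega_1$; and an original vertex $v$ with $\omega(v) = b(i,t)$ has $\pi_1(\pi_2(\Omega_2(v))) = \pi_1(\mathfrak{m}_1(b(i,t))) = \mathfrak{m}_T(b(i(e),t)) = \mathfrak{m}_T(\omega^+(v))$ by \ref{b(i,t) align} and Definition~\ref{defn:expansion}. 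For condition~(2): each edge $\{z_{p-1},z_p\}$ of $G^+$ is carried by $\pi_2$ homeomorphically onto the edge of $\Omega_1(G_\chi)$ between $\Omega_1(G_\chi(i,p-1))$ and $\Omega_1(G_\chi(i,p))$ (with the final sub-arc mapped onto the portion of the last edge up to $\mathfrak{m}_1(b(i,t))$), using the ordering established above, and then $\pi_1$ carries this homeomorphically onto the arc in $T$ between $\mathfrak{m}_T(\omega^+(z_{p-1}))$ and $\mathfrak{m}_T(\omega^+(z_p))$, since $\Omega_1$ is a $\langle \chi,T,\varepsilon_1 \rangle$-embedding. Hence $\pi_1 \circ \pi_2$ restricts to a homeomorphism on each $\Omega_2(z_{p-1}z_p)$ onto the required arc, which establishes that $\Omega_2$ is a $\langle \omega^+,T,\varepsilon_1+\varepsilon_2 \rangle$-embedding of $G^+$ with projection $\pi_1 \circ \pi_2$.
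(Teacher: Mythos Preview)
Your proof is correct and follows the same approach as the paper's: place the new vertices as preimages under $\pi_2$ of the embedded vertices $\Omega_1(G_\chi(i,p))$, then verify the three conditions of Definition~\ref{defn:embedding} via composition and the triangle inequality. You are in fact more careful than the paper in justifying that the straight terminal segment of leg $i$ in $\Omega_1(G_\chi)$ lies past $\Omega_1(G_\chi(i,\kappa-1))$, a point the paper takes for granted; note a minor slip in your formula for $z_p$, which should include $\Omega_2^{-1}$ so that $z_p$ lands in $G$ rather than in $\Omega_2(G)$.
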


To help understand the idea behind Proposition~\ref{prop:expansion embedding}, consider the following example in the case $n = 3$, which is illustrated in Figure~\ref{fig:expansion}.  Let $T = T_0$, and let $G_\chi$ be the simple $4$-od graph with 3 vertices on leg 0, 5 on leg 1, 7 on leg 2, and 3 on leg 3 (including the branch vertex in each case).  The placement function $\chi \colon \mathsf{V}(G_\chi) \to \Gamma$ is defined as follows.  The images under $\chi$ of the vertices on each leg, including the branch vertex, in order, are:
\begin{align*}
\textrm{Leg 0: } & b(3,1),\ o,\ b(0,1) \\
\textrm{Leg 1: } & b(3,1),\ o,\ b(1,0),\ o,\ b(0,1) \\
\textrm{Leg 2: } & b(3,1),\ o,\ b(2,\tfrac{1}{2}),\ o,\ b(1,1),\ o,\ b(0,1) \\
\textrm{Leg 3: } & b(3,1),\ o,\ b(2,1)
\end{align*}
From the graph $G$, only one edge $\{u,v\}$ is depicted, and the placement function $\omega \colon \mathsf{V}(G) \to \Gamma$ is such that $\omega(u) = o$ and $\omega(v) = b(2,\frac{1}{3})$.  Figure~\ref{fig:expansion} shows the embedded graphs, and the vertices added to the edge $uv$ in $G$ to form $G^+$ are indicated.

\begin{figure}
\begin{center}
\includegraphics{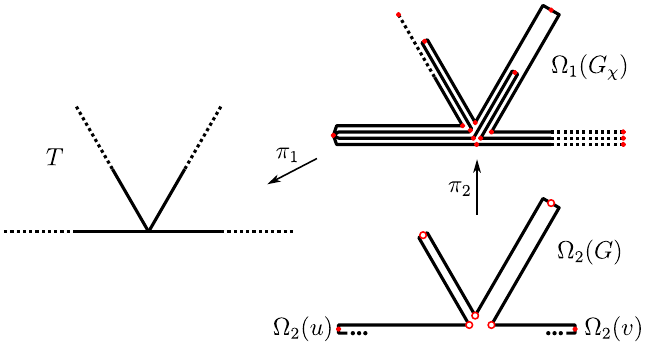}
\end{center}

\caption{An example illustrating the scenario described in Proposition~\ref{prop:expansion embedding}.  In the $\Gamma$-spaces $T$ and $\Omega_1(G_\chi)$, the sets of points marked $b(i,t)$ are represented by the dotted lines.  The images of the vertices of $G_\chi$ under $\Omega_1$ are marked with solid red dots, as are the images of the vertices $u,v \in \mathsf{V}(G)$ under $\Omega_2$.  The vertices added to the edge $uv$ in $G$ to form $G^+$ are suggested with hollow dots.  In actuality, the three pictures should be superimposed on one another, but we separate them for visual clarity.}
\label{fig:expansion}
\end{figure}

\begin{proof}
Fix any edge $\{u,v\}$ in $G$, and assume $\omega(u) = o$ and $\omega(v) = b(i,t)$.  Let $\kappa$ be the number of vertices on leg $i$ of $G_\chi$ (excluding the branch vertex).  By Definition~\ref{defn:embedding}, the restriction $\pi_2 {\restriction}_{\Omega_2(uv)}$ of $\pi_2$ to the arc $\Omega_2(uv)$ is a homeomorphism onto the arc in $\Omega_1(G_\chi)$ between $\mathfrak{m}_1(o) = \Omega_1(G_\chi(\cdot,0))$ and $\mathfrak{m}_1(b(i,t))$.  For each $p \in \{0,\ldots,\kappa-1\}$, define
\[ z_p = \left( \Omega_2^{-1} \circ (\pi_2 {\restriction}_{\Omega_2(uv)})^{-1} \circ \Omega_1 \right)(G_\chi(i,p)) ,\]
and put $\omega^+(z_p) = \chi(G_\chi(i,p))$.  Also, define $z_\kappa = v$, and put $\omega^+(v) = b(i(e),t)$, where $e = G_\chi(i,\kappa)$ is the endpoint of leg $i$ of $G_\chi$ and $i(e) \in \{0,\ldots,n\}$ is such that $\chi(e) = b(i(e),1)$.  After adding vertices $z_p$ in this manner on each edge of $G$, the resultant subdivision $G^+$, together with the function $\omega^+$, is the $\chi$-expansion of $G,\omega$.

Continuing with $u$, $v$, and $\kappa$ as above, observe that for any $p \in \{0,\ldots,\kappa-1\}$,
\[ \pi_1 \circ \pi_2(\Omega_2(z_p)) = \pi_1(\Omega_1(G_\chi(i,p))) = \mathfrak{m}_T(\chi(G_\chi(i,p))) = \mathfrak{m}_T(\omega^+(z_p)) .\]
Also,
\begin{align*}
\pi_1 \circ \pi_2(\Omega_2(z_\kappa)) &= \pi_1 \circ \pi_2(\Omega_2(v)) \\
&= \pi_1(\mathfrak{m}_1(\omega(v))) \\
&= \pi_1(\mathfrak{m}_1(b(i,t))) \\
&= \mathfrak{m}_T(b(i(e),t))  \qquad \textrm{by assumption \ref{b(i,t) align}} \\
&= \mathfrak{m}_T(\omega^+(z_p)) ,
\end{align*}
where $e$ is the endpoint of leg $i$ of $G_\chi$ and $i(e) \in \{0,\ldots,n\}$ is such that $\chi(e) = b(i(e),1)$.

The restriction of $\pi_1 \circ \pi_2$ to each arc $\Omega_2(z_p z_{p+1})$, $p \in \{0,\ldots,\kappa-1\}$, is a homeomorphism onto its image, since $\pi_2$ maps the arc $\Omega_2(z_p z_{p+1})$ one-to-one into the arc in $\Omega_1(G_\chi)$ joining $\Omega_1(G_\chi(i,p))$ and $\Omega_1(G_\chi(i,p+1))$, and the restriction of $\pi_1$ to this latter arc is a homeomorphism to the arc in $T$ joining the points $\mathfrak{m}_T(\chi(G_\chi(i,p)))$ and $\mathfrak{m}_T(\chi(G_\chi(i,p+1)))$.  Finally, for any $x \in \Omega_2(G)$,
\[ \|x - (\pi_1 \circ \pi_2)(x)\| \leq \|x - \pi_2(x)\| + \|\pi_2(x) - \pi_1(\pi_2(x))\| < \varepsilon_2 + \varepsilon_1 ,\]
by assumptions \ref{Omega1} and \ref{Omega2}.  Thus, $\Omega_2$ is a $\langle \omega^+,T,\varepsilon_1+\varepsilon_2 \rangle$-embedding of $G^+$, with corresponding projection $\pi_1 \circ \pi_2$.
\end{proof}

\begin{prop}
\label{prop:expansion cover}
Let $\chi$ be as above, and let $\delta > 0$.  Assume $G$ is connected, and let $G^+$ together with $\omega^+ \colon \mathsf{V}(G^+) \to \Gamma$ be the $\chi$-expansion of $G,\omega$.  Also, assume there exists a $\delta$-combinatorial $n$-od cover $f^+$ for $\omega^+$ such that for each $u,v \in \mathsf{V}(G^+)$ with $f^+(u) = f^+(v)$, one of the following holds:
\begin{enumerate}[label=(\Alph{*})]
\item \label{G o alone} $u,v \in \mathsf{V}(G)$ and $\omega(u) = \omega(v) = o$; or
\item \label{G b(i,.) alone} $u,v \in \mathsf{V}(G)$, and there exists $i \in \{0,\ldots,n\}$ such that $\omega(u) = b(i,\cdot)$ and $\omega(v) = b(i,\cdot)$; or
\item \label{G+ new alone} $u,v \in \mathsf{V}(G^+) \smallsetminus \mathsf{V}(G)$.
\end{enumerate}
Then there exists a $\delta$-combinatorial $n$-od cover for $\omega$.
\end{prop}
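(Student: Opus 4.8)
The plan is to read the required cover $f \colon \mathsf{V}(G) \to \mathsf{V}(C)$ off from $f^+$ by using the subdivision structure of $G^+$. Recall that each edge $\{u,v\}$ of $G$, with $\omega(u) = o$ and $\omega(v) = b(i,t)$, is replaced in $G^+$ by a path $z_0 = u, z_1, \ldots, z_\kappa = v$ whose interior vertices $z_1,\ldots,z_{\kappa-1}$ are exactly the vertices of $\mathsf{V}(G^+) \smallsetminus \mathsf{V}(G)$ introduced on this edge, and along which $\omega^+$ traces leg $i$ of $G_\chi$ (Definition~\ref{defn:expansion}). Since $\chi(G_\chi(\cdot,0)) \neq o$ by \ref{chi1} and $\chi(e) = b(i(e),1)$ by \ref{chi2}, the values $\omega^+(z_p)$ alternate between $b(\cdot,\cdot)$ at even $p$ and $o$ at odd $p$, so $\kappa$ is even. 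By \ref{C2} the sequence $f^+(z_0),\ldots,f^+(z_\kappa)$ is a walk in the tree $C$ with consecutive terms adjacent; and by the separation hypothesis, which forbids $f^+$ from identifying an old vertex with a new one, both $f^+(u)$ and $f^+(v)$ are distinct from every interior value $f^+(z_p)$, and from each other (an identification $f^+(u)=f^+(v)$ could only be case \ref{G b(i,.) alone}, forcing $\omega(u)=\omega(v)=b(i',\cdot)$, contrary to $\omega(u)=o$). Thus the $f^+$-image of each subdivided edge leaves the set of old-vertex images at its first step and rejoins it only at its last; note in particular that, since $\kappa$ is even, $f^+(u)$ and $f^+(v)$ lie at even distance in $C$, so the naive restriction $f^+ {\restriction}_{\mathsf{V}(G)}$ cannot satisfy \ref{C2} and a genuine rescaling is required.

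I would then define $f$ by the alternation-counting device used in the proof of Proposition~\ref{prop:cover to comb cover}, but carried out at the scale of the $\Gamma$-space $\Omega_1(G_\chi)$ instead of $T_0$: the ``branch region'' is taken to be the common image-location of the old $o$-vertices (all of which satisfy $\omega^+ = \chi(G_\chi(\cdot,0))$), each ``leg region'' the image-location of the old $b(i,\cdot)$-vertices, and for $x \in \mathsf{V}(G)$ the leg $\ell_x$ and the index of $f(x)$ are obtained by counting the transitions between these regions along the $G$-path from a fixed base vertex to $x$. Because one edge of $G$ produces exactly one such transition, this collapses each even-length subdivided edge to a single coarse step, which is what \ref{C2} demands. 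Property \ref{C1} then follows from the separation hypothesis, which guarantees that the only identifications $f^+$ makes among old vertices are those of cases \ref{G o alone} and \ref{G b(i,.) alone}; in particular two leg-vertices lying on distinct legs $i \neq i'$ at the $\Omega_1(G_\chi)$-scale are never conflated, even when their endpoints project to a common leg of $T_0$, so the coarse construction respects the two alternatives of \ref{C1}.

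Property \ref{C3} I expect to reduce to \ref{C3} for $f^+$: if $\omega(v_2)=b(i,s)$ and $\omega(v)=b(i,t)$ in $G$, then $\omega^+(v_2)=b(i(e),s)$ and $\omega^+(v)=b(i(e),t)$ lie on a single leg of $T_0$ with the same coordinates, so the desired bound $t-s<\delta$ follows from \ref{C3} applied to $f^+$ at the corresponding leg-endpoint vertices, the expansion having preserved the leg-coordinate. The main obstacle is \ref{C2} together with the well-definedness of the coarse index: that the transition count is independent of the chosen $G$-path, and that adjacent vertices of $G$ indeed land on adjacent vertices of $C$. This is exactly where the separation hypothesis and the ``freshness'' established in the first paragraph must be leveraged to transport the internal consistency of $f^+$ down through the collapse of the subdivision, and I expect this transport argument — rather than either of the more routine verifications of \ref{C1} and \ref{C3} — to be the technical heart of the proof.
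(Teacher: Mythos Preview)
Your outline is on the right track in identifying that $f$ should be read off from $f^+$ by collapsing the subdivision, and your reduction of \ref{C3} to \ref{C3} for $f^+$ is correct: the expansion preserves the $t$-coordinate, so $\omega^+(v_2)=b(i(e),s)$ and $\omega^+(v)=b(i(e),t)$, and the bound transfers directly.  But the construction you propose for $f$ has a genuine problem.  You want to obtain $\ell_x$ and the index of $f(x)$ by counting transitions ``along the $G$-path from a fixed base vertex to $x$.''  Along a $G$-path the $\omega$-values alternate between $o$ and $b(i,\cdot)$ on every single edge, so any such transition count is simply the $G$-distance and carries no information about where $f^+(x)$ sits in the $n$-od $C$; in particular there is no way to extract the leg $\ell_x$ from it.  And since $G$ need not be a tree, the path-independence you flag as ``the main obstacle'' is a real obstruction here, not a technicality to be cleaned up later.

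The paper's construction --- which is the correct analogue of the device in Proposition~\ref{prop:cover to comb cover} --- counts along $C$ rather than along $G$.  One sets $C^{\mathsf{V}}=f^+(\mathsf{V}(G))$, uses the separation hypothesis to define $g\colon C^{\mathsf{V}}\to\{o\}\cup\{0,\ldots,n\}$ recording $\omega(u)$, singles out a small set $S\subset C^{\mathsf{V}}$ near the branch of $C$ to serve as $f^{-1}(C(\cdot,0))$, and for $f^+(u)=C(\ell,j)\notin S$ puts $f(u)=C(\ell,A_u)$ where $A_u$ counts the elements of $C^{\mathsf{V}}\smallsetminus S$ on the $C$-path from $C(\cdot,0)$ to $C(\ell,j)$.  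This depends only on $f^+(u)$, so well-definedness is automatic; the leg is inherited directly from $f^+$; and \ref{C2} follows because (as you correctly observed in your first paragraph) no old-vertex image lies strictly between $f^+(u)$ and $f^+(v)$ when $u,v$ are $G$-adjacent.  The point is that the count must run along the unique path in the codomain tree $C$ --- exactly as in Proposition~\ref{prop:cover to comb cover} it ran along the chain in the $n$-od cover $\mathcal{U}$ --- and not along the domain graph $G$.
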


\begin{proof}
Let $C^{\mathsf{V}} = f^+(\mathsf{V}(G))$ be the set of all images under $f^+$ of the vertices from $G$ in $G^+$.  Define the function $g \colon C^{\mathsf{V}} \to \{o\} \cup \{1,\ldots,n\}$ by $g(f^+(u)) = o$ if $\omega(u) = o$, and $g(f^+(u)) = i$ if $\omega(u) = b(i,\cdot)$, for each $u \in \mathsf{V}(G)$.  By assumptions \ref{G o alone} and \ref{G b(i,.) alone}, this function $g$ is well-defined.

In order to define the $\delta$-combinatorial $n$-od cover for $\omega$, we first define the set $S \subset C^{\mathsf{V}}$, which will be equal to $f^+(f^{-1}(C(\cdot,0)))$, in two cases as follows.  If $C(\cdot,0) \in C^{\mathsf{V}}$, then $S = \{C(\cdot,0)\}$.  Otherwise, a vertex $C(\ell,j) \in C^{\mathsf{V}}$ belongs to $S$ if and only if $g(C(\ell,j)) = o$ and $C(\ell,j') \notin C^{\mathsf{V}}$ for each $0 \leq j' < j$.

Now define $f \colon \mathsf{V}(G) \to C$ as follows.  Given $u \in \mathsf{V}(G)$, if $f^+(u) \in S$, then let $f(u) = C(\cdot,0)$.  If $f^+(u) = C(\ell,j) \notin S$, then let $f(u) = C(\ell,A_u)$, where $A_u$ is the number of vertices in the set
\[ C^{\mathsf{V}} \cap \{C(\ell,j'): 0 \leq j' \leq j\} \smallsetminus S .\]

We now prove that $f$ satisfies properties \ref{C1}, \ref{C2}, and \ref{C3} of Definition~\ref{defn:comb cover}.

\medskip
\noindent \ref{C1}.  Let $u,v \in \mathsf{V}(G)$ be such that $f(u) = f(v)$.  If $f^+(u) = f^+(v)$, then by assumptions \ref{G o alone} and \ref{G b(i,.) alone}, either $\omega(u) = \omega(v) = o$, or there exists $i \in \{0,\ldots,n\}$ such that $\omega(u) = b(i,\cdot)$ and $\omega(v) = b(i,\cdot)$.  Suppose now that $f^+(u) \neq f^+(v)$.  By the definition of $f$, this can only happen if $f^+(u),f^+(v) \in S$ (so $f(u) = C(\cdot,0)$) and $S \neq \{C(\cdot,0)\}$.  In this case, by the definition of $S$, $g(f^+(u)) = g(f^+(v)) = o$, which means $\omega(u) = \omega(v) = o$.

\medskip
\noindent \ref{C2}.  Let $u,v \in \mathsf{V}(G)$ be adjacent vertices.  Assume without loss of generality that $\omega(u) = o$ and $\omega(v) = b(i,\cdot)$ for some $i \in \{0,\ldots,n\}$.  Notice that, by assumptions \ref{G o alone} and \ref{G b(i,.) alone}, $f^+(u) \neq f^+(v)$.

Let $z_0,\ldots,z_\kappa$ be the consecutive vertices in $G^+$ as in \ref{edge subdivision} of Definition~\ref{defn:subdivision}.  It follows from property \ref{C2} for $f^+$ that each vertex of $C$ strictly between $f^+(u)$ and $f^+(v)$ is the image of at least one of the vertices $z_p$, $p \in \{1,\ldots,\kappa-1\}$.  Therefore, by assumptions \ref{G o alone}, \ref{G b(i,.) alone}, and \ref{G+ new alone}, we see that no vertex of $G$ is mapped by $f^+$ strictly between $f^+(u)$ and $f^+(v)$.  It now follows from the definition of $f$ that $f(u)$ and $f(v)$ are adjacent in $C$.  We remark here that if $f^+(u)$ and $f^+(v)$ are not on the same leg of $C$, then $f^+(u) \in S$, so $f(u) = C(\cdot,0)$ and $f(v) = C(\ell,1)$ for some $\ell \in \{1,\ldots,n\}$.

\medskip
\noindent \ref{C3}.  Let $v_1,v_2,v_3 \in \mathsf{V}(G)$ be consecutive vertices in $G$, let $v \in \mathsf{V}(G) \smallsetminus \{v_1,v_2,v_3\}$, and suppose that $f(v_1) \neq f(v_3)$, $f(v) = f(v_2)$ and $f(v_2) \neq C(\cdot,0)$.  Suppose also that $0 \leq s < t \leq 1$, and that for some $i \in \{0,\ldots,n\}$, $\omega(v_2) = b(i,s)$ and $\omega(v) = b(i,t)$.

Observe that $f^+(v_2) \neq C(\cdot,0)$ since $f(v_2) \neq C(\cdot,0)$.  Also, $f^+(v) = f^+(v_2)$ since $f(v) = f(v_2)$, and by the definition of $f$.

Let $y_1,y_3 \in \mathsf{V}(G^+)$ be the vertices in $G^+$ which are adjacent to $v_2$, and such that $y_1$ is between $v_1$ and $v_2$, and $y_3$ is between $v_2$ and $v_3$ (so $y_1,v_2,y_3$ are consecutive in $G^+$).  It follows from assumptions \ref{G o alone}, \ref{G b(i,.) alone}, and \ref{G+ new alone} that $f^+(y_1)$ and $f^+(v_1)$ are in the same component of $C - f^+(v_2)$; and likewise for $f^+(y_3)$ and $f^+(v_3)$.  Given that $f$ satisfies \ref{C2}, from the assumption that $f(v_1) \neq f(v_3)$, and by the definition of $f$, it follows easily that $f^+(y_1) \neq f^+(y_3)$.

Recall from Definition~\ref{defn:expansion} that $\omega^+(v_2) = b(i(e),s)$ and $\omega^+(v) = b(i(e),t)$, where $e$ is the endpoint of leg $i$ in $G_\chi$ and $i(e)$ is such that $\chi(e) = b(i(e),1)$.  We now conclude that $t - s < \delta$ by property \ref{C3} for $f^+$.

This completes the proof that $f$ is a $\delta$-combinatorial $n$-od cover for $\omega$.
\end{proof}

\section{Generalized example of Ingram}
\label{sec:Ingram ex}

We now apply the machinery developed in the previous sections to give a new family of examples of tree-like plane continua, inspired by Ingram's example from \cite{ingram1972}.

\begin{thm}
\label{thm:Ingram ex}
For each $n \geq 3$, there exists an indecomposable arc continuum in the plane which is a simple $(n+1)$-od-like continuum but is not a simple $n$-od-like continuum.
\end{thm}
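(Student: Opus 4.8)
The plan is to construct $X$ as a Hausdorff limit (equivalently, an inverse limit under projections) of a sequence of $\Gamma$-spaces $T_0,T_1,T_2,\ldots$, each a simple $(n+1)$-od in the plane, in which $T_{m+1}$ is obtained from $T_m$ by a single fixed ``winding'' placement function $\chi$ through the expansion and embedding machinery of Sections~\ref{sec:embeddings} and~\ref{sec:expansion}. The placement function $\chi$, for an $(n+1)$-od graph $G_\chi$ satisfying conditions \ref{chi1} and~\ref{chi2}, is chosen to generalize Ingram's folding: each leg of $G_\chi$ is routed so that, traversing it, one sweeps around the branch of $T_0$ through the legs in the cyclic order $0,1,\ldots,n,0,\ldots$, thereby generating wrapping patterns. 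Choosing the embedding parameters $\varepsilon_m$ to decrease rapidly and applying Proposition~\ref{prop:expansion embedding} repeatedly, I realize $T_{m+1}$ as the image $\Omega_m(G^{(m)})$ of a $\langle \omega_m,T_0,\varepsilon_m \rangle$-embedding relative to the base $(n+1)$-od $T_0$, where $\omega_m$ is the placement function obtained by iterating the $\chi$-expansion; the limit $X := \lim_m T_m$ is then a continuum.

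First I would dispatch the two easier properties. That $X$ is \emph{simple $(n+1)$-od-like} follows because $X$ is the inverse limit of the system $(T_m,\pi_m)$ in which every factor is topologically a simple $(n+1)$-od and the bonding projections are surjective; arbitrarily fine $(n+1)$-od covers of $X$ are then obtained by pulling back and slightly thickening $(n+1)$-od covers of a sufficiently late $T_m$. That $X$ is an \emph{indecomposable arc continuum} would follow from the design of $\chi$ in the classical Ingram manner: the winding is arranged so that each leg wraps fully over $T_m$, forcing indecomposability, while every proper subcontinuum, tracked back through the bonding maps, is eventually confined within a single leg of each $T_m$ and is therefore an arc. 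These arguments are essentially topological and parallel the analysis of Ingram's original example.

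The crux is to show $X$ is \emph{not} simple $n$-od-like, and here I argue by contradiction. If it were, then for a suitably small $\varepsilon$ there is an $n$-od cover of $X$ of mesh $<\varepsilon$; transferring this cover to the nearby graph $\Omega_m(G^{(m)})$ for large $m$ and invoking Proposition~\ref{prop:cover to comb cover}, I obtain a $\delta$-combinatorial $n$-od cover $f\colon \mathsf{V}(G^{(m)})\to\mathsf{V}(C)$ for $\omega_m$, with $\delta$ as small as I like. Now the wrapping-pattern calculus of Section~\ref{sec:wrapping patterns} applies. By Lemma~\ref{lem:absent branch-star} there is a leg-direction $i^\ast\in\{0,\ldots,n\}$ of $T_0$ that never occurs at the branch-star of $C$; this pigeonhole is the heart of the obstruction, reflecting that the $n$ legs of $C$ cannot absorb the $(n+1)$-fold winding built into $X$. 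Each wrapping pattern must either run along a leg or cross the branch through a branch-star trajectory (Proposition~\ref{prop:L-covered}), and the many wrapping patterns carried by $G^{(m)}$ are organized, through the way consecutive legs are joined at the branch in the construction, into a single wrapping complex indexed by a large connected graph $Q$. Proposition~\ref{prop:complex sync} then forces all these wrapping patterns to synchronize; in particular, by Proposition~\ref{prop:complex sync}\ref{complex sync 2}, their pre-branch-star segments must all contain the same number of $b(i^\ast,\cdot)$'s.

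The main obstacle, and the point where the construction must be tuned with care, is to engineer $\chi$ so that this forced synchronization is impossible: the winding should be set up so that two wrapping patterns lying in the same complex correspond to the same arc of $X$ wound a different number of times, whence their pre-branch-star segments necessarily contain different numbers of $b(i^\ast,\cdot)$'s once $m$ is large. This contradicts Proposition~\ref{prop:complex sync}\ref{complex sync 2}, completing the proof. I expect the real labor to lie in, first, writing $\chi$ down explicitly and checking conditions \ref{chi1} and~\ref{chi2}, and second, verifying that the induced wrapping patterns assemble into the claimed complex with mismatched $b(i^\ast,\cdot)$-counts; the remaining steps are direct applications of the propositions already established.
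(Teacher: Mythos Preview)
Your construction of $X$ and the easy parts (simple $(n+1)$-od-like, indecomposable arc continuum) match the paper. The gap is in the ``not simple $n$-od-like'' argument.

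You aim to derive a contradiction directly from the combinatorial $n$-od cover $f$ of $\omega_m$ by invoking Proposition~\ref{prop:complex sync}\ref{complex sync 2}: all pre-branch-star segments in the wrapping complex must have the same number of $b(i^\ast,\cdot)$'s, and you hope to design $\chi$ so that this fails. But the pre-branch-star segment of a wrapping pattern is defined in terms of where $f$ first hits the branch-star of $C$; it depends on the unknown cover $f$, not just on $\chi$. There is no way to engineer $\chi$ in advance so that two such segments are forced to have different $b(i^\ast,\cdot)$-counts for \emph{every} putative $f$. Proposition~\ref{prop:complex sync}\ref{complex sync 2} is a structural constraint on $f$, not a quantity you can make clash with the geometry of $G^{(m)}$ in the way you describe.

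The paper takes a different route: it uses Proposition~\ref{prop:expansion cover} to \emph{descend} through the expansions. One first checks (Lemma~\ref{lem:Ingram expansion condition}) that any $\delta$-combinatorial $n$-od cover of a $\chi$-expansion satisfies the separation hypotheses \ref{G o alone}--\ref{G+ new alone} of Proposition~\ref{prop:expansion cover}; this is where Proposition~\ref{prop:complex sync}\ref{complex sync 1} is used, together with property~\ref{C3} and the precise numerical values $\chi(G_\chi(i,2q)) = b(q,1-\tfrac{i}{n})$ (with $\delta < \tfrac{1}{n}$) to force $i_1 = i_2$ at the endpoints. Iterating Proposition~\ref{prop:expansion cover} then pushes the combinatorial $n$-od cover all the way down from $\omega_N^{(1)}$ to $\chi$ itself, and the base-case contradiction (Lemma~\ref{lem:no comb cover}) is the elementary pigeonhole fact that the trivial placement of the $(n+1)$-od star has no combinatorial $n$-od cover. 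Your outline omits Proposition~\ref{prop:expansion cover} entirely and never invokes~\ref{C3} or the specific heights $1-\tfrac{i}{n}$, which are what make the descent work.
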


Before we begin the proof of Theorem~\ref{thm:Ingram ex}, we carry out some initial preparation.

\begin{defn}
\label{defn:Ingram graph-word}
We define \emph{Ingram's graph-word} as an $(n+1)$-od graph $G_\chi$ along with a placement function $\chi \colon \mathsf{V}(G_\chi) \to \Gamma$ such that, for each $i \in \{0,\ldots,n\}$, the leg $i$ contains $2n+2$ vertices (excluding the branch vertex), and:
\begin{enumerate}[label=(\arabic{*})]
\item $\chi(G_\chi(i,0)) = \chi(G_\chi(i,2n+2)) = b(0,1)$;
\item for each even $p = 2q \in \{2,\ldots,2n\}$, $\chi(G_\chi(i,p)) = b(q,1 - \frac{i}{n})$; and
\item for each odd $p = 2q+1 \in \{1,\ldots,2n+1\}$, $\chi(G_\chi(i,p)) = o$.
\end{enumerate}
\end{defn}
Observe that the vertices in leg $i$ (including the branch vertex) of Ingram's graph-word
\[ G_\chi(i,0),\ G_\chi(i,1),\ \ldots,\ G_\chi(i,2n+2) \]
are a wrapping pattern.  Note also that Ingram's graph-word satisfies the conditions \ref{chi1} and \ref{chi2} for $\chi$ which were assumed in the previous section.

The exact pattern of Ingram's example differs from ours in the $n=2$ case, in that the final two vertices on each leg, labelled $o$, $b(0,1)$ by $\chi$, are absent (and also the orientation is reversed).  We include these vertices in our example for convenience, so that a complete wrapping pattern is present on each leg, allowing us to apply the theory developed in Section~\ref{sec:wrapping patterns}.

With the following Lemma, we will be ready to apply Proposition~\ref{prop:expansion cover} later in this section.

\begin{lem}
\label{lem:Ingram expansion condition}
Let $G$ be a connected graph with placement function $\omega \colon \mathsf{V}(G) \to \Gamma$ and let $G^+$ together with $\omega^+ \colon \mathsf{V}(G^+) \to \Gamma$ be the $\chi$-expansion of $G,\omega$, where $G_\chi,\chi$ is Ingram's graph-word.  Let $0 < \delta < \frac{1}{n}$, and suppose $f$ is a $\delta$-combinatorial $n$-od cover for $\omega^+$.  For each $u,v \in \mathsf{V}(G^+)$, if $f(u) = f(v)$, then one of the following holds:
\begin{enumerate}[label=(\alph{*})]
\item $u,v \in \mathsf{V}(G)$ and $\omega(u) = \omega(v) = o$; or
\item $u,v \in \mathsf{V}(G)$, and there exists $i \in \{0,\ldots,n\}$ such that $\omega(u) = b(i,\cdot)$ and $\omega(v) = b(i,\cdot)$; or
\item $u,v \in \mathsf{V}(G^+) \smallsetminus \mathsf{V}(G)$.
\end{enumerate}
\end{lem}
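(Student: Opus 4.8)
The plan is to first unwind how the $\chi$-expansion acts on $\omega^+$, and then to recast everything in the language of wrapping patterns and wrapping complexes so that the synchronization results of Section~\ref{sec:wrapping patterns} apply. Reading off Definition~\ref{defn:expansion} for Ingram's graph-word (Definition~\ref{defn:Ingram graph-word}): each original vertex $w \in \mathsf{V}(G)$ with $\omega(w) = o$ satisfies $\omega^+(w) = b(0,1)$; each $w$ with $\omega(w) = b(i,t)$ satisfies $\omega^+(w) = b(0,t)$, since every leg of $G_\chi$ ends at $b(0,1)$; each newly added vertex of odd subdivision-index has $\omega^+ = o$; and each newly added vertex of even subdivision-index $2q$ has $\omega^+ = b(q, 1 - \tfrac{i}{n})$ with $q \in \{1,\ldots,n\}$. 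Thus the $\Gamma$-region of $\omega^+(w)$ already records whether $w$ is original (region on leg $0$) or new (region $o$, or a leg $q \geq 1$). Moreover each subdivided edge of $G$ becomes, under $\omega^+$, a wrapping pattern of length $2(n+1)$, and I would observe that the family of all these edge-patterns is a wrapping complex in the sense of Definition~\ref{defn:complex}: two edges sharing a vertex share an endpoint of the corresponding patterns (index $0$ at an $o$-type vertex, index $2(n+1)$ at a $b$-type vertex), so the index graph contains the line graph of $G$ and is therefore connected.

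With this in hand, apply \ref{C1} to the hypothesis $f(u) = f(v)$. If $\omega^+(u) = \omega^+(v) = o$, or if $\omega^+(u),\omega^+(v)$ both lie on a leg $q \in \{1,\ldots,n\}$, then $u,v$ are both new and conclusion (c) holds. The only remaining possibility is that $\omega^+(u)$ and $\omega^+(v)$ both lie on leg $0$, i.e.\ $u,v \in \mathsf{V}(G)$. Writing $u,v$ as endpoints of edge-patterns — at subdivision-index $0$ when the vertex is $o$-type and at index $2(n+1)$ when it is $b$-type — and noting $\omega^+(u),\omega^+(v) \neq o$, Proposition~\ref{prop:complex sync}\ref{complex sync 1} forces these two indices to agree. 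Hence $u$ and $v$ have the same type: either both $o$-type, giving conclusion (a), or both $b$-type, in which case it remains only to prove $\omega(u),\omega(v)$ lie on the same original leg.

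For this last step, suppose $\omega(u) = b(i,\cdot)$ and $\omega(v) = b(i',\cdot)$. The guiding principle is that every edge-pattern terminating at a $b$-type vertex $w$ on original leg $i_w$ has \emph{all} of its even vertices carrying the single parameter $1 - \tfrac{i_w}{n}$; so if two such patterns ending on a common leg of $C$ synchronize well enough to make a pair of even vertices on a common $\omega^+$-leg collide under $f$ away from $C(\cdot,0)$, then \ref{C3} bounds the difference of the two parameters by $\delta$, and since distinct legs force a difference of at least $\tfrac{1}{n} > \delta$, the legs must coincide. When $f(u) = f(v)$ is not a reversal point, the patterns $P_u, P_v$ through $u$ and $v$ approach it identically and synchronize by Lemma~\ref{lem:wrapping sync end}, producing the required colliding even vertices in their common tail, and \ref{C3} then yields $i = i'$.

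The main obstacle is the reversal-point case. Here Lemma~\ref{lem:reversal pt}\ref{no crossing} shows every edge-pattern lies entirely on one side $H_1$ or $H_2$ of the reversal point $f(u) = f(v)$, and the patterns through $u$ and through $v$ may lie on opposite sides, so $P_u$ and $P_v$ share no even vertex. I would bridge the two sides using connectedness of $G$: since $u$ and $v$ have incident edges on opposite sides, some $w \in \mathsf{V}(G)$ has incident patterns on both sides. An $o$-type vertex cannot do this, as its patterns begin off the reversal point and by Lemma~\ref{lem:reversal pt}\ref{no crossing} cannot cross it; hence $w$ is $b$-type with $f(w) = f(u) = f(v)$, and its incident patterns all share the parameter $1 - \tfrac{i_w}{n}$. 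Comparing a same-side pair of patterns through $u$ and through $w$ (which approach the reversal point identically, hence synchronize near it) reduces via \ref{C3} to $i = i_w$, and symmetrically $i' = i_w$, whence $i = i'$. The delicate technical point throughout is to guarantee that the even vertices fed into \ref{C3} are not sent to the branch vertex $C(\cdot,0)$; this is where the leg $i^\ast$ furnished by Lemma~\ref{lem:absent branch-star} — met by no vertex of the branch-star — is used, in tandem with the equal $b(i^\ast,\cdot)$-counts of pre-branch-star segments provided by Proposition~\ref{prop:complex sync}\ref{complex sync 2}.
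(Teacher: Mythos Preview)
Your overall architecture is right, and the first half is solid: you correctly identify the wrapping-complex structure on the subdivided edges, correctly invoke \ref{C1} to separate the $\omega^+$-regions, and correctly use Proposition~\ref{prop:complex sync}\ref{complex sync 1} to force the subdivision-indices to match, thereby reducing to the case where $u,v\in\mathsf{V}(G)$ are both $b$-type. The bridging idea in the reversal-point case --- finding a vertex $w$ with incident edge-patterns on both sides of the reversal point, and observing that such a $w$ must be $b$-type with $f(w)=f(u)=f(v)$ --- is also correct.

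The gap is in the final application of \ref{C3}. You need two patterns $P_1,P_2$ ending at the same vertex $C(\ell,j)$ to collide at some even index $p\in\{2,\ldots,2n\}$ with $f(z_p^{P_1})=f(z_p^{P_2})\neq C(\cdot,0)$. When both patterns approach $C(\ell,j)$ from the branch side, they are forced to agree only on indices $k-j,\ldots,k$; the sole even index from $\{2,\ldots,2n\}$ in this range is $2n$, where $f(z_{2n})=C(\ell,j-2)$. If $j=2$ this is $C(\cdot,0)$ and \ref{C3} is unavailable; beyond index $2n$ the two branch-star trajectories may turn onto different legs of $C$, so no further collision is guaranteed. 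The same obstruction arises already in your non-reversal case. Your proposed remedy via $i^\ast$ and Proposition~\ref{prop:complex sync}\ref{complex sync 2} does not close this: knowing that index $2i^\ast$ avoids the branch-star for each pattern separately does not force the two patterns to agree at index $2i^\ast$, since that index then lies in the pre-branch-star segments, which start at different $o$-type vertices and may sit at different depths on leg $\ell_1$.

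The paper resolves this with a genuinely different device. It first establishes a global dichotomy: either $j_1(q)>2$ for every crossing pattern, or $j_2(q)>2$ for every crossing pattern. This guarantees that one of the two ``probe'' indices --- index $2$ on the $\ell_1$-side when $j_1>2$, or index $2n$ on the $\ell_2$-side when $j_2>2$ --- is always safely off $C(\cdot,0)$. The argument then proceeds by induction on $|\mathsf{E}(G)|$, removing one edge at a time and proving two interlocking claims: that all crossing patterns share a single $b$-type leg $i_0$ (under the $j_1>2$ hypothesis), and the desired leg-equality for colliding $b$-type endpoints. The induction is essential because the comparison must be threaded through a chain of edges in $Q$, not just a single bridging vertex; your direct approach would need a substitute for this.
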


\begin{proof}
Because each leg of Ingram's graph-word is a wrapping pattern, the entire placement function $\omega^+ \colon \mathsf{V}(G^+) \to \Gamma$ is a wrapping complex under $f$.  Specifically, let $Q$ be the graph whose vertices are the edges of $G$, and such that two vertices of $Q$ are adjacent in $Q$ if and only if the corresponding edges in $G$ share a vertex in $G$.  For each edge $q = \{u,v\}$ in $G$, assuming $\omega(u) = o$ and $\omega(v) = b(i,\cdot)$ for some $i \in \{0,\ldots,n\}$, let $k(q) = 2n+2$, and let $z_0^{(q)},\ldots,z_{k(q)}^{(q)}$ be the consecutive vertices in $G^+$ as in \ref{edge subdivision} of Definition~\ref{defn:subdivision} (with $z_0^{(q)} = u$ and $z_{k(q)}^{(q)} = v$).  Then each vertex in $G^+$ is in one of the wrapping patterns in $\left\{ \mathcal{Z}_q = \{z_0^{(q)},\ldots,z_{k(q)}^{(q)}\}: q \in \mathsf{V}(Q) \right\}$.

Let $z_{p_1}^{(q_1)}$ and $z_{p_2}^{(q_2)}$ be two arbitrary vertices in $G^+$, and assume that $f \left( z_{p_1}^{(q_1)} \right) = f \left( z_{p_2}^{(q_2)} \right)$.  According to Proposition~\ref{prop:complex sync}\ref{complex sync 1}, either $\omega^+ \left( z_{p_1}^{(q_1)} \right) = \omega^+ \left( z_{p_2}^{(q_2)} \right) = o$ and $f \left( z_{p_1}^{(q_1)} \right) = C(\cdot,0)$, or $p_1 = p_2$.  Note that $\omega^+(u) = b(0,\cdot)$ for each $u \in \mathsf{V}(G)$, so if $\omega^+ \left( z_{p_1}^{(q_1)} \right) = \omega^+ \left( z_{p_2}^{(q_2)} \right) = o$ then $z_{p_1}^{(q_1)},z_{p_2}^{(q_2)} \in \mathsf{V}(G^+) \smallsetminus \mathsf{V}(G)$.  If $p_1 = p_2 = 0$, then $z_0^{(q_1)},z_0^{(q_2)} \in \mathsf{V}(G)$ and $\omega \left( z_0^{(q_1)} \right) = \omega \left( z_0^{(q_2)} \right) = o$.  If $p_1 = p_2 \in \{1,\ldots,2n+1\}$, then $z_0^{(q_1)},z_0^{(q_2)} \in \mathsf{V}(G^+) \smallsetminus \mathsf{V}(G)$.  If $p_1 = p_2 = 2n+2$, then $z_{2n+2}^{(q_1)},z_{2n+2}^{(q_2)} \in \mathsf{V}(G)$, and $\omega \left( z_{2n+2}^{(q_1)} \right) = b(i_1,\cdot)$ and $\omega \left( z_{2n+2}^{(q_2)} \right) = b(i_2,\cdot)$ for some $i_1,i_2 \in \{0,\ldots,n\}$.  It remains to prove that $i_1 = i_2$ in this last case.  This will be accomplished in \ref{i sync 2} below.

Towards this end, we need some preliminary considerations.  Let $Q_0$ be the subgraph of $Q$ generated by the set of all vertices $q \in \mathsf{V}(Q)$ such that the wrapping pattern $\mathcal{Z}_q$ goes across the branch.  For each $q \in \mathsf{V}(Q_0)$, let $j_1(q)$ and $j_2(q)$ be as in Proposition~\ref{prop:L-covered}\ref{L across} for $\mathcal{Z}_q$.
\begin{enumerate}[label=(\arabic{section}.\arabic{thm}-\roman{*})]
\item Either $j_1(q) > 2$ for all $q \in \mathsf{V}(Q_0)$, or $j_2(q) > 2$ for all $q \in \mathsf{V}(Q_0)$.
\end{enumerate}

Indeed, suppose $j_1(q_0) = 2$ for some $q_0 \in \mathsf{V}(Q_0)$.  This implies $f \left( z_2^{(q_0)} \right) = C(\cdot,0)$, and it immediately follows that each branch-star trajectory consists of three vertices, labelled $o$, $b(1,\cdot)$, $o$.  We conclude that $j_1(q) = 2$ and $j_2(q) = 2n > 2$ for all $q \in \mathsf{V}(Q_0)$.  Suppose, now, that $j_1(q_0) = 1$ for some $q_0 \in \mathsf{V}(Q_0)$.  In this case, $i^\ast \neq 0$, and the number of $b(i^\ast,\cdot)$'s in $\mathcal{Z}_{q_0}$ is $0$, hence the number of $b(i^\ast,\cdot)$'s in $\mathcal{Z}_q$ is $0$ for all $q \in \mathsf{V}(Q_0)$ by Proposition~\ref{prop:complex sync}\ref{complex sync 2}.  This means that for each $q \in \mathsf{V}(Q_0)$, the vertex in $\mathcal{Z}_q$ labelled $b(i^\ast,\cdot)$ comes after the branch-star trajectory, which can only happen if $j_2(q) > 2$.

We now proceed by induction on the number of edges in $G$, $|\mathsf{E}(G)| = |\mathsf{V}(Q)|$.  We will prove that:

\begin{enumerate}[resume, label=(\arabic{section}.\arabic{thm}-\roman{*})]
\item \label{i sync 1} If $j_1(q) > 2$ for all $q \in \mathsf{V}(Q_0)$, then there exists $i_0 \in \{0,\ldots,n\}$ such that $\omega \left( z_{2n+2}^{(q)} \right) = b(i_0,\cdot)$ for each $q \in \mathsf{V}(Q_0)$; and
\item \label{i sync 2} For all $q_1,q_2 \in \mathsf{V}(Q)$, if $f \left( z_{2n+2}^{(q_1)} \right) = f \left( z_{2n+2}^{(q_2)} \right)$, then there exists $i \in \{0,\ldots,n\}$ such that $\omega \left( z_{2n+2}^{(q_1)} \right) = b(i,\cdot)$ and $\omega \left( z_{2n+2}^{(q_2)} \right) = b(i,\cdot)$.
\end{enumerate}

If $|\mathsf{E}(G)| = 1$, then \ref{i sync 1} and \ref{i sync 2} are vacuous.

Now suppose $|\mathsf{E}(G)| > 1$, and that \ref{i sync 1} and \ref{i sync 2} hold for graphs with fewer than $|\mathsf{E}(G)|$ vertices.

We first prove \ref{i sync 1} for $G$.  Suppose $j_1(q') > 2$ for all $q' \in \mathsf{V}(Q_0)$.  Let $q = \{u,v\}$ be an edge in $G$ for which $G - q$ is connected, where $\omega(u) = o$, and $\omega(v) = b(i,\cdot)$ for some $i \in \{0,\ldots,n\}$ (so $u = z_0^{(q)}$ and $v = z_{2n+2}^{(q)}$).  Recall that, according to our notion of edge removal, $u \in \mathsf{V}(G - q)$ (respectively, $v \in \mathsf{V}(G - q)$) if and only if it is adjacent to another vertex other than $v$ (respectively, other than $u$) in $G$.  At least one of $u$ and $v$ belongs to $V(G - q)$.  We may assume that the wrapping pattern $\mathcal{Z}_q$ goes across the branch (otherwise there is nothing to prove, by induction), and that there is at least one other wrapping pattern in the wrapping complex which goes across the branch, i.e.\ that $\mathsf{V}(Q_0) \smallsetminus \{q\} \neq \emptyset$.  By induction, there exists $i_0 \in \{0,\ldots,n\}$ such that $\omega \left( z_{2n+2}^{(q_0)} \right) = b(i_0,\cdot)$ for each $q_0 \in \mathsf{V}(Q_0) \smallsetminus \{q\}$.  We must prove that $i = i_0$.  We consider two cases.

\medskip
\textbf{Case 1 (for \ref{i sync 1}):} $u \in \mathsf{V}(G - q)$.

Because $G - q$ is connected and there is at least one wrapping pattern in $G^+$ other than $\mathcal{Z}_q$ that goes across the branch, there must exist an edge $q' = \{u',v'\}$ in $G$ such that one of $f(u')$ or $f(v')$ is on the same leg as $f(u)$ in $C$, and $\mathcal{Z}_{q'}$ goes across the branch, i.e.\ $q' \in \mathsf{V}(Q_0) \smallsetminus \{q\}$.  From Proposition~\ref{prop:complex sync}\ref{complex sync 1} we conclude that $f(u') = f(u)$, i.e.\ that
\[ f \left( z_0^{(q)} \right) = f \left( z_0^{(q')} \right) = C(\ell,j_1(q)) ,\]
for some $\ell \in \{1,\ldots,n\}$.  By inductive hypothesis \ref{i sync 1}, $\omega(v') = b(i_0,\cdot)$.  Now $j_1(q) = j_1(q') > 2$ by assumption, which implies that
\begin{align*}
f \left( z_1^{(q)} \right) = f \left( z_1^{(q')} \right) &= C(\ell,j_1(q) - 1) \\
f \left( z_2^{(q)} \right) = f \left( z_2^{(q')} \right) &= C(\ell,j_1(q) - 2) \\
f \left( z_3^{(q)} \right) = f \left( z_3^{(q')} \right) &= C(\ell,j_1(q) - 3) .
\end{align*}
From the definition of Ingram's graph-word $\chi$, we see that $\omega^+ \left( z_2^{(q')} \right) = b(1,1-\frac{i_0}{n})$, and $\omega^+ \left( z_2^{(q)} \right) = b(1,1-\frac{i}{n})$.  Therefore, by property \ref{C3} of $f$, we conclude that $|\frac{i}{n} - \frac{i_0}{n}| < \delta < \frac{1}{n}$, hence $i = i_0$, as desired.

\medskip
\textbf{Case 2 (for \ref{i sync 1}):} $v \in \mathsf{V}(G - q)$.

By the same reasoning as in Case 1, there exists an edge $q' = \{u',v'\} \in \mathsf{V}(Q_0) \smallsetminus \{q\}$ such that $f(v') = f(v)$.  By inductive hypothesis \ref{i sync 1}, $\omega(v') = b(i_0,\cdot)$.  Since $v,v' \in \mathsf{V}(G - q)$, by inductive hypothesis \ref{i sync 2}, we have that $\omega(v) = b(i_0,\cdot)$ as well, i.e.\ $i = i_0$.

\medskip
We now prove \ref{i sync 2}.  Let $q_1 = \{u_1,v_1\}$ be an edge in $G$ for which $G - q_1$ is connected, where $\omega(u_1) = o$, and $\omega(v_1) = b(i_1,\cdot)$ for some $i_1 \in \{0,\ldots,n\}$ (so $u_1 = z_0^{(q_1)}$ and $v_1 = z_{2n+2}^{(q_1)}$).  Let $q_2 = \{u_2,v_2\} \in \mathsf{V}(Q) \smallsetminus \{q_1\}$, where $\omega(u_2) = o$, and $\omega(v_2) = b(i_2,\cdot)$ for some $i_2 \in \{0,\ldots,n\}$ (so $u_2 = z_0^{(q_2)}$ and $v_2 = z_{2n+2}^{(q_2)}$), and suppose that $f \left( z_{2n+2}^{(q_1)} \right) = f \left( z_{2n+2}^{(q_2)} \right)$, i.e.\ that $f(v_1) = f(v_2)$.  By induction, it suffices show that $i_1 = i_2$.  Clearly, we may assume $v_1 \neq v_2$.  We again consider the same two cases as above.

\medskip
\textbf{Case 1 (for \ref{i sync 2}):} $u_1 \in \mathsf{V}(G - q_1)$.

Because $G - q_1$ is connected and both $f(u_1)$ and $f(v_1) = f(v_2)$ belong to the image of $\mathsf{V}(G - q_1)$ under $f$, we see from Proposition~\ref{prop:complex sync}\ref{complex sync 1} that either there exists an edge $q' = \{u',v'\} \in \mathsf{V}(Q) \smallsetminus \{q_1\}$, where $u',v' \in \mathsf{V}(G - q_1)$, such that $f(u') = f(u_1)$ and $f(v') = f(v_1)$, or, in the case that $\mathcal{Z}_{q_1}$ goes across the branch, there exist two edges (possibly the same) $q' = \{u',v'\}, q'' = \{u'',v''\} \in \mathsf{V}(Q_0) \smallsetminus \{q_1\}$, where $u',u'',v',v'' \in \mathsf{V}(G - q_1)$, such that $f(u'') = f(u_1)$ and $f(v') = f(v_1)$.  In either case, by inductive hypothesis \ref{i sync 2} we have that $\omega(v') = b(i_2,\cdot)$.  In the latter case, if additionally $j_1(q_0) > 2$ for each $q_0 \in \mathsf{V}(Q_0)$, then $i_1 = i_2 = i_0$ by \ref{i sync 1}.  In the remaining cases, we have that
\begin{align*}
f \left( z_{2n+1}^{(q_1)} \right) &= f \left( z_{2n+1}^{(q')} \right) \\
f \left( z_{2n}^{(q_1)} \right) &= f \left( z_{2n}^{(q')} \right) \\
f \left( z_{2n-1}^{(q_1)} \right) &= f \left( z_{2n-1}^{(q')} \right) ,
\end{align*}
and these three are consecutive vertices, in the same leg of $C$ as $f(v_1)$.  From the definition of Ingram's graph-word $\chi$, we see that $\omega^+ \left( z_{2n}^{(q')} \right) = b(n,1-\frac{i_2}{n})$ and $\omega^+ \left( z_{2n}^{(q)} \right) = b(n,1-\frac{i_1}{n})$.  Therefore, by property \ref{C3} of $f$, we conclude that $|\frac{i_2}{n} - \frac{i_1}{n}| < \delta < \frac{1}{n}$, hence $i_1 = i_2$.

\medskip
\textbf{Case 2 (for \ref{i sync 2}):} $v \in \mathsf{V}(G - q_1)$.

In this case, it follows immediately from inductive hypothesis \ref{i sync 2} that $i_1 = i_2$, since $v_1,v_2 \in \mathsf{V}(G - q_1)$.
\end{proof}

\begin{lem}
\label{lem:no comb cover}
Let $0 < \delta < \frac{1}{n}$.  There does not exist a $\delta$-combinatorial $n$-od cover for $\chi$.
\end{lem}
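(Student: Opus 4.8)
The plan is to argue by contradiction. Suppose $f$ is a $\delta$-combinatorial $n$-od cover for $\chi$, and let me try to violate \ref{C3}. The driving idea is that two distinct legs of Ingram's graph-word carry the same color $b(\gamma,\cdot)$ at corresponding even positions but with $t$-coordinates $1-\tfrac in$ and $1-\tfrac jn$ differing by at least $\tfrac1n>\delta$; so if I can force these two vertices to have a common $f$-image at a non-branch vertex of $C$, flanked on one leg by two consecutive vertices with distinct images, then \ref{C3} fails. Concretely, the target is: two legs $i\neq j$ and an \emph{even} position $p$ of color $\gamma\neq 0$ with $f(z_p^{(i)})=f(z_p^{(j)})\neq C(\cdot,0)$ and $f(z_{p-1}^{(j)})\neq f(z_{p+1}^{(j)})$; then \ref{C3} applied with $v_2=z_p^{(j)}$, $v=z_p^{(i)}$ (ordered so the $t$'s increase), $v_1=z_{p-1}^{(j)}$, $v_3=z_{p+1}^{(j)}$, gives $\tfrac1n\le|i-j|/n<\delta$, a contradiction.

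First I would record the global structure: each of the $n+1$ legs is a wrapping pattern, and all share the branch vertex $o_\chi=G_\chi(\cdot,0)$ as their common initial vertex $z_0$. Since $f(z_0^{(i)})$ is then common to all legs, they form a wrapping complex over a (complete) index graph, so Proposition~\ref{prop:complex sync} applies; in particular \ref{complex sync 1} says that whenever two legs have equal $f$-images at a non-branch vertex, they do so at the \emph{same} position. To locate the desired pair I split on $c_0:=f(o_\chi)$ and on the directions in which legs leave $c_0$ (governed by Proposition~\ref{prop:L-covered}). If $c_0=C(\cdot,0)$, each leg leaves along one of the $n$ legs of $C$; with $n+1$ legs, two of them leave along the same leg, and by Proposition~\ref{prop:L-covered}\ref{L away} their images coincide everywhere, so $p=2$ (color $1$) works. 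If $c_0=C(\ell_0,j_0)$ with $j_0\ge1$, each leg leaves \emph{away} from or \emph{toward} the branch; two legs leaving the same way are identical as far as they travel together (two ``away'' legs throughout, two ``toward'' legs at least until the branch). Whenever this common travel reaches position $2$ at a non-branch vertex — automatic for ``away'' legs, and for ``toward'' legs when $j_0\ge3$ — I again take $p=2$.

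The main obstacle, and the only remaining case, is $c_0=C(\ell_0,j_0)$ with $j_0\in\{1,2\}$ and at most one leg leaving away from the branch. Here the synchronized initial segment is too short to reach a nonzero-color even position before the branch, so I must cross the branch and re-synchronize on the far side. At least $n$ legs go toward the branch, and by Proposition~\ref{prop:L-covered}\ref{L across} each crosses to a leg of $C$ \emph{different} from $\ell_0$; since there are only $n-1$ such legs, two legs $i,j$ cross to a common leg $\ell'$. Both reach $C(\ell',1)$, and since $C(\ell',1)\neq C(\cdot,0)$, Proposition~\ref{prop:complex sync}\ref{complex sync 1} forces them to reach it at the same position, hence to share the same exit parameter $j_2$ and to be identical on the entire far segment. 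The delicate point is ensuring this segment is long enough: the branch-star trajectory of a crossing leg visits at most $n$ distinct legs, so $q\le n-1$, and because $k=2n+2$ this yields $j_1+j_2=2n+4-2q\ge6$, whence $j_2\ge 6-j_0\ge4$. Therefore positions $2n-1,2n,2n+1$ all lie on the synchronized far segment, at levels $j_2-3,j_2-2,j_2-1\ge1$ of $\ell'$, and I take $p=2n$ (color $n\neq0$). In every case \ref{C3} is contradicted, so no $\delta$-combinatorial $n$-od cover for $\chi$ exists. I expect the branch-crossing case to be the crux: it is where the mismatch between the $n+1$ ``colors'' of $G_\chi$ and the $n$ legs of $C$ is actually exploited, via the pigeonhole on the $n-1$ available target legs together with the length bound $j_2\ge4$.
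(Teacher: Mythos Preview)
Your argument is correct, but it takes a genuinely different route from the paper's proof. The paper does not work directly with $\chi$ at all: it observes that $G_\chi,\chi$ is itself the $\chi$-expansion of the elementary $(n+1)$-od $S$ (one branch vertex with $\iota(S(\cdot,0))=o$ and $n+1$ endpoints with $\iota(S(i,1))=b(i,1)$), then invokes Lemma~\ref{lem:Ingram expansion condition} and Proposition~\ref{prop:expansion cover} to transfer a hypothetical cover for $\chi$ down to a cover for $\iota$; the latter is impossible by a one-line pigeonhole on \ref{C1}--\ref{C2}. Thus the paper's proof is essentially a two-line corollary of machinery it has already built for the main theorem.

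Your approach instead locates an explicit \ref{C3} violation via a direct case analysis on $c_0=f(G_\chi(\cdot,0))$, using Proposition~\ref{prop:L-covered} and Proposition~\ref{prop:complex sync} but bypassing the expansion framework entirely. The pigeonhole step (forcing two legs of $G_\chi$ onto the same leg of $C$, either immediately or after crossing the branch) and the length bound $j_2\ge 6-j_0\ge 4$ from the branch-star trajectory constraint $q\le n-1$ are both sound, and your choice of $p=2$ or $p=2n$ indeed produces consecutive vertices with $f(v_1)\neq f(v_3)$, $f(v_2)\neq C(\cdot,0)$, and a $t$-gap of at least $\tfrac{1}{n}>\delta$. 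What your approach buys is a self-contained argument for this lemma that does not depend on Proposition~\ref{prop:expansion cover} or Lemma~\ref{lem:Ingram expansion condition}; what the paper's approach buys is brevity and reuse of exactly the tools needed later in the proof of Theorem~\ref{thm:Ingram ex}, where the same reduction is iterated.
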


\begin{proof}
Let $S$ be the simple $(n+1)$-od graph whose vertex set consists of one branch vertex $S(\cdot,0)$ and $n+1$ endpoints $S(0,1),\ldots,S(n,1)$, each connected to $S(\cdot,0)$ by an edge.  Define the placement function $\iota \colon \mathsf{V}(S) \to \Gamma$ by $\iota(S(\cdot,0)) = o$, and $\iota(S(i,1)) = b(i,1)$ for each $i \in \{0,\ldots,n\}$.  Notice that there is no $\delta$-combinatorial $n$-od cover for $\iota$ by \ref{C1}, \ref{C2}, and the pigeonhole principle.

Observe that $G_\chi,\chi$ itself can be viewed as the $\chi$-expansion of $S,\iota$.  Therefore, by Proposition~\ref{prop:expansion cover} and Lemma~\ref{lem:Ingram expansion condition}, since $\iota$ has no $\delta$-combinatorial $n$-od cover, $\chi$ cannot have one either.
\end{proof}

We are now ready to proceed with the proof of Theorem~\ref{thm:Ingram ex}.

\begin{proof}[Proof of Theorem~\ref{thm:Ingram ex}]
Let $0 < \delta < \frac{1}{n}$ be sufficiently small as required in Proposition~\ref{prop:cover to comb cover}, and let $\varepsilon > 0$ be such that $2\varepsilon < \delta$.  To produce the continuum $X$, we recursively construct the following, for $N = 0,1,\ldots$:
\begin{itemize}
\item embeddings $\Omega_N \colon G_\chi \to \mathbb{R}^2$;
\item maps $\pi_N \colon T_{N+1} \to T_N$, where $T_N = \Omega_N(G_\chi)$;
\item real numbers $\varepsilon_N > 0$; and
\item simple $(n+1)$-od covers $\mathcal{U}_N$ of $T_N$ of mesh less than $\varepsilon_N$,
\end{itemize}
satisfying the following properties:
\begin{enumerate}[label=(\roman{*})]
\item $T_N$ is a $\Gamma$-space, with $\Gamma$-marking $\mathfrak{m}_N \colon \Gamma \to T_N$;
\item for each $i \in \{0,\ldots,n\}$ and $t \in [0,1]$, $\pi_N(\mathfrak{m}_{N+1}(b(i,t))) = \mathfrak{m}_N(b(0,t))$;
\item $\Omega_{N+1}$ is a $\langle \chi,T_N,\varepsilon_N \rangle$-embedding with corresponding projection $\pi_N$;
\item the elements of $\mathcal{U}_N$ are open subsets of $\mathbb{R}^2$, and $\overline{\bigcup \mathcal{U}_{N+1}} \subseteq \bigcup \mathcal{U}_N$; and
\item $\sum_{N=0}^\infty \varepsilon_N \leq \varepsilon$.
\end{enumerate}

To begin, let $T_0$ be the $\Gamma$-space defined earlier in this paper, and let $\mathfrak{m}_0 = \mathfrak{m}_{T_0}$.  Let $\Omega_0 \colon G_\chi \to T_0$ be a homeomorphism, let $\varepsilon_0 = \frac{\varepsilon}{2}$, and let $\mathcal{U}_0$ be a simple $(n+1)$-od cover of $T_0$ of mesh less than $1$.

Now, given $N \geq 0$, suppose that $\Omega_N$, $\varepsilon_N$, and $\mathcal{U}_N$ have been constructed.  It is easy to see from the definition of Ingram's graph-word $\chi$ that for any $\varepsilon > 0$, there exists a $\langle \chi,T_0,\varepsilon \rangle$-embedding $\Omega'$ of $G_\chi$ with projection $\pi' \colon \Omega'(G_\chi) \to T_0$ such that $\Omega'(G_\chi)$ is a $\Gamma$-space with $\Gamma$-marking $\mathfrak{m}'$, and for each $i \in \{0,\ldots,n\}$ and all $t \in [0,1]$, $\pi' \circ \mathfrak{m}'(b(i,t)) = b(0,t)$.  See Figure~\ref{fig:Ingram ex} for an illustration in the case $n = 3$.  Therefore, by Proposition~\ref{prop:embed G}, there exists a $\langle \chi,T_N,\varepsilon_N \rangle$-embedding $\Omega_{N+1}$ of $G_\chi$ with projection $\pi_N \colon T_{N+1} = \Omega_{N+1}(G_\chi) \to T_N$ such that $T_{N+1}$ is a $\Gamma$-space with $\Gamma$-marking $\mathfrak{m}_{N+1}$, and for each $i \in \{0,\ldots,n\}$ and all $t \in [0,1]$, $\pi_N \circ \mathfrak{m}_{N+1}(b(i,t)) = \mathfrak{m}_N(b(0,t))$, and such that $T_{N+1}$ is contained in $\bigcup \mathcal{U}_N$.

\begin{figure}
\begin{center}
\includegraphics{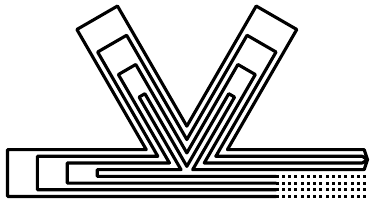}
\end{center}

\caption{Illustration of the second approximation to the generalized Ingram example (in the case $n = 3$); that is, an embedding of $G_\chi$ close to $T_0$.  The set of points of the form $\mathfrak{m}_1(b(i,t))$ is represented by the dotted lines.}
\label{fig:Ingram ex}
\end{figure}

Next, choose $\varepsilon_{N+1} > 0$ with $\varepsilon_{N+1} < 2^{-(N+2)} \varepsilon$ and let $\mathcal{U}_{N+1}$ be a simple $(n+1)$-od cover of $T_{N+1}$ such that $\overline{\bigcup \mathcal{U}_{N+1}} \subset \bigcup \mathcal{U}_N$.  This completes the recursive construction.

Let $X$ be the Hausdorff limit of the sequence $T_N$, $N = 0,1,\ldots$, in $\mathbb{R}^2$.  Equivalently, $X$ is the intersection $X = \bigcap_{N=0}^\infty \overline{\bigcup \mathcal{U}_N}$.  Moreover, we may carry out the above construction in such a way that the numbers $\varepsilon_N$ converge to $0$ fast enough so that $X$ is homeomorphic to the inverse limit $\varprojlim \langle T_N,\pi_N \rangle$ (cf.\ the Anderson-Choquet Embedding Theorem \cite{anderson-choquet1959}).  We immediately see that $X$ is a simple $(n+1)$-od-like continuum.

Observe that, by definition of Ingram's graph-word $\chi$, for each leg of $T_{N+2}$, its image under $\pi_{N+1}$ contains leg $0$ of $T_{N+1}$, and the image of leg $0$ of $T_{N+1}$ under $\pi_N$ is all of $T_N$.  Also, for each edge $\{u,v\}$ in $G_\chi$, $\pi_N(\Omega_{N+1}(uv))$ is equal to a leg in $T_N$.  Putting these observations together, it follows that if $Y \subset X$ is a proper subcontinuum of $X$, then, representing $X$ as the inverse limit $X = \varprojlim \langle T_N,\pi_N \rangle$ and letting $\sigma_N \colon X \to T_N$ denote the $N$-th coordinate projection of $X$ to $T_N$ for $N = 0,1,\ldots$, we have that for all but finitely many $N$, $\sigma_N(Y)$ is contained in $\Omega_N(uv)$ for some edge $\{u,v\}$ in $G_\chi$.  Since $\pi_N$ is one-to-one on each arc of the form $\Omega_{N+1}(uv)$, where $\{u,v\}$ is an edge of $G_\chi$, it follows that $Y = \varprojlim \langle \sigma_N(Y),\pi_N \rangle$ is an arc.  Moreover, $X$ cannot be the union of two proper subcontinua $Y_1,Y_2 \subset X$, since for large enough $N$, $\sigma_N(Y_1) \cup \sigma_N(Y_2)$ is a proper subset of $T_N$.  Thus, $X$ is indecomposable.

Finally, we prove that $X$ is not a simple $n$-od-like continuum.  Suppose, for a contradiction, that there is a simple $n$-od cover $\mathcal{V}$ of $X$ of mesh less than $\delta - 2\varepsilon$, where each element of $\mathcal{V}$ is an open subset of $\mathbb{R}^2$.  Then for some $N \geq 0$, $T_N \subset \bigcup \mathcal{V}$.  Define $G_N^{(N)} = G_\chi$, $\omega_N^{(N)} = \chi$.  Recursively define $G_N^{(M)}$, $\omega_N^{(M)}$, for $M \in \{1,\ldots,N-1\}$, to be the $\chi$-expansion of $G_N^{(M+1)},\omega_N^{(M+1)}$ (using Proposition~\ref{prop:expansion embedding} to choose the placement of vertices added to $G_N^{(M+1)}$ to get $G_N^{(M)}$).  In the end, by Proposition~\ref{prop:expansion embedding} applied repeatedly, $\Omega_N$ is a $\langle \omega_N^{(1)},T_0,\sum_{M=1}^N \varepsilon_M \rangle$-embedding of $G_N^{(1)}$.  Since $\sum_{N=1}^\infty \varepsilon_N \leq \varepsilon$, we see that $\Omega_N$ is a $\langle \omega_N^{(1)},T_0,\varepsilon \rangle$-embedding of $G_N^{(1)}$.

Now, by Proposition~\ref{prop:cover to comb cover}, there exists a $\delta$-combinatorial $n$-od cover of $\omega_N^{(1)}$.  By applying Proposition~\ref{prop:expansion cover} and Lemma~\ref{lem:Ingram expansion condition} repeatedly, we obtain a $\delta$-combinatorial $n$-od cover of $\omega_N^{(M)}$ for each $M \in \{1,\ldots,N\}$; in particular, for $\omega_N^{(N)} = \chi$.  However, this contradicts Lemma~\ref{lem:no comb cover}.  Therefore, $X$ is not a simple $n$-od-like continuum.
\end{proof}

\section{Discussion and questions}
\label{sec:questions}

One might expect that the methods in this paper could be adapted to answer the special case of \cite[Question 8]{lewis2007} pertaining to trees.  We leave this as an open problem:

\begin{question}
\label{ques:T2-like not T1-like}
Let $T_1$ and $T_2$ be trees such that $T_1$ is a monotone image of $T_2$, but not vice versa.  Does there exist an indecomposable $T_2$-like arc continuum in the plane which is not $T_1$-like?
\end{question}

Towards this end, it would be interesting to develop a more general notion of a combinatorial $T$-cover, for $T$ a tree.

The example of Theorem~\ref{thm:Ingram ex} above is the inverse limit of a simplicial inverse system.  This is essentially because, for a fixed $n \geq 2$, only finitely many of the symbols from $\Gamma$ are used in the placement function $\chi$ of that example.  Consequently, according to \cite{minc1994}, this example must have positive span.  Recall that a continuum $X$ has \emph{span zero} if for any continuum $C$ and any maps $f,g \colon C \to X$ with $f(C) = g(C)$, there exists a point $t \in C$ such that $f(t) = g(t)$; we say $X$ has \emph{positive span} if it does not have span zero.  Though our example from Theorem~\ref{thm:Ingram ex} has positive span, constructions within the framework developed in this paper may produce continua with span zero.  In a forthcoming paper, we will exhibit a family of continua with the same properties as in Theorem~\ref{thm:Ingram ex} above, but which also have span zero.

One may also ask a variant of Question~\ref{ques:T2-like not T1-like} above for span zero continua:
\begin{question}
Let $T_1$ and $T_2$ be as in Question~\ref{ques:T2-like not T1-like}.  Does there exist an $T_2$-like (plane) continuum with span zero which is not $T_1$-like?
\end{question}

\begin{question}
Does there exist a tree-like (plane) continuum with span zero which is not $T$-like for any tree $T$?
\end{question}

\bibliographystyle{amsplain}
\bibliography{Covers}

\end{document}